\documentclass[reqno,12p]{amsart}
\usepackage[T1]{fontenc}

\usepackage{geometry}
\usepackage{mathtools,amssymb,amsthm,mathrsfs,color,lineno,paralist,graphicx,float}

\def\black{\color{black}}
\def\tP{\widetilde P}
\definecolor{orange}{rgb}{1,0.5,0}

\definecolor{green}{rgb}{0.5,0.7,0.3}

\usepackage[colorlinks,
linkcolor=red,
anchorcolor=green,
citecolor=blue,
]{hyperref}
\usepackage{amsmath}
\usepackage{pdfpages}
\usepackage{amstext}
\usepackage{stmaryrd}
\usepackage{bbm}
\usepackage{comment}
\usepackage{faktor,url}
\usepackage{xfrac}
\usepackage{mathabx}
\usepackage{enumitem}

\usepackage[normalem]{ulem}

\numberwithin{equation}{section}
\numberwithin{figure}{section}
\theoremstyle{plain}
\newtheorem{thm}
{\protect\theoremname}[section]
  \theoremstyle{definition}
  \newtheorem{defn}[thm]{\protect\definitionname}
  \theoremstyle{remark}
  \newtheorem{rem}{\protect\remarkname}
  \theoremstyle{plain}
  \newtheorem{lem}[thm]{\protect\lemmaname}
  \theoremstyle{plain}
  \newtheorem{prop}[thm]{\protect\propositionname}
  \theoremstyle{plain}

\newtheorem{thmx}{Theorem}

\newtheorem{thmy}{Theorem}

  \providecommand{\definitionname}{Definition}
  \providecommand{\lemmaname}{Lemma}
  \providecommand{\propositionname}{Proposition}
  \providecommand{\remarkname}{Remark}
\providecommand{\theoremname}{Theorem}
\providecommand{\corollaryname}{Corollary}

\newcommand{\bs}{\boldsymbol}

\title[KAM for  weakly interacting particles I: full-dimensional tori]
{Kolmogorov invariant torus theorem for  weakly interacting particles I: Full dimensional tori}

\author[D. Dolgopyat]{D. Dolgopyat}
\address[DD]{Department of Mathematics, University of Maryland, 4176 Campus Drive, 20782, MD, USA}
\email{dolgop@umd.edu}

\author[B.Fayad]{B. Fayad}
\address[BF]{Department of Mathematics, University of Maryland, 4176 Campus Drive, 20782, MD, USA}
\email{bassam@umd.edu}

\author[J. Paradela]{J. Paradela}
\address[JP]{ Department of Mathematics, University of Maryland, 4176 Campus Drive, 20782, MD, USA}
\email{paradela@umd.edu}

\begin{document}

\begin{abstract} 
We develop an abstract KAM theorem for systems of infinitely many interacting particles with decaying masses and all-to-all interactions. Using this framework, we construct full-dimensional KAM tori for infinite-dimensional mechanical systems exhibiting long range interactions. 
\end{abstract}

\maketitle
\tableofcontents

\section{Introduction}\label{sec:intro}

The stability of motion in finite-dimensional systems of particles interacting {\it via} a small potential has been extensively studied since the advent of Kolmogorov-Arnold-Moser (KAM) theory (see, for example, \cite{AKN}). 
For infinite-dimensional Hamiltonian systems, a work of remarkable interest is \cite{PoschelSpatialstructure} (see also \cite{Frohlichlocalization}), in which Pöschel develops an abstract functional framework to construct full-dimensional invariant tori for systems of the form
\begin{equation}\label{eq:latticesystem}
H(\theta,I)=\langle \omega,I\rangle+\varepsilon P(\theta,I),
\qquad (\theta,I)\in (\mathbb{T}\times\mathbb{R})^\Lambda,
\end{equation}
where $\Lambda$ is a lattice (of arbitrary dimension), $\langle \omega,I\rangle=\sum_{i\in\Lambda} \omega_i I_i$, and the perturbation $P$ possesses a property called ``spatial structure''. This property concerns the \textit{spatial distribution} in $\Lambda$ of the support of the monomials in $P$, together with a \textit{weight function} that assigns a weight to each monomial depending on the size and location of its support. The crucial feature of the spatial structure property is that it is preserved under the Poisson bracket operator and that functions with spatial structure generate vector fields that decay sufficiently fast.

In particular, P\"oschel's theory does not encompass long range interactions (in the sense of Definition \ref{defn:longrange} below). The main goal of this work is to develop an abstract KAM theorem in infinite dimensions in the context of long range interactions. 
 The primary application of our theory is the construction of invariant tori for mechanical systems consisting of infinitely many interacting particles with decaying masses and all-to-all interactions. As we will show below, such systems cannot be analyzed using the techniques of \cite{PoschelSpatialstructure} and therefore require the incorporation of new ideas.

The present work forms part of a series of two papers. Here we deal with the existence of full-dimensional tori for the class of systems described below. In the second part \cite{part2}, under suitable assumptions, we extend our results to normally-elliptic tori and show that these are accumulated by full-dimensional ones.

\subsection{Main results: informal statements}\label{sec:informal}
In this section we present a non-technical statement of our main result. It is  formulated for an abstract normal form, allowing for an easier comparison with \eqref{eq:latticesystem} and providing the reader with a clearer perspective on how our results relate to those previously obtained in the literature (see Section \ref{sec:comparison}, where we also include a comparison with a specific PDE setting). A more precise formulations of this abstract theorem is given in Section \ref{sec:mainabstract}.

\subsection*{Long range perturbations}  We denote by  $\ell_\infty$ the space of complex valued sequences with bounded supremum norm and write $\ell_\infty^\mathbb R$ when considering real sequences. Fix any value of $\kappa>0$ and let \begin{equation}\label{eq:spacedecayingmasses}
\ell^\mathbb R_{\infty,\kappa}=\{\boldsymbol m=\{m_i\}_{i\in\mathbb N}\subset (0,\infty) \colon \sup_{i\in\mathbb N} m_i\ \exp(\kappa i)<\infty\}.
\end{equation}
We may, and will assume, that $m_i$ are non-increasing. The Hamiltonians introduced below are defined on an infinite-dimensional phase space $\mathcal T^\infty\times\ell_{\infty,\bs m}$ defined as follows \black. On one hand, denote
\[
\mathcal T^\infty=\{\theta\in\ell_\infty\colon \theta\sim\theta'\text{ if }\theta-\theta'\in\mathbb Z^\mathbb N\}.
\] 
It is a holomorphic Banach manifold (see Lemma \ref{lem:banachmanifold}).
On the other hand, given $\bs m\in \ell_{\infty,\kappa}^\mathbb R$, the natural range of actions in which we will be interested\footnote{ We will see below how this scaling appears naturally when considering finite energy couplings of infinitely many finite dimensional Hamiltonians.} correspond to sequences in the Banach space\black
\[
\ell_{\infty,\bs m}=\{I\in\ell_\infty
\colon |I|_{\infty,\bs m}:=\sup_{i\in\mathbb N} m_i^{-1} |I_i|<\infty\}.
\]
Then, given any two real constants $\sigma,\rho>0$ we define the bounded subsets
\begin{equation}
\label{ComplexNbhd}
\mathcal T^\infty_\sigma=\{\theta\in\mathcal T^\infty\colon |\operatorname{Im}\theta|_\infty\leq \sigma\}\qquad\qquad B_{\bs m,\rho}=\{I\in\ell_{\infty,\bs m} \colon |I|_{\infty,\bs m}\leq \rho\}.
\end{equation}
Having introduced this functional setting, we can now give a precise meaning to the concept of long range interaction.

\begin{defn}[Long range interactions with uniformly bounded potentials]\label{defn:longrange}
    Given a real-analytic function $P:\mathcal T^\infty_\sigma\times B_{\bs m,\rho}\to \mathbb C$ we say it is of \textit{long range} if it is of the form 
      \begin{equation}\label{eq:shortrange}
P(\theta,I)=\sum_{i<j} m_im_j P_{i,j}(\theta_i,\theta_j,I_i,I_j)
\end{equation}
 with  $P_{i,j}$ real-analytic and uniformly bounded
 
 \[
 \sup\{|P_{i,j}(\theta_i,\theta_j,I_i,I_j)|\colon |
 \operatorname{Im}\theta_\star|\leq\sigma,\  |m_\star^{-1} I_\star|\leq \rho,\ \text{for }\star=i,j \}\leq 1.
 \]
\end{defn}
Indeed, if $P$ is of long range we notice from Hamilton's equations that the vector field induced by a Hamiltonian of the form 
$H_\xi(\varphi,J)=\langle \xi,J\rangle +\varepsilon P$  (with respect to the canonical symplectic form $\sum_{i} d \theta_i\wedge dI_i$ on $\mathcal T^\infty\times\ell_{\infty,\bs m}$) reads
\begin{align*}
\dot \theta_i-\xi_i=&\partial_{I_i} P=\varepsilon\sum_{j\neq i} m_im_j \partial_{I_i}P_{i,j}=O(\varepsilon)\\
\dot I_i=&-\partial_{\theta_i} P=-\varepsilon \sum_{j\neq i} m_im_j \partial_{\theta_i}P_{i,j}=O(\varepsilon m_i).
\end{align*}
In particular, in the scaled variables $\widetilde I\in\ell_\infty$ defined by $I=\bs m \widetilde I$, the vector field does not decay as $i\to \infty$. Moreover, and what is more important, for frequencies $\xi\in \ell_\infty$, the ratio 
\begin{equation}\label{eq:freqpertratio}
\frac{(\text{peturbation})_i}{(\text{unperturbed frequency})_i}\simeq O(\varepsilon)
\end{equation}
as $i\to \infty$. Thus for fixed $\varepsilon$, the ratio between the time scale of the unperturbed motion and the time scale of the dynamics induced by the vector field associated to the perturbation $P$ does not necessarily decay as $i\to \infty$.

\begin{rem}
The fact that \eqref{eq:shortrange} only includes binary interactions is not important. Indeed, our abstract theorems in Section \ref{sec:mainabstract} hold for a broader class of perturbations which we introduce in Section \ref{sec:definitions}.

A more thorough and general discussion on long range perturbations and the  the challenges that one faces when developing a KAM theory in that setting, can be found in Section \ref{sec:comparison}.  
\end{rem}

We are now ready to present a non-technical version of our main result. It concerns the persistence of infinite-dimensional invariant tori for perturbations of linear integrable Hamiltonians with external parameters. A more general and precise formulation of this theorem is provided in Section \ref{sec:mainabstract}. When comparing the informal statement in this section with the rigorous one, the reader should keep in mind that the latter is formulated in the naturally scaled phase space, where the domains of analyticity are uniform in $i \in \mathbb{N}$ (see Section \ref{sec:mainabstract}).

 Let $a<b$ be two fixed real numbers and fix any positive constants $\sigma,\rho>0$. Let 
  $\xi\in [a,b]^{\mathbb N}$,  and for small $\varepsilon>0$ consider a real-analytic Hamiltonian $H_\xi:\mathcal T^\infty_\sigma\times B_{\bs m,\rho}\to \mathbb C$ of the form 
\begin{equation}\label{eq:fulldimintrolag0}
H_\xi(\varphi,J)=\langle \xi,J\rangle+\varepsilon P(\theta,I)
\end{equation}
with $P$ as in \eqref{eq:shortrange}. The following is an informal presentation of our  main result.
\begin{thmy}[Full-dimensional KAM tori]\label{thm:introfull}
Let $\bs m\in \ell^\mathbb R_{\infty,\kappa}$ for some $\kappa>0$, and fix any $\sigma,\rho>0$.  
Let 
$$H_\xi:\mathcal T_\sigma^\infty\times B_{\bs m,\rho}\to \mathbb C$$ be as in \eqref{eq:fulldimintrolag0}.  
Then there exists $\varepsilon_0(\kappa,\rho,\sigma,|b-a|)>0$ such that, for any $0\leq \varepsilon\leq \varepsilon_0$, there exist uncountably many $\xi\in [a,b]^\mathbb N$ for which the corresponding Hamiltonian $H_\xi$ possesses a real-analytic\footnote{The precise meaning of real-analytic in this context will be clarified later. For now, it suffices to note that the torus is given by an embedding 
\[
\Phi:\mathcal T_\sigma^\infty \to \mathcal T_\sigma^\infty\times B_{\bs m,\rho}
\]
such that, for any bounded $A\subset \mathbb N$, the dependence $\theta_A\mapsto \Phi$ is real-analytic,  with the analyticity domain in each coordinate $\theta_j$ having a width that is uniform in $j$.} 
full-dimensional invariant torus on which the flow is conjugated to a linear translation on $\mathbb T^\mathbb N$ with frequency $\xi'$, where $\xi'$ is $O_{\ell_\infty}(\varepsilon^{1/2})$-close to $\xi$.  
\end{thmy}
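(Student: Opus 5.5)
Our plan is to prove Theorem \ref{thm:introfull} by a parameter-dependent KAM iteration (Newton scheme) run in the rescaled variables $I=\bs m\widetilde I$. In these variables the analyticity domain is uniform in $i$, and the long range perturbation becomes a function $\widetilde P(\theta,\widetilde I)=\sum_{i<j}m_im_j P_{i,j}$ with the symplectic form $\sum_i m_i\,d\theta_i\wedge d\widetilde I_i$. We first fix a class of admissible perturbations, modelled on \eqref{eq:shortrange} but closed under Poisson brackets and under the operations of the scheme. We expect the natural choice to be functions written as sums over finite sets $A\subset\mathbb N$ of terms $\big(\prod_{i\in A}m_i\big)P_A(\theta_A,I_A)$. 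Each term carries a weight that is summable uniformly in each index, i.e.\ $\sup_i\sum_{A\ni i}\big(\prod_{j\in A\setminus\{i\}} m_j\big)|P_A|<\infty$. The first key step is to show that this norm controls the vector field in $\ell_\infty$ in the rescaled variables, and that it is preserved under Poisson brackets with the loss of a Cauchy factor. This is where the exponential decay $\bs m\in\ell^\mathbb R_{\infty,\kappa}$ enters: sums such as $\sum_j m_j$ converge, and the factor $m_i$ produced by $\partial_{\theta_i}$ cancels the $m_i^{-1}$ from the symplectic form.

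At each step $n$ we solve the homological equation $\{\langle\xi,J\rangle,F\}=P-[P]$ termwise. For a term supported on $A$ with Fourier mode $k\in\mathbb Z^A$, the small divisor is $\langle k,\xi\rangle$. We impose a Diophantine condition adapted to the support, of the form $|\langle k,\xi\rangle|\ge\gamma\prod_{i\in A}(1+|k_i|^{\tau})^{-1}\,\Psi(A)$. Here $\Psi(A)$ is a loss that we can absorb using the weights $\prod_{i\in A}m_i$. For instance, we take $\Psi(A)\gtrsim\prod_{i\in A}e^{-\kappa i/2}$, so that half of the exponential decay of the masses pays for the divisors. The averaged part $[P]$ (the $k=0$ modes linear in $J$) is used to correct the frequency. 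Because the problem is linear in $J$ and the external parameter $\xi$ is free, we shift $\xi\mapsto\xi+\varepsilon\partial_J[P]$. This is the standard counterterm/parameter-translation trick, and it gives $\xi'-\xi=O(\varepsilon)$ in $\ell_\infty$. We state the result as $O(\varepsilon^{1/2})$ to leave room for the choice $\gamma\sim\varepsilon^{1/2}$. The quadratic terms in $J$ are handled as in Kolmogorov's scheme: we keep them, or we iterate only on the terms of order $\le1$ in $J$, and we shrink $\rho$ geometrically. Convergence follows from the usual quadratic estimate $\varepsilon_{n+1}\le C\gamma^{-2}\delta_n^{-c}\varepsilon_n^2$ with $\sigma_n\downarrow\sigma/2$. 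The limiting conjugacy gives the embedding $\Phi$, which is analytic in each finite block $\theta_A$ with uniform width.

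The last step is measure/cardinality. We work on a fixed product measure on $[a,b]^\mathbb N$ (normalized Lebesgue in each factor). We show that the set of $\xi$ violating the condition for some $(A,k)$ has measure at most $\sum_{A,k}\gamma\,\Psi(A)\prod(1+|k_i|^\tau)^{-1}/|b-a|$. This sum is finite and small precisely because $\Psi$ decays exponentially in the indices of $A$, while the number of sets $A$ with given $\max A$ grows only like $2^{\max A}$. We then control $\sum_A\Psi(A)$ by $\prod_i(1+Ce^{-\kappa i/2})<\infty$. Since the frequency map $\xi\mapsto\xi'$ is a Lipschitz-close-to-identity homeomorphism in the product sense (constructed Whitney-style along the iteration), the preimage of the positive-measure Diophantine set has positive product measure, hence is uncountable.

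The main obstacle is the first key step, together with its compatibility with the small divisors. In P\"oschel's spatial-structure framework, the weight decays with the spatial extent of the support. Here, instead, the interaction between particles $i$ and $j$ does not decay with $|i-j|$ and only carries the factor $m_im_j$. So, after rescaling, the component $i$ of the vector field is $O(\varepsilon)$ uniformly in $i$, as in \eqref{eq:freqpertratio}. We must check that commutators do not accumulate non-summable sums over supports containing $i$, and that the divisor losses $\Psi(A)^{-1}$ are paid only by the masses of the \emph{other} sites in $A$, never by $m_i$ itself. We would first try to balance this by using only $\prod_{j\in A\setminus\{\max\}}$ type weights. If that fails, we would let the Diophantine loss depend only on $|k|$ and the size $\#A$, and use the factor $m_j$ with $j\in A\setminus\{i\}$ to absorb it.
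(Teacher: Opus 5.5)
\textbf{There is a genuine gap.} It sits exactly at the tension you flag at the end: the KAM step and your measure step need incompatible small-divisor conditions.

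\emph{Why the divisor bounds must be uniform in the top index.} Take a Fourier mode supported on $A$ with top index $j=\overline A$. The $J_j$-component of the rescaled vector field of the generating function is, schematically, $m_j^{-1}\partial_{\theta_j}(m_AP_A)/\langle\xi_A,l\rangle=m_{\underline A}\,\partial_{\theta_j}P_A/\langle\xi_A,l\rangle$. The factor $m_j$ is entirely consumed by the symplectic form, so nothing is left to pay for a loss $\Psi(A)$ that decays in $\overline A$. Already for $\sum_j m_1m_j\cos(\theta_1-\theta_j)$ you get $\dot J_j=m_1\cos(\theta_1-\theta_j)/(\xi_1-\xi_j)$. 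This is unbounded in $j$ if you only require $|\xi_1-\xi_j|\gtrsim e^{-\kappa j/2}$.

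\emph{Why this kills your measure step.} With uniform bounds, the good set has product measure zero. For fixed $\xi_1$, the events $\{|\xi_1-\xi_j|<\gamma\}$, $j\ge 2$, are independent with probability $\gtrsim\gamma/|b-a|$. By the second Borel--Cantelli lemma, almost every $\xi\in[a,b]^{\mathbb N}$ violates the condition for $A=\{1,j\}$, $l=(1,-1)$, for infinitely many $j$; this is the obstruction of Section~\ref{sec:stochlayer}. Your bound $\sum_{A,k}\gamma\Psi(A)\cdots$ is finite only because $\Psi$ decays in $\max A$, which the KAM step forbids. Your fallback (a loss depending only on $|k|$ and $\#A$) still leaves infinitely many sets of each cardinality, so the sum diverges. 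Hence ``positive product measure, hence uncountable'' cannot work. Separately, transporting positive product measure through a Lipschitz-close-to-identity map of $\ell_\infty$ is not justified in infinite dimensions. The paper avoids this by imposing the Diophantine conditions directly on the final frequency.

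\textbf{What is missing: two ingredients from the paper.}

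First, the norm $\sup_j m_j^{-1}\sum_{\overline A=j}m_{\underline A}^{-\beta}|h_A|$, with $\beta_n$ decreasing along the iteration, lets one truncate at step $n$ to the finite family $\underline A\in\mathbb A_n$ (i.e.\ $m_{\underline A}^{\mu_n}\ge\varepsilon_{n+1}/\varepsilon_n$), together with $|l|_1\le L_n$ and $|\alpha|_1\le 1$. The only infinite family of divisors is then indexed by $\overline A$. One imposes $|\langle\xi_A,l\rangle|\ge\varepsilon_n^{1/12}$, uniformly in $\overline A$.

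Second, instead of the product measure on $[a,b]^{\mathbb N}$, one uses the singular measure on a shrinking Hilbert cube $\prod_n[\tilde\xi_n-\ell n^{-1/d},\tilde\xi_n+\ell n^{-1/d}]$ around a sequence $\tilde\xi$ of box dimension $d<1$. The argument then runs as follows.
\begin{enumerate}
\item Box dimension $d<1$ lets you cover $\{\tilde\xi_j\}_{j\ge q_n}$ by $O(q_n)$ balls of radius $\sim q_n^{-1/d}$.
\item Hence the infinite union of resonant strips over the top index is contained in $O(q_n)$ strips enlarged by a factor $7$ (Lemma~\ref{lem:inclusion}).
\item Each strip has measure $\lesssim\ell^{-1}(n^2|\log\varepsilon_n|)^{1/d}\varepsilon_n^{1/12}$, obtained by varying a low index in $\underline A$.
\item Summing gives $\mu(B_n)\lesssim\ell^{-1}\varepsilon_n^{(1-d)/24}$.
\end{enumerate}
Since this measure is non-atomic, positivity of $\mu(\mathcal O_F)$ gives uncountably many frequencies.
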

\medskip

A more technical statement is given in Theorem \ref{thm:mainlagrangianprecise}. We emphasize that we do not claim the existence of an invariant torus for a set of large measure of  $\xi\in[a,b]^\mathbb N$ with respect to the product measure. As we discuss in Section \ref{sec:stochlayer}, the long range nature of the interaction in $P$ prevents such a conclusion from holding. A precise description of the \textit{singular} set of frequencies for which we construct  full-dimensional tori is provided in Theorem~\ref{thm:measestimatesgeneral}. In plain words, the set of KAM frequencies in Theorem \ref{thm:introfull} is abundant on a suitable Hilbert cube around any $\xi\in[a,b]^\mathbb N$ for which $\mathrm{dim}_{\mathrm{box}}(\{\xi_i\}_i)<1$.

\subsection{Infinite couplings of non-degenerate, finite-dimensional Hamiltonians}

A natural question which motivates this work is the existence of  finite  energy \black\footnote{For high dimensional Hamiltonians at high energy regimes, Boltzmann's ergodic hypothesis conjectures that the motion is ergodic with respect to Liouville measure.} quasiperiodic motions for  systems composed by an infinite number of particles interacting via a weak all-to-all coupling. 

This problem can be seen as a particular case of a more general one, which we now present (concrete applications to  explicit systems of interacting particles will be given in Section \ref{sec:particles}).  Namely, the construction of KAM tori for infinite couplings of finite-dimensional Hamiltonians. Of course, in order to obtain meaningful results in this rather general context, one has to make some assumptions on the way these systems are coupled. Our results below apply to the following class of infinite-dimensional Hamiltonians.

\begin{defn}\label{defn:Nbodytype1}
Let $N\in\mathbb N$ and $\bs m\in\ell^\mathbb R_{\infty,\kappa}$ for some $\kappa>0$. We say that 
\[
H(x,y):\ell_{\infty}^N\times\ell_{\infty,\bs m}^N\to\mathbb C
\]
is of \textit{class}-$\mathcal H$ if it is of the form
\begin{equation}\label{eq:firstHamnew}
H(x,y)=H_0(x,y)+\varepsilon H_1(x,y)
\end{equation}
with
\begin{equation}\label{eq:firstHamnew2}
H_0(x,y)=\sum_{i=1}^\infty  m_ih_i(x_i,y_i/m_i)
\qquad H_1(x,y)=\sum_{i<j} m_im_j V_{i,j}(x_i,x_j,y_i/m_i,y_j/m_j),
\end{equation}
and there exists $K>0$ such that, for any $i,j\in\mathbb N$,  the functions $h_i\in C^\omega((-K,K)^{N},\mathbb R)$ and 
$V_{i,j}\in C^\omega((-K,K)^{4N},\mathbb R)$ are  uniformly  bounded on some (uniform) compact complex neighbourhood of $(-K,K)^N$ or $(-K,K)^{4N}$. Here, the Hamiltonian dynamics are considered with respect to the canonical symplectic form $\sum_{i} d x_i\wedge dy_i$ on $\ell_{\infty}^N\times\ell_{\infty,\bs m}^N$, $x_i$ and $y_i$ being the position and momentum of ``particle'' $i$. The scaling of $y_i \sim m_i$ corresponds to a finite total energy regime.
\end{defn}
\medskip

Indeed, as we will see in Section \ref{sec:particles}, systems of the form \eqref{eq:firstHamnew}-\eqref{eq:firstHamnew2} appear naturally when studying systems of interacting particles with bounded energy. In that case, for all $i\in\mathbb N$, 
\[
h_i(x_i,y_i)=\frac{|y_i|^2}{2}-V(x_i)\qquad\qquad (\text{hence}\quad m_i h_i(x_i,y_i/m_i)=\frac{|y_i|^2}{2m_i}-m_iV(x) \text{ is the mechanical energy})
\]
for some background  potential $V(x)$. One may think of \eqref{eq:firstHamnew}-\eqref{eq:firstHamnew2} as a \textit{bounded energy},  infinite coupling of finite dimensional Hamiltonians. 
\medskip

\begin{rem}[Long range interactions for class-$\mathcal H$ Hamiltonians]
    Consider a class-$\mathcal H$ Hamiltonian as in Definition \ref{defn:Nbodytype1}. The equations of motion for such a system read
    \begin{align*}
    \dot x_i=&\partial_{y_i} h_i (x_i,y_i/m_i)+\varepsilon\sum_{i\neq j}m_j\partial_{y_i}V_{i,j}(x_i,x_j,y_i/m_i,y_j/m_j)=O(1)\\
    \dot y_i=&-m_i\partial_{x_i} h_i (x_i,y_i/m_i)-\varepsilon\sum_{i\neq j}m_i m_j\partial_{y_i}V_{i,j}(x_i,x_j,y_i/m_i,y_j/m_j)=O(m_i)
    \end{align*}
    In particular, in the (naturally) scaled variables $\tilde y\in\ell_\infty^N$ defined by $y=\bs m\tilde y$, the vector field does not decay as $i\to \infty$ (nor does the ratio between the time scale of the unperturbed motion and that of the dynamics induced by the coupling term).
\end{rem}

We turn now to the hypotheses we assume on the $N$ degrees-of-freedom Hamiltonians 
\[
h_i(x_i,y_i)\in\mathbb R^N\times\mathbb R^N\to \mathbb R,
\]
 in our study of a Hamiltonian $H$ as in \eqref{eq:firstHamnew}-\eqref{eq:firstHamnew2}. We will suppose that these finite dimensional Hamiltonians satisfy one of the following properties:
\begin{itemize}
\item \textbf{P1} \textit{(non-degenerate Lagrangian invariant torus):} There exist constants $\rho,\sigma,\gamma>0$, an exponent $\tau\geq N$  and a compact set $K\subset\mathbb R^N$ such that, for all $i\in\mathbb N$: there exist  a subset $M_i\simeq \mathbb T^N_\sigma\times B_\rho^N$ of the phase space, and a symplectic, real-analytic local coordinate system $(\varphi,J)$ on $M_i$ such that $h_{i}$ can be written in normal form
\begin{equation}\label{eq:normalformP1}
N_i(\varphi,J)=\langle \xi_0^i, J\rangle+\frac12\langle A_i J,J\rangle +P_i(\varphi,J),
\end{equation}
for some  $\xi_0^i\in K\subset \mathbb R^N$ in the Diophantine class $DC(\gamma,\tau)$, with $A_i$ an invertible matrix, and $P_i$ satisfying  
$\partial_{J^n}^{n}P_i|_{\{J=0\}}=0$ for all $n\leq 2$. We moreover assume that 
$\displaystyle \sup_i \lVert A_i^{-1}\rVert<\infty$. 

\item \textbf{P2} \textit{(non-degenerate elliptic equilibrium):} There exist constants $\rho,\gamma>0$ and a compact set $K\subset\mathbb R^N$ such that, for all $i\in\mathbb N$: there exists a subset $M_i\simeq  \mathbb B^{2N}_\rho$ (here $\mathbb B_\rho\ni 0$ is a complex disk in $\mathbb C$ of radius $\rho$) of the phase space and a symplectic, real-analytic local coordinate system $(z,\bar z)$ on $M_i$ on which $h_{i}$ can be written in normal form
\begin{equation} \label{eq:normalformP2}
N_{i}(z,\bar z)=\langle\omega_0^i,z\bar z\rangle 
+ \frac12 \langle A_i z\bar z,z\bar z\rangle +P_i(z,\bar z),
\end{equation}
for $\omega_0^i\in K\subset \mathbb R^{N}$, satisfying 
\[
|\omega_0^i\cdot k |\geq \gamma>0
\]
for all $k\in\mathbb Z^N$ with $0<|k|_1\leq 100$, with $A_i$ being an invertible matrix, and 
$\partial_z^{n_z}\partial_{\bar z}^{n_{\bar z}} P|_{\{z=\bar z=0\}}=0$ whenever $n_z+n_{\bar z}\leq 2$.  
We moreover assume that  
$\displaystyle \sup_i \lVert A_i^{-1}\rVert<\infty$.
\end{itemize}

In the study of coupled Hamiltonian systems, it is natural to look at regions of the phase space where each system displays a non degenerate elliptic equilibrium or quasi-periodic invariant torus.  An instance of this approach in the recent literature can be found in \cite{TuraevRomkedar} (see also \cite{MR2383597} and the
references therein), where the authors construct KAM islands around (normally-elliptic) choreography-type  periodic orbits for systems of interacting particles.  \black 

\subsection*{Full-dimensional tori}

For $\varepsilon=0$, a class-$\mathcal H$ Hamiltonian $H$ as in \eqref{eq:firstHamnew} satisfying property \textbf{P1} reduces to an uncoupled sum of infinitely many Hamiltonians, each displaying an $N$-dimensional non-degenerate real-analytic invariant torus. The infinite-dimensional Hamiltonian $H_0$ ($H$ of \eqref{eq:firstHamnew} with $\varepsilon=0$) has an infinite full-dimensional invariant torus $\mathcal T_0$ consisting of the infinite Cartesian product of these uncoupled tori. Moreover, the differential of the amplitude-frequency map for the infinite-dimensional system \eqref{eq:firstHamnew} is given by a block-diagonal matrix whose blocks are invertible. Looking at the higher order terms $P_i$ of each finite dimensional Hamiltonian  in \eqref{eq:normalformP1} as perturbative terms, we see that $H_0$ has a quasi-integrable structure around $\mathcal T_0$ given by the infinite Cartesian products of the uncoupled tori of the quadratic (in action) part of the Hamiltonians $N_i$. We refer to these tori as \textit{uncoupled tori}.

The next  result shows that, for $\varepsilon>0$ sufficiently small, many of the uncoupled tori persist as slightly deformed full-dimensional invariant tori for the Hamiltonian \eqref{eq:firstHamnew}.

\begin{thmx}[Full-dimensional KAM tori]\label{thm:mainLagrangianinformal}
Let $N\in\mathbb N$, let $\bs m\in\ell^\mathbb R_{\infty,\kappa}$ for some $\kappa>0$, and let $H:\ell_\infty^N\times\ell_{\infty,\bs m}^N\to\mathbb C$ be a Hamiltonian of class $\mathcal H$ for which the corresponding $\{h_i\}_{i\in\mathbb N}$ satisfy \textbf{P1}. Then there exists a constant $\varepsilon_0(\{h\}_i,\{V_{ij}\},\kappa)>0$ such that for any $0\leq \varepsilon\leq \varepsilon_0$ there exists an uncountable collection of real-analytic full-dimensional tori\footnote{In the neighborhood of the infinite invariant torus $\mathcal T_0$ of $H_0$.}, invariant under the flow of $H$, on which the motion is conjugated to a linear translation on $\mathbb T^{N\mathbb N}$.

\end{thmx}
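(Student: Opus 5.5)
Theorem \ref{thm:mainLagrangianinformal} should follow from the abstract result, Theorem \ref{thm:introfull} (in its precise form, Theorem \ref{thm:mainlagrangianprecise}). The work is to bring $H$ into the form \eqref{eq:fulldimintrolag0}, with a frequency vector playing the role of an external parameter.

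\textbf{Step 1: uniform normal-form coordinates.} Fix the compact domains $M_i$ and the local coordinates $(\varphi_i,J_i)$ from \textbf{P1}. Because of the scaling $y_i=m_i\tilde y_i$, the map $(x_i,y_i)\mapsto(\varphi_i,m_iJ_i)$ is symplectic for $dx_i\wedge dy_i$. In these coordinates
\[
H_0=\sum_i m_i N_i(\varphi_i,J_i/m_i),
\]
and the coupling keeps the long range form
\[
H_1=\sum_{i<j} m_im_j\widetilde V_{ij}.
\]
Here $\widetilde V_{ij}=V_{ij}\circ(\text{coordinate changes})$. These functions are uniformly bounded on $\mathbb T^N_{\sigma'}\times B_{\rho'}$ because the coordinate changes are uniform in $i$, by the uniform constants in \textbf{P1} and class $\mathcal H$. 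So, in the scaled actions $\widetilde J$, the analyticity domains are uniform in $i$. This is exactly the setting of Section \ref{sec:mainabstract}.

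\textbf{Step 2: introduce parameters.} This is the standard Pöschel device of trading actions for frequencies. Near each uncoupled torus, localize to scaled actions $\widetilde J_i=\eta_i+\widehat J_i$, where $\eta_i$ is small and $|\widehat J_i|\le r$. Expanding $N_i$ gives
\[
N_i=\text{const}+\langle \xi_0^i+A_i\eta_i,\widehat J_i\rangle+O(|\widehat J_i|^2)+P_i(\varphi_i,\eta_i+\widehat J_i).
\]
Set $\xi_i=\xi_0^i+A_i\eta_i$. Since $\sup_i\|A_i^{-1}\|<\infty$, the map $\eta\mapsto\xi$ is a bi-Lipschitz bijection of $\ell_\infty$-balls, uniformly in $i$. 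Hence $\xi$ ranges over a product of boxes $\prod_i(\xi_0^i+[-c,c]^N)$, which is the analogue of $[a,b]^{\mathbb N}$.

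The remaining terms are treated as perturbation:
\begin{itemize}
\item the quadratic term $O(r^2)$, which after rescaling time contributes $O(r)$ relative to the linear part;
\item the higher-order part $P_i$, which is $O(|\eta|^3+r^3)$ because its derivatives vanish to order $2$ at $J=0$, hence small once $|\eta|,r$ are small;
\item the coupling $\varepsilon H_1$.
\end{itemize}
The quadratic and higher-order terms are single-site, so trivially of the long range class of Section \ref{sec:definitions}; the coupling is already of that class. Choose $r\sim\varepsilon^{1/2}$ and $|\eta|\lesssim\varepsilon^{1/4}$, or simply take $|\eta|$, $r$ below a threshold independent of $\varepsilon$ and then shrink $\varepsilon_0$. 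Then all perturbative terms are $\le\varepsilon_0'$, the smallness threshold of Theorem \ref{thm:mainlagrangianprecise}.

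\textbf{Step 3: choose frequencies and conclude.} Apply Theorem \ref{thm:measestimatesgeneral} to pick admissible frequencies. I expect this to be the main obstacle, because $\xi$ must lie in the singular KAM set. That set is only guaranteed abundant near base points $\xi^*$ whose coordinates form a set of box dimension $<1$, on a Hilbert cube around $\xi^*$. The approach:
\begin{itemize}
\item Choose $\xi^*$ in the product box such that the set $\{\xi^*_i\}$ has box dimension $<1$. For example, make $\xi^*_i$ converge fast to a limit point, or take values in a finite set; this is possible since the boxes have uniform size $c$ and $\xi_0^i$ lies in the compact set $K$.
\item Check that the Hilbert cube around $\xi^*$ provided by Theorem \ref{thm:measestimatesgeneral} lies in the parameter box.
\end{itemize}
Positive measure with respect to the Hilbert cube measure gives uncountably many $\xi$. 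Note that the Diophantine property of the individual $\xi_0^i$ is not what the abstract theorem uses; the joint nonresonance is supplied by that measure estimate.

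For each such $\xi$, the abstract theorem yields an invariant embedded full-dimensional torus with linear flow. Pulling it back through the uniform symplectic coordinates of Step 1 gives real-analytic tori of $H$ near $\mathcal T_0$. Distinct $\xi'$ give distinct tori, since the rotation vectors differ, so the collection is uncountable.
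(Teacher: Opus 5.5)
There is a genuine gap in Step 2. You treat the smallness threshold of Theorem \ref{thm:mainlagrangianprecise} as an absolute constant $\varepsilon_0'$, but it is not one. The theorem requires the perturbation size $\varepsilon'$ to be small relative to the action radius $\rho$ and to the width $\ell\sim|b-a|$ of the frequency box. Roughly, you need $\varepsilon'\lesssim\min\{\rho^6\sigma^6,|b-a|^{24}\}$; see \eqref{eq:finalsmallnessconditionlag}, and note that the excluded measure in Theorem \ref{thm:measureestimates} is $O(\ell^{-1}\varepsilon'^{(1-d)/24})$.

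In your localization, the remainder $P_i(\varphi_i,\eta_i+\widehat J_i)$ contains the angle-dependent, $\widehat J$-independent piece $P_i(\varphi_i,\eta_i)$. This piece is generically of size $|\eta|^3$ and does not depend on the coupling $\varepsilon$ at all. Meanwhile your frequency box has width only $\sim|\eta|$. You would therefore need $|\eta|^3\lesssim|\eta|^{24}$, which fails for every small $\eta$. Neither of your choices helps. Taking $r\sim\varepsilon^{1/2}$, $|\eta|\sim\varepsilon^{1/4}$ gives $\varepsilon'\sim\varepsilon^{3/4}$ against a requirement like $\varepsilon^{3}$ or $\varepsilon^{6}$. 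Fixing $\eta,r$ and then shrinking $\varepsilon_0$ does not make $P_i(\varphi_i,\eta_i)$ any smaller. Likewise, the quadratic term $\tfrac12\langle A_i\widehat J_i,\widehat J_i\rangle$ cannot be counted as perturbation, since $r^2\lesssim r^6$ is false; it belongs in the $P_h$ slot of \eqref{eq:normalformLag}.

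The missing idea is precisely where the hypothesis $\xi_0^i\in DC(\gamma,\tau)$ enters, contrary to your remark that it plays no role. The abstract theorem indeed does not use it, but the reduction to the abstract normal form does.

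\begin{itemize}
\item The paper first flattens each $N_i$ around its torus by a Kolmogorov/Birkhoff normal form to high order. This is done uniformly in $i$, using the uniform $\gamma,\tau$, the compact set $K$, and the uniform analyticity widths. The result is $N_i\circ\hat\psi_i=\langle\xi_0^i,I_i\rangle+\hat h_i(I_i)+\hat P_i$ with $\partial_I^k\hat P_i|_{I=0}=0$ for $k\le 100$.
\item The angle-independent part $\hat h_i$ is used to build the frequency map, which is invertible since $\sup_i\|A_i^{-1}\|<\infty$, and to produce the term $P_h$ that vanishes to second order.
\item The angle-dependent remainder $\hat P_i$, on actions of radius $\rho_\varepsilon=\varepsilon^{1/100}$, is then $O(\varepsilon)$, the same order as the coupling.
\item With a frequency box of width $\varepsilon^{1/25}$, which fits inside the image of the frequency map since $\varepsilon^{1/25}\ll\varepsilon^{1/100}$, the condition $\varepsilon\lesssim\min\{\rho_\varepsilon^6\sigma^6,(\varepsilon^{1/25})^{24}\}$ is met.
\end{itemize}

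Once this is in place, your Step 1 scaling and your Step 3 go through essentially as you describe: choose the base point with small box dimension, take its Hilbert cube inside the box, and get uncountably many tori.
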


Theorem \ref{thm:mainLagrangianinformal} is obtained in Section \ref{sec:actionanglenormalform} as a consequence of Theorems \ref{thm:mainlagrangianprecise} and \ref{thm:measestimatesgeneral}, which, together, constitute a more precise version of Theorem \ref{thm:introfull}. 
\medskip

A more detailed description of the set of KAM  tori in Theorem \ref{thm:mainLagrangianinformal} is also given in Section \ref{sec:actionanglenormalform}. Roughly speaking, the KAM set in Theorem \ref{thm:mainLagrangianinformal} is abundant with respect to a \textit{singular measure} obtained by the following construction. Choose any initial condition  for which the unperturbed frequency $\xi\in \ell_\infty^N$, when seen as a sequence living in $\mathbb R^N$, has \textit{box dimension} smaller than one. Then, around this initial condition consider a probability measure supported on a Hilbert cube (with suitable polynomial decay in $i$ according to the box dimension of $\{\xi_i\}_{i\in\mathbb N}\subset \mathbb R^N$). It is with respect to such a measure (associated to any choice  of initial condition as above) that the KAM set in Theorem \ref{thm:mainLagrangianinformal} is abundant (see Section \ref{sec:actionanglenormalform} for the precise construction). In this perspective, Theorem \ref{thm:mainLagrangianinformal} gives a natural extension of the classical KAM-stability of a non-degenerate quasi-periodic torus 
 to the case of infinite-dimensional  non-degenerate quasi-periodic tori, with the abundance of quasi-periodic motion obtained with respect to a class of singular measures on the set of actions.

 We argue in Section \ref{sec:stochlayer} that KAM tori cannot be expected to be abundant with respect to 
 product of Lebesgue measures in our infinite dimensional setting as this would necessarily include excessively strong resonances.
\medskip

\subsection*{KAM annuli around elliptic fixed points}

For $\varepsilon=0$, a class-$\mathcal H$ Hamiltonian $H$ as in \eqref{eq:firstHamnew} satisfying property \textbf{P2} reduces to an uncoupled sum of infinitely many Hamiltonians, each displaying an elliptic non-degenerate equilibrium.  The infinite-dimensional Hamiltonian $H_0$  has then an infinite dimensional elliptic equilibrium that we denote by $\mathcal E_0$. Looking at the higher order terms $P_i$ of each finite dimensional Hamiltonian  in \eqref{eq:normalformP2} as perturbative terms, we see as above that $H_0$ has a quasi-integrable structure around $\mathcal E_0$. We again refer to these tori as \textit{uncoupled tori}. As a corollary of Theorem \ref{thm:mainLagrangianinformal} and a standard normal forms argument, our next  result shows that, for $\varepsilon>0$ sufficiently small, many of the uncoupled tori, of a small annulus around $\mathcal E_0$, persist as slightly deformed full-dimensional invariant tori for the Hamiltonian \eqref{eq:firstHamnew}.

\begin{thmx}[KAM annuli]\label{thm:mainellipticpts}
Let $N\in\mathbb N$, let $\bs m\in\ell^\mathbb R_{\infty,\kappa}$ for some $\kappa>0$, and let $H:\ell_\infty^N\times\ell_{\infty,\bs m}^N\to\mathbb C$ be a  Hamiltonian of class $\mathcal H$ for which the corresponding $\{h_i\}_{i\in\mathbb N}$ satisfy property \textbf{P2}. Then there exists a constant $\varepsilon_0(\{h_i\},\{V_{ij}\},\kappa)>0$ such that for any $0<\varepsilon\leq \varepsilon_0$ there exists an uncountable collection of full-dimensional tori, invariant under the flow of $H$, on which the motion is conjugated to a linear translation on $\mathbb T^{N\mathbb N}$. These tori are localized within a small annulus  around $\mathcal E_0$ \black in $\ell_\infty^N\times\ell_{\infty,\bs m}^N$.
\end{thmx}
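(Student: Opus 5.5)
The plan is to reduce Theorem \ref{thm:mainellipticpts} to Theorem \ref{thm:mainLagrangianinformal}, or more precisely to the abstract full-dimensional result behind it (Theorems \ref{thm:mainlagrangianprecise} and \ref{thm:measestimatesgeneral}). The idea is to perform, uniformly in $i$, a Birkhoff normal form on each block near the elliptic point, and then pass to action-angle variables on an annulus of small radius $\delta$.

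First, for each $i$, I start from the normal form \eqref{eq:normalformP2}. The non-resonance condition $|\omega_0^i\cdot k|\geq\gamma$ for $0<|k|_1\leq 100$ allows finitely many Birkhoff steps (say up to order $4$ in $(z,\bar z)$) with homological denominators bounded below by $\gamma$ uniformly in $i$. This brings $N_i$ to the form $\langle\omega_0^i,I\rangle+\frac12\langle A_iI,I\rangle+O(|z|^5)$, where $I_k=z_k\bar z_k$. The transformations are close to the identity on a slightly smaller polydisk, with bounds uniform in $i$ because the $h_i$ are uniformly bounded on a common complex neighbourhood. Since they act blockwise, their infinite product is a symplectic map of $\ell_\infty^N\times\ell_{\infty,\bs m}^N$ in the scaled variables, and it preserves the class-$\mathcal H$ structure of the coupling. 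Indeed, each $V_{i,j}$ is simply composed with the $i$- and $j$-block maps, so the long range form \eqref{eq:shortrange} is kept, with comparable bounds.

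Next, I fix a reference action $I^0_i\in[\delta^2,2\delta^2]^N$ and introduce local action-angle coordinates $z_k=\sqrt{I^0_{i,k}+J_k}\,e^{2\pi\mathrm i\varphi_k}$ with $|J|\leq c\delta^2$. In these coordinates the $i$-th block becomes $\langle\xi^i(I^0),J\rangle+\frac12\langle A_iJ,J\rangle+O(\delta^3|J|)+O(\delta^5)$-type remainders, where $\xi^i=\omega_0^i+A_iI^0_i$. The complex width in $\varphi$ is uniform in $i$, since the annulus is away from $0$ by a factor independent of $i$. The coupling $\varepsilon H_1$ stays long range, but after the rescaling $J=\delta^2\widetilde J$ and time by $\delta^2$, the relevant small parameter is $\varepsilon\delta^{-2}$ together with the Birkhoff remainder $O(\delta)$. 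Choosing $\delta$ small and then $\varepsilon\leq\varepsilon_0(\delta)$ makes both perturbations small in the norms used by Theorem \ref{thm:mainlagrangianprecise}.

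Then I use $\sup_i\lVert A_i^{-1}\rVert<\infty$ to treat the frequencies $\xi=\{\omega_0^i+A_iI_i^0\}_i$ as free external parameters via the standard Legendre trick. This puts the system in the parametrized form \eqref{eq:fulldimintrolag0}, to which Theorems \ref{thm:mainlagrangianprecise} and \ref{thm:measestimatesgeneral} apply. They give an uncountable set of $\xi$, hence of initial actions $I^0$ in the annulus, carrying invariant full-dimensional tori with linear flow on $\mathbb T^{N\mathbb N}$. One point to check is that, unlike in \textbf{P1}, there is no Diophantine condition on $\omega_0^i$ itself. Diophantine-type properties needed for the abstract theorem must instead come from the choice of $\xi$ in the KAM set produced by Theorem \ref{thm:measestimatesgeneral}. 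This is fine because the box-dimension condition can be arranged for suitable $I^0$, for instance by making $\{\xi_i\}$ accumulate on finitely many points.

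I expect the main obstacle to be the uniformity of the constants. The abstract theorem's smallness threshold depends on the parameter range $|b-a|\sim\delta^2$ and on the widths, and all of these scale with $\delta$. One has to verify that the rescaled perturbation, of size about $\varepsilon\delta^{-2}+\delta$, still beats the threshold $\varepsilon_0(\kappa,\rho,\sigma,|b-a|)$ when $|b-a|\sim\delta^2$. My first attempt would be to rescale the frequency parameter so that its range is $O(1)$, put the $\delta$-dependence only into the perturbation size, and take $\delta$ small before fixing $\varepsilon_0$.
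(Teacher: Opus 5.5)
\paragraph{Verdict.} Your architecture matches what the paper has in mind, but the proposal has a genuine gap: stopping the Birkhoff normal form at order $4$ and fixing $\delta$ before $\varepsilon$ does not close the non-emptiness step, and the rescaling does not repair this.

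\paragraph{Where it breaks.} The architecture itself is fine: a uniform blockwise Birkhoff normal form, action--angle variables on a thin annulus, frequencies as parameters via $\sup_i\lVert A_i^{-1}\rVert<\infty$, clustered reference frequencies, then Theorems \ref{thm:mainlagrangianprecise} and \ref{thm:measestimatesgeneral}. The paper only indicates this proof, as a standard normal form argument followed by the same reduction as for Theorem \ref{thm:mainLagrangianinformal}. The problem is the size of the Birkhoff remainder.
\begin{itemize}
\item Rescaling actions and time cannot change the ratio between that remainder and the frequency room $\delta^2$. After normalisation the remainder has size $\tilde\varepsilon\gtrsim\delta$, whatever $\varepsilon$ is. (Incidentally, once time is also rescaled the coupling is $\varepsilon\delta^{-4}$, not $\varepsilon\delta^{-2}$.)
\item This is enough for the smallness threshold of Theorem \ref{thm:mainlagrangianprecise}.
\item It is not enough for the step that actually produces tori, namely the non-emptiness coming from Theorem \ref{thm:measestimatesgeneral}. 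Its implicit constant is proportional to the covering constant of the reference sequence $\tilde\xi$; this enters through $\mathrm{card}(\mathcal I)\lesssim q_n$ in Lemma \ref{lem:inclusion}.
\item Each $\tilde\xi_i$ must lie within $O(\delta^2)$ of $\omega_0^i$, and \textbf{P2} only says $\omega_0^i\in K$. So for a general (e.g.\ dense) sequence $\{\omega_0^i\}$, your $\tilde\xi$ needs about $\delta^{-2N}$ accumulation points at the normalised unit scale.
\item The bound the union argument then gives for $\mu(B_0)$ is of order $\delta^{-2N}\tilde\varepsilon^{1/12}\gtrsim\delta^{-2N+1/12}$, up to logarithms. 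This blows up as $\delta\to0$, so taking $\delta$ smaller makes matters worse.
\end{itemize}

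\paragraph{The missing idea.} Use the full strength of \textbf{P2}. The non-resonance $|\omega_0^i\cdot k|\geq\gamma$ for all $0<|k|_1\leq100$ is there so that the Birkhoff normal form can be pushed, uniformly in $i$, to order about $100$. That leaves a remainder $O(|z|^{101})$, and the annulus radius $\delta$ should then be taken as a small power of $\varepsilon$. This mirrors the reduction in Section \ref{sec:resultsnormalforms}, which flattens to order $100$, restricts to actions of size $\varepsilon^{1/100}$, and uses a parameter box of width $\varepsilon^{1/25}$. The remainder is then a high power of $\delta$ (about $\delta^{97}$ after normalisation). That is what beats the polynomial losses in $\delta$: the threshold $|b-a|^{24}$, the factor $\ell^{-1}$, and the covering constants. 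No rescaling is needed, and the rest of your reduction goes through as in the \textbf{P1} case.

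\paragraph{An alternative repair.} Theorem \ref{thm:mainellipticpts} only claims an uncountable family, so you could instead keep order $4$ and bypass Theorem \ref{thm:measestimatesgeneral}. Choose a point of $\mathcal O_F$ coordinate by coordinate. In normalised units each coordinate has $O(1)$ room, and at each step $n$ the finitely many conditions with top index $j$ remove only a fraction $\lesssim\tilde\varepsilon^{1/13}$ of it. That is, however, a different argument from the one you propose.
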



The abundance in the KAM-annulus around $\mathcal E_0$ of quasi-periodic motion is again obtained with respect to a class of singular measures on the set of actions.
We do not claim that the KAM-annulus that we display accumulates  on the continuation of the elliptic fixed point. Establishing such a result would require additional hypotheses on the frequencies at the elliptic fixed point. To avoid unnecessary length, we do not pursue this direction here.
\medskip

\subsection{The finite energy regime: motivation from celestial mechanics}

A rather natural instance of high-dimensional dynamics in finite energy regimes might correspond to the \textit{asteroid belt} in our solar system. Below we describe a \textit{very simplified} picture of this scenario.

Consider a system formed by an infinite number of bodies with masses $\mu_0=1$ and $\mu_i=\varepsilon e^{-i}$ for $i=1,\dots,\infty$ and some $0<\varepsilon\ll 1$. Suppose that the bodies interact 
via a \textit{smooth potential} which is proportional to the product of their masses and their distance.  As we will see in Section \ref{sec:particles},  the Hamiltonian governing the dynamics of this system is given by a Hamiltonian of the form \eqref{eq:firstHamnew}--\eqref{eq:firstHamnew2}. In a large part of the phase space, the dynamics corresponds (for $\varepsilon=0$) to motions in which each of the lighter particles $i=1,\dots, \infty$ (planets and asteroids) revolve around the heavy body $i=0$ (the sun) in a quasiperiodic fashion. For these configurations, the total energy of the system is finite (see Section \ref{sec:particles}).

Within this framework (assuming moreover that the smooth interaction potential satisfies mild nondegeneracy assumptions), we will construct in Section \ref{sec:particles} a  set of initial conditions for which the coupled system (i.e. with $\varepsilon\neq 0$) displays almost-periodic dynamics. The singular measure for which these KAM tori are abundant admits a rather natural interpretation in this scenario which might connect to the distribution of the so-called \textit{Kirkwood gaps} in the asteroid belt (see \cite{MR3551192} and the references therein). Namely, the lighter bodies i.e. those with $i\gg 1$, tend to accumulate on regions for which their corresponding unperturbed frequency is strongly non-resonant with respect to that of the bodies of large mass (i.e. those with $i=O(1)$) (see Figure \ref{fig:placeholder}).
\black

\begin{figure}
    \centering
    \includegraphics[scale=0.4]{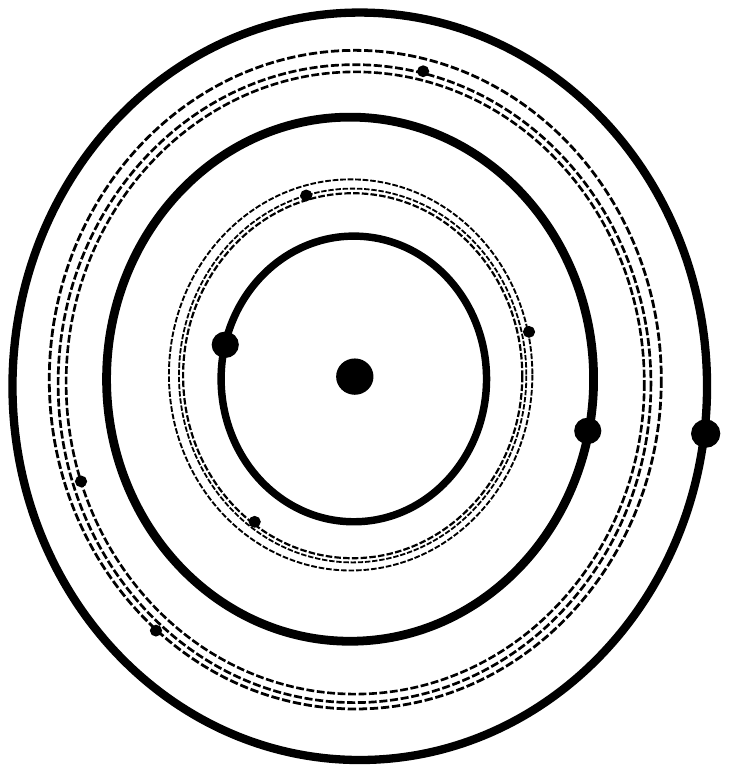}
    \caption{The smaller bodies tend to accumulate on regions for which their interaction with the large bodies is non-resonant. In this schematic picture the dashed curves correspond to approximate orbits of light particles while the solid ones correspond to approximate orbits of heavy bodies.}
    \label{fig:placeholder}
\end{figure}
\subsection{Organization of the article}

The structure of this article is organized into five main blocks:

\begin{itemize}
    \item \textbf{Normal Forms.} 
    In Sections~\ref{sec:mainabstract}, \ref{sec:functionalsetting}, \ref{sec:Kamlagrangian}, \ref{sec:measureestimatesLag}, we introduce and analyze the dynamics of suitable normal forms which allow us to:
    \begin{itemize}
        \item formulate the precise statement of our main theorem (Section~\ref{sec:mainabstract});
        \item establish the functional framework required for the KAM iteration (Section~\ref{sec:functionalsetting});
        \item apply KAM techniques to construct full-dimensional invariant tori (Sections~\ref{sec:Kamlagrangian}--\ref{sec:measureestimatesLag}).
    \end{itemize}

\item \textbf{Non-KAM set} In Section \ref{sec:stochlayer} we provide an argument which suggests that, for the class of systems considered in this paper, the KAM set cannot have positive measure with respect to the standard product measure.

    \item \textbf{Comparison.} 
    In Section~\ref{sec:comparison} we compare our abstract results with related works in the existing literature and emphasize the main novelties of our approach.

    \item \textbf{Infinite couplings.} In Section \ref{sec:actionanglenormalform} we complete the proofs of Theorems \ref{thm:mainLagrangianinformal} and \ref{thm:mainellipticpts}. The argument boils down to obtaining a (infinite-dimensional) normal form for which the results in Section \ref{sec:mainabstract} apply.

    \item \textbf{Applications.} 
  In Section~\ref{sec:particles} we present applications of our techniques to the construction of almost-periodic motion in systems of infinitely many interacting particles.
\end{itemize}

\section{Notation, phase space and parameters}\label{sec:definitions}

In this section we discuss some concepts which are needed to state more precise formulations of our main results. These are stated for perturbations of the abstract normal form \eqref{eq:normalformLag}.

\subsection{Notations}
 The following notations will be used in what follows. 
\begin{itemize}
\item We define the sets (given $A\subset\mathbb N$ we denote by $|A|$ its cardinality)
\[
\mathbb N_0=\{A\subset \mathbb{N} \colon |A|<\infty\}\subset 2^\mathbb N
\]
and
\begin{align*}
\mathbb N_\infty=&\{\alpha\in  \mathbb N^{\mathbb N}\colon \alpha_n\neq 0\text{ only for finitely many $n$}\}\\
\mathbb Z_\infty=&\{l\in \mathbb Z^{\mathbb N}\colon l_n\neq 0\text{ only for finitely many $n$}\}\\
\end{align*}
 \item Given $A\in\mathbb N_0$
  we denote by 
    \[
    \overline A=\max\{j\in A\},\qquad\qquad \underline A=A\setminus \{\overline A\}.
    \]
    
    \item Given $A\in \mathbb N_0$, $l\in \mathbb{Z}_\infty$ and  $\alpha\in\mathbb{N}_\infty$, we define (by $|l|$ we mean the vector $|l|=(|l_1|,|l_2|,\dots)$, do not confuse with the $|l|_1$ norm)
\[
\mathcal{A}=\{(l,\alpha)\in\mathbb{Z}_\infty\times\mathbb{N}_\infty\times\mathbb{N}_\infty\times\mathbb{N}_\infty\colon \mathrm{supp}(|l|+\alpha)=A\}.
\]

\item  For $\bs m\in\ell^\mathbb R _{\infty,\kappa}$ and any  $A\in\mathbb N_0$ we define
\[
m_A=\prod_{j\in A}m_j.
\]
\end{itemize}

\subsection{Phase space}
In this section we introduce the phase space in which the abstract normal form \eqref{eq:normalformLag} is defined. 
We let
\begin{equation}
\label{PhaseSpace}
\mathcal Q=\{(\varphi,J)\in \mathcal T_\infty\times\ell_\infty\}
\end{equation}
and, given constants $\rho>0$ and $\sigma>0$ define the bounded subset  (cf. \eqref{ComplexNbhd})
\begin{equation}\label{eq:phasespacefulldim}
\mathcal Q_{\rho,\sigma}=\left\{(\varphi,J)\in\mathcal Q\colon\max\left\{\frac{1}{\sigma}|\mathrm{Im}\varphi|_\infty, \frac{1}{\rho}|J|_\infty\right\} < 1\right\}\subset \mathcal Q.
\end{equation}
\\

Given two Banach spaces $E,F$ and an open subset $U\subset E$ we will denote the space of bounded holomorphic mappings from $U$ to $F$ by  \begin{equation}\label{eq:Banachspacehol}
\mathcal H(U,F)=\{\Phi:U\to F\colon  \text{holomorphic and }\sup_{a\in \overline U}\lVert \Phi(a)\rVert_F<\infty\}.
\end{equation}
This is a Banach space (see Lemma \ref{lem:banachspaceholom} in Appendix \ref{sec:proofBanach}).

\subsubsection{Dynamics on scaled symplectic Banach manifolds}\label{sec:dynsympBanachmfolds}
We now introduce a symplectic structure on $\mathcal Q$ and $\mathcal M$. In the following we fix $\kappa>0$ and $\boldsymbol m\in\ell^\mathbb R_{\infty,\kappa}$ (which was defined in \eqref{eq:spacedecayingmasses}). Then, we  
equip $\mathcal Q$ with the symplectic form $\lambda: T\mathcal Q\times T\mathcal Q\to \mathbb C$ which, in local coordinates $(\varphi,J)\in\mathcal Q$ reads
\begin{equation}
\label{Symplectic}
\lambda=\sum_{n\in\mathbb N}m_n\  \mathrm{d}J_n\wedge\mathrm{d}\varphi_n.
\end{equation}
In what follows, we will consider Hamiltonians defined on open subsets of the symplectic manifold $(\mathcal Q,\lambda)$. Motivated by the application to systems of interacting particles, given $\rho>0$ and $\sigma>0$ we consider Hamiltonians belonging to a closed  subspace of $\mathcal H(\mathcal Q_{\rho,\sigma},\mathbb C)$  which we define as follows.  Given $h\in\mathcal H(\mathcal Q_{\rho,\sigma},\mathbb C)$ there exists a unique way\footnote{Uniqueness follows from the uniqueness of Taylor-Fourier series expansions for functions in $\mathcal H(\mathcal Q_{\rho,\sigma},\mathbb C)$.} of writing
\[
h=\sum_{A\in\mathbb N_0} h_A
\]
where 
\[
h_A=\sum_{(l,\alpha)\in\mathcal A}h^{[l]}_{\alpha} \ J^\alpha e(l\varphi)
\]
Then, for $\beta\in (0,1)$ we define the Banach space
\begin{equation}\label{eq:defnBanachspace}
\mathcal Y_{ \beta ,\rho,\sigma}=\{h\in\mathcal H(\mathcal Q_{\rho,\sigma},\mathbb C)\colon \ \lVert h\rVert_{\beta,\rho,\sigma}<\infty\}
\end{equation}
where
\begin{equation}\label{eq:2normLag}
\lVert h\rVert_{\beta,\rho,\sigma}=\sup_{j\in\mathbb{N}}\ \frac{1}{m_j}\sum_{\overline A=  j} (m_{\underline A})^{-\beta} |h_{A}|_{\rho,\sigma},
\end{equation}
and
\[
|h_A|_{\rho,\sigma}=\sum_{(\alpha,l)\in\mathcal{A}} |h^{[l]}_{\alpha}|\  \rho^{|\alpha|_1} \ e^{|l|_1\sigma}.
\]

\subsubsection{Hamiltonian flows on $\mathcal Q$} Given $H\in\mathcal Y_{\beta,\rho,\sigma}$ we can associate to it the unique vector field $X_H:\mathcal Q_{\rho/2,\sigma/2}\to T_{\mathcal Q_{\rho/2,\sigma/2}} \mathcal Q_{\rho,\sigma}$ which satisfies
\[
\lambda(X_H,v)=\mathrm{d}H(v)\qquad\qquad \forall v\in T \mathcal Q_{\rho,\sigma}\simeq \ell_\infty^2.
\]
In local coordinates, these equations read
    \[
    X_H=( (X_H)_1,(X_H)_2,\dots, (X_H)_n,\dots),\qquad\qquad 
    (X_H)_n= m_n^{-1} (\partial_{J_n}H,-\partial_{\varphi_n}H).
    \]
Under these circumstances, the classical existence and uniqueness theorem for ODEs (see, for instance, \cite{DiffmfoldsLang}) yields the existence of a local flow for $H$: that is, given any open subset 
$V\subset \mathcal Q_{\rho/2,\sigma/2}$, there exists a non-empty open interval $I\subset \mathbb R$ centered at $\{0\}$ and a real-analytic map $\Phi_H:V\times I\to \mathcal Q_{\rho,\sigma}$ which, for every $(u,t)\in V\times I$, verifies
\[
\partial_t \Phi_H(u,t)= X_H(\Phi_H(u,t)).
\]
One thus may wonder whether there exist invariant submanifolds for the flow $\Phi_H$ on which, in particular, the flow is globally defined (for example, invariant submanifolds entirely contained in $\mathcal Q_{\rho,\sigma}$).  We will be particularly interested in studying the existence of invariant tori. More concretely, we will seek conjugacies for the flow $\Phi_H$ such that the conjugated flow has  invariant submanifolds of the form $\mathcal T_{\infty}\times\{0\}$ on which it is conjugated to a linear translation $t\mapsto \theta+\omega t$ with rotation vector $\omega\in\ell_\infty$.\black   We should point out, that, in contrast with the finite dimensional situation, such flows are never transitive (as this would imply the existence of a countable dense subset for $\ell_\infty$, but $\ell_\infty$ is not separable).

\subsection{External parameters}
The perturbed normal form \eqref{eq:normalformLag}  below  
 depends on external parameters $\xi\in\ell_\infty$. To account for the dependence on these parameters we introduce some extra notation. Namely, given any (complex) open subset $O\subset\ell_\infty$  we denote by 
\begin{equation}\label{eq:defnBanachspaceLag}
\mathcal Y_{\beta,\rho,\sigma,O}=\{h:\mathcal Q_{\sigma,\rho}\times O\to\mathbb{C}\colon h\ \text{is real-analytic},\ \lVert h\rVert_{\beta,\rho,\sigma,O}:=\sup_{\xi\in \overline O} \lVert h(\cdot,\xi)\rVert_{\beta,\rho,\sigma}<\infty\} 
\end{equation}
Clearly $\mathcal Y_{\beta,\rho,\sigma,O}\subset\mathcal H(\mathcal Q_{\rho,\sigma}\times O, \mathbb C)$.

\section{Main results for abstract normal forms}\label{sec:mainabstract}

For notational purposes, it will be convenient to define the \textit{mass product} on $\ell_\infty$ by
\begin{equation}Â \label{def.massproduct}Â 
a\cdot b=\sum_{i\in\mathbb N} m_i a_{i}b_{i}
\end{equation} 
  Recall that we  always assume that  
\[
\bs m\in \ell^\mathbb R_{\infty,\kappa}=\{\boldsymbol m=\{m_i\}_{i\in\mathbb N}\subset (0,\infty) \colon \sup_{i\in\mathbb N} m_i\ \exp(\kappa i)<\infty\}
\]
for some fixed $\kappa>0$ and that $\{m_i\}_{i\in\mathbb N}$ forms a decreasing sequence.  We let $\beta\in(0,1)$, let $\rho,\sigma>0$, and, for $a<b$, denote by $O \subset \ell_\infty$ the complex $\rho$-neighbourhood of $[a,b]^{\mathbb N}$. Then,  we consider a Hamiltonian $H_{\xi}$ given by
\begin{equation}\label{eq:normalformLag}
H_{\xi}(\varphi,J)=\xi\cdot J+P(\varphi,J;\xi)
\end{equation}
where $P=P_h+\varepsilon \widetilde P$ for some  $P_h, \widetilde P\in\mathcal Y_{\beta,\rho,\sigma,O}$ satisfying:
\begin{itemize}
    \item \textit{(Perturbation part\black):} $\lVert \widetilde P\rVert_{\beta,\rho,\sigma,O}\leq 1$,
\item \textit{(Normal form part\black):}  
$\lVert P_h\rVert_{\beta,\rho,\sigma,O}\leq 1$ and
\[
P_h|_{\{J=0\}}=\partial_J^n P_h|_{\{J=0\}}=0
\]
 for any $n\in\mathbb N^{\mathbb N}$ with $|n|_1= 1$. 
\end{itemize}
\color{black}In words, we consider a $O(\varepsilon)$ perturbation of a system ($\xi\cdot J+P_h$) which has an invariant torus $\{J=0\}$  that is almost periodic  with frequency $\xi$. Our purpose is to investigate the fate of this invariant torus for $\varepsilon>0$.

\begin{rem}[Long range interaction]
    It is worth pointing out that the equations of motion associated to the Hamiltonian \eqref{eq:normalformLag} on the symplectic manifold $(\lambda,\mathcal Q)$   (defined by \eqref{PhaseSpace} and \eqref{Symplectic})
    are given by 
    \begin{equation}\label{eq:vectorfieldnormalform}
        \begin{split}
            \dot \varphi_n=\xi_n-\frac{1}{m_n}\partial_{J_n}P\qquad\qquad \dot J_n=-  \frac{1}{m_n} \partial_{\varphi_n} P
        \end{split}
    \end{equation}
    what reflects the long range nature of the problem. Indeed, suppose that  
    $\displaystyle\widetilde P=\sum_{i<j}m_im_j \widetilde P_{ij}$ with $|\widetilde P_{ij}|_{\rho,\sigma}\leq 1$ so $\widetilde P\in\mathcal X_{\beta,\rho,\sigma}$ for any $\beta<1$. The equations \eqref{eq:vectorfieldnormalform} for $\varepsilon  \widetilde P$ read 
\begin{align*}
\dot \varphi_n=\xi_n-\varepsilon \sum_{i\neq n}m_i\partial_{J_n}\tP_{in}\qquad\qquad \dot J_n=-  \varepsilon  \sum_{i\neq n}m_i \partial_{\varphi_n} \tP_{in}
\end{align*}
We observe that the mass $m_n$ disappears, and the $n$-th component of the vector field can only be bounded by $\varepsilon$ for all $n\in\mathbb{N}$. This corresponds precisely to the long range setting discussed in 
Section~\ref{sec:intro}. For systems of interacting particles (introduced in Section \ref{sec:particles}), the long range nature of the interaction is already apparent in Newton's equations \eqref{eq:newton}.  
\end{rem}
\vspace{0.2cm}
\begin{thm}[Full-dimensional KAM tori]\label{thm:mainlagrangianprecise}
Let $\beta \in (0,1)$, let $\rho,\sigma>0$, and let $H_{\xi}\in\mathcal Y_{\beta,\rho,\sigma,O}$ be the Hamiltonian \eqref{eq:normalformLag} on the symplectic manifold $(\lambda,\mathcal Q)$. Then, for $\varepsilon>0$ sufficiently small\footnote{A (very far from optimal) condition  of the form $\varepsilon\lesssim \min\{\rho^6\sigma^6,\ |b-a|^{24}\}$ suffices.}, there exist a  non-empty real subset $\mathcal O_F\subset O$ and a real-valued Lipschitz\footnote{The map is Lipschitz when viewed as a map from a subset of $\ell_\infty^2$ to $\ell_\infty^2$.} map
\[
\phi:\mathcal O_F \to O
\]
such that, for any $\xi\in \mathcal O_F$, there exists a real-analytic symplectic map $\Phi \in \mathcal H(\mathcal Q_{\rho/2,\sigma/2}, \mathcal Q_{\rho,\sigma})$, $\mathcal O(\varepsilon^{1/2})$-close to the identity, such that 
\[
\Phi(\mathcal Q_{\rho/2,\sigma/2}) \subset \mathcal Q_{\rho,\sigma},
\]
and
\[
H_{\phi(\xi)}\circ \Phi(\varphi,J) \;=\; \xi\cdot J + R(\varphi,J;\xi),
\]
where
\[
R|_{J=0}=0 
\qquad\text{and}\qquad 
\partial^{n}_J R|_{J=0}=0
\]
for all $n\in\mathbb N^{\mathbb N}$ with $|n|_1=1$.  
\end{thm}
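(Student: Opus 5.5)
The proof will be a parameter-dependent KAM (Newton) scheme of P\"oschel type, run in the weighted spaces $\mathcal Y_{\beta,\rho,\sigma,O}$. First, I will truncate the perturbation $\varepsilon\widetilde P$ to its Taylor polynomial of degree one in $J$, writing $\varepsilon\widetilde P = \varepsilon(\widetilde P_0(\varphi)+\widetilde P_1(\varphi)\cdot J)+\varepsilon \widetilde P_{\ge2}$. The last term is absorbed into the normal form part, since $\widetilde P_{\geq 2}$ already has the required vanishing at $J=0$. Next, I will set up the iterative step. Given $H_\nu=\xi_\nu\cdot J+ R_\nu + \varepsilon_\nu(f_\nu(\varphi)+g_\nu(\varphi)\cdot J)$, I will solve the homological equation
\[
\{\xi\cdot J, F\}= \varepsilon_\nu(f_\nu+g_\nu\cdot J)-\varepsilon_\nu\langle f_\nu+ g_\nu \cdot J\rangle,
\]
with $F=F_0(\varphi)+F_1(\varphi)\cdot J$. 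The mean $\langle g_\nu\rangle$ shifts the frequency, and this shift is the source of the map $\phi$. Because of the symplectic form $\lambda$, the bracket carries factors $m_n^{-1}$, so the equation reads coefficientwise $i(l\cdot\xi)\,F^{[l]}=\dots$ with the mass product.

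The first key lemma is a Poisson bracket estimate: $\lVert\{F,G\}\rVert_{\beta,\rho',\sigma'}\lesssim (\rho-\rho')^{-1}(\sigma-\sigma')^{-1}\lVert F\rVert\lVert G\rVert$. This should follow from the structure of the norm \eqref{eq:2normLag}. Brackets combine supports $A,B$ sharing an index $n$; the factor $m_n^{-1}$ from $\lambda$ is compensated because $n$ is either the maximum of one of the sets, giving a factor $m_n$, or lies in $\underline A$, giving a factor $m_n^{1-\beta}\cdot m_n^{\beta}$. Summing over $n$ then uses $\sum_n m_n^{1-\beta}<\infty$, which holds by exponential decay. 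From this lemma I get Cauchy estimates, control of the time-one flow of $X_F$, and the property that the transformed Hamiltonian stays in $\mathcal Y_{\beta,\rho,\sigma,O}$.

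The second ingredient, and the main obstacle, is the small divisors. I will impose a non-resonance condition adapted to the weights, namely
\[
|l\cdot\xi|\ge \gamma\, m_{\overline A}\, m_{\underline A}^{\beta'}\, \Psi(l)^{-1}
\]
for $\mathrm{supp}\,l=A$, with $\beta'<\beta$ and $\Psi$ a suitable function of $|l|_1$. The loss $m_{\underline A}^{\beta-\beta'}$ is paid by passing from $\beta$ to a slightly smaller $\beta$ at each step; the exponent decreases along a convergent sequence, which keeps it in $(0,1)$. Because every divisor carries the factor $m_{\overline A}$, dividing keeps the weight $m_j^{-1}$ in the norm bounded. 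With this condition, $F$ satisfies estimates that lose only $e^{-|l|_1\delta\sigma}$-type factors. Fixing $\gamma\sim\varepsilon^{1/2}$ produces the stated $\varepsilon^{1/2}$-closeness of $\Phi$ to the identity.

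Finally, I will run the iteration with super-exponential convergence $\varepsilon_{\nu+1}\lesssim\varepsilon_\nu^{3/2}$, using the standard choice of shrinking analyticity strips. The frequency shifts, which are $\varepsilon_\nu$-small and Lipschitz in $\xi$, sum to a Lipschitz map $\xi\mapsto\phi(\xi)$ close to the identity. I will define $\mathcal O_F$ as the set of $\xi$ for which all the $\xi_\nu$ satisfy the non-resonance condition above. The composed conjugacies converge in $\mathcal H(\mathcal Q_{\rho/2,\sigma/2},\mathcal Q_{\rho,\sigma})$, and the limit has the required normal form. To show $\mathcal O_F\neq\emptyset$, I will construct, for instance inductively in $n$, a sequence $\xi_n$ avoiding finitely many resonance zones at each scale, where the thresholds depend on the previous coordinates. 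Each step removes a set of relative measure $\ll|b-a|$ because $\sum_{\overline A=n}m_{\underline A}^{\beta'}\dots$ is summable. This gives a point, and in fact a Cantor set, in $\mathcal O_F$.
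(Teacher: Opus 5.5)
There is a genuine gap at the small divisors. With the bracket \eqref{eq:defnPoissonbracket} and the mass product, $\{\xi\cdot J,F\}=-\sum_j\xi_j\,\partial_{\varphi_j}F$. The factors $m_j^{-1}$ and $m_j$ cancel, so the divisors are the plain sums $\langle\xi_A,l\rangle=\sum_{n\in A}\xi_n l_n$, with no masses. The norm \eqref{eq:2normLag}, however, weights the top index by exactly $m_{\overline A}^{-1}$ (first power, not $\beta$). A block of the perturbation therefore only satisfies $|Q_A|_{\rho,\sigma}\le m_{\overline A}\,m_{\underline A}^{\beta}\lVert Q\rVert$, with no spare power of $m_{\overline A}$; this is what separates the long range case from P\"oschel's $\beta>1$. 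If you only know $|\langle\xi_A,l\rangle|\ge\gamma\, m_{\overline A}m_{\underline A}^{\beta'}\Psi(l)^{-1}$, dividing gives $|F_A|\lesssim\gamma^{-1}\Psi(l)\,m_{\underline A}^{\beta-\beta'}\lVert Q\rVert$. This has lost precisely the factor $m_{\overline A}=m_j$ that the weight $m_j^{-1}$ needs, so your claim that dividing keeps this weight bounded is backwards.

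Concretely, take $Q=\sum_j m_1m_j\cos(\varphi_1-\varphi_j)$, which lies in $\mathcal Y_{\beta,\rho,\sigma}$, and $\xi$ with $|\xi_1-\xi_j|$ of order $\gamma m_j$, which your conditions do not exclude. Then $F$ contains, for every $j$, a term of size $\sim m_1\gamma^{-1}$ supported on $\{1,j\}$. Hence $m_j^{-1}|F_{\{1,j\}}|\to\infty$, so $F$ lies in no space $\mathcal Y_{\beta'',\rho',\sigma'}$. The $j$-th component of $X_F$ is of order $m_1(\gamma m_j)^{-1}$, and $\phi_F$ is not a near-identity map of $\mathcal Q$. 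The lower bound must instead be uniform in $\overline A$, as in Definition \ref{defn:diophantinesetLag}: $|\langle\xi_A,l\rangle|\ge\varepsilon_n^{1/12}$ for every $A$ with $\underline A\in\mathbb A_n$ and $0<|l|_1\le L_n$, whatever $\overline A$ is. Section \ref{sec:stochlayer} indicates this is not a technicality. Resonances $|\xi_1-\xi_j|\lesssim\sqrt\varepsilon$ with $j$ arbitrarily large obstruct Lagrangian tori near $\{J=0\}$, whereas your condition only excludes $|\xi_1-\xi_j|\lesssim\gamma m_j$.

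This also removes the mechanism behind your non-emptiness argument. The factor $m_{\overline A}$ is what makes your excluded sets summable over $\overline A$. Under the correct uniform condition, each fixed pair $(\underline A,l)$ produces, for every $j>\max\underline A$, a resonant strip of the same width, which is why positive product measure cannot be expected. The paper copes with this in two ways. First, it truncates to the finite family $\underline A\in\mathbb A_n$ of \eqref{eq:An} with $|l|_1\le L_n$, paying for the discarded $\underline A$ by lowering $\beta$ by $\mu_n$. A weight $m_{\underline A}^{\beta'}$ like yours can play this role, but only on $\underline A$, never on $\overline A$. Second, it localizes near frequencies of box dimension $d<1$, where the strips of all large indices lie inside enlarged strips attached to $O(q_n)$ indices (Lemma \ref{lem:inclusion}). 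This gives Theorem \ref{thm:measestimatesgeneral}, and in particular $\mathcal O_F\neq\emptyset$.

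Separately, your step keeps the quadratic part $R_\nu$ of unit size on a fixed action domain and leaves it out of the homological equation. Then $\{R_\nu,F_0\}$ produces a $\varphi$-dependent term linear in $J$ of size $\sim\varepsilon_\nu/\gamma$, which is incompatible with $\varepsilon_{\nu+1}\lesssim\varepsilon_\nu^{3/2}$. The paper avoids this by shrinking the action domain to $\rho_n=\varepsilon_n^{1/2}\rho$, so that the quadratic part is itself $O(\varepsilon_n)$ and is treated as perturbation. It recovers the domain $\rho/2$ at the end because each $G_n$ is affine in $J$ (Lemma \ref{lem:conjugacyextensionlag}).
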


The consequences of the theorem are well known: the real-analytic infinite dimensional torus 
\[
\mathtt T_\infty=\Phi(\{J=0\})\subset\mathcal Q_{\rho,\sigma}
\]
is invariant for the flow of $H_{\phi(\xi)}$ and the flow restricted to it is conjugated to the linear translation $t\mapsto \varphi+\xi t$.  Moreover, since 
$\Phi \in \mathcal H(\mathcal Q_{\rho/2,\sigma/2}, \mathcal Q_{\rho,\sigma})$ is $\mathcal O(\varepsilon^{1/2})$-close to the identity with respect to the metric on $\mathcal Q_{\rho/2,\sigma/2}$ induced by the $\ell_\infty$ norm, it follows that the associated invariant torus $\mathtt T_\infty$ is $\mathcal O(\varepsilon^{1/2})$-close (in the same metric) to the unperturbed invariant torus corresponding to $\varepsilon=0$, namely $\{J=0\}$.  
This means that for any bounded $A \subset \mathbb N$, the projection of $\mathtt T_\infty$ onto the $A$-coordinates is a real-analytic torus that is $\mathcal O(\varepsilon^{1/2})$-close to $\{J_A=0\}$, in analyticity domains of uniform width with respect to $A$.

\subsection{Measure estimates}

The next natural question is to measure the ``size'' of the set $\mathcal O_F\subset [a,b]^{\mathbb N}$ of frequencies for which Theorem \ref{thm:mainlagrangianprecise} holds.  We show below that, provided $\varepsilon$ is small enough, the set $\mathcal O_F$  in Theorem \ref{thm:mainlagrangianprecise} is not empty. Moreover, we show that these sets are abundant in the neighborhood of frequencies with \textit{box dimension} strictly smaller than one.

\begin{thm}\label{thm:measestimatesgeneral}
 Fix any sequence $\tilde{\xi}:=\{\tilde\xi_n\}_{n\in\mathbb N}\subset[a,b]^{\mathbb N}$ with $\mathrm{dim}_{box}(\tilde{\xi})=d\in(0,1)$, and define
\[
\Lambda_d(\tilde{\xi})=\prod_{n=1}^\infty \left[\tilde\xi_{n}-\frac{1}{n^{1/2d}},\;\tilde\xi_{n}+\frac{1}{n^{1/2d}} \right].
\]
Then, if $\mu$ denotes the standard probability measure on the product $\sigma$-algebra of $\Lambda_d(\tilde{\xi})$ and denote by $\mathcal O_F$ the set of frequencies for which Theorem \ref{thm:mainlagrangianprecise} holds, we have 
\[
\mu(\mathcal O_F\cap \Lambda_d(\tilde{\xi}))\;\geq\; 1 - O\!\left(\varepsilon^{\frac{1}{24}(1-d)}\right).
\]
\end{thm}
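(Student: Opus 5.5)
The set $\mathcal O_F$ produced by the KAM scheme behind Theorem \ref{thm:mainlagrangianprecise} is defined by a sequence of non-resonance conditions on the final frequencies $\xi$. The Lipschitz map $\phi$ relates these to the original parameters. The plan is to describe the complement of $\mathcal O_F$ as a countable union of resonant strips, estimate the $\mu$-measure of each strip, and sum.

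The first step is to write the small divisor conditions in the form they take in the long-range scheme. At the $k$-th step of the iteration one solves homological equations block by block: for each $A\in\mathbb N_0$ and each $l$ with $\mathrm{supp}|l|\subset A$, $|l|_1\le K_k$. The weight $(m_{\underline A})^{-\beta}$ in \eqref{eq:2normLag} allows small divisors that depend on the block. I would impose a condition of the form
\[
|l\cdot \xi|_{\mathrm{std}}=\Bigl|\sum_n l_n\xi_n\Bigr|\ \ge\ \gamma\, (m_{\underline A})^{\beta'}\, |l|_1^{-\tau},\qquad \gamma=\varepsilon^{c},
\]
for some $\beta'<\beta$. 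The norm loss $(m_{\underline A})^{\beta-\beta'}$ is then absorbed by the weight.

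The key point is that the divisor is bounded below in terms of the \emph{lower} masses, while the top index $\overline A=j$ is free. One should therefore use the randomness in the coordinate with the largest frequency-interval, i.e. the smallest index $\underline{n}=\min A$. Under $\mu$, $\xi_{\min A}$ is uniform on an interval of length $2(\min A)^{-1/2d}$. Conditioning on the other coordinates, the strip for a fixed $(A,l)$ therefore has measure at most
\[
\frac{\gamma\, m_{\underline A}^{\beta'}|l|^{-\tau}}{|l_{\min A}|\,(\min A)^{-1/2d}}.
\]
Summing over all $A$ with $\min A=i$ against the factor $m_{\underline A}^{\beta'}$ is where the exponential decay $m_n\lesssim e^{-\kappa n}$ is used. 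Every $A$ with $|A|\ge2$ contains at least one index in $\underline A$. Since $\sum_{B}m_B^{\beta'}\le\prod_n(1+e^{-\beta'\kappa n}\cdot\#)$ converges, the total is a geometric series. The problem is the singletons and the sums over the top index $j$, for which no mass gain is available.

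The box dimension condition is needed exactly for these terms, and I expect this step to be the main obstacle. For resonances $l=(l_i,l_j)$ with $\underline A=\{i\}$ and $j\to\infty$, the factor $m_i^{\beta'}$ is fixed, but there are infinitely many $j$. The plan is to use the fact that $\xi_j$ lies within $j^{-1/2d}$ of $\tilde\xi_j$. Since $\dim_{box}\{\tilde\xi_n\}=d$, the $\delta$-neighbourhood of $\{\tilde\xi_n\}$ can be covered by $N(\delta)\lesssim\delta^{-d-\eta}$ intervals. I would group the $j$'s according to which $\delta$-interval the point $\tilde\xi_j$ lies in, using dyadic scales $\delta\sim 2^{-s}$ and $j\gtrsim\delta^{-2d}$. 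The resonance $l_i\xi_i+l_j\xi_j\approx0$ then only needs to be excluded once per covering interval, not once per $j$. Each interval costs $\gamma\,\delta^{-1}\cdot\delta$ in the measure of $\xi_i$, up to losses. Summing over scales gives something like $\sum_s\gamma\,2^{s(d-1)+\ldots}$, which is finite precisely because $d<1$. This accounts for the exponent $(1-d)$.

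In this way the neighbouring coordinates $\xi_j$ act as a thickening of order $\delta$ of a set of small box dimension, and an interpolation between the $\gamma$-width of the divisor and the scale $\delta$ gives the final bound $\gamma^{(1-d)}$-type. With $\gamma=\varepsilon^{1/24}$, which is dictated by the smallness condition $\varepsilon\lesssim|b-a|^{24}$ in Theorem \ref{thm:mainlagrangianprecise}, this gives $O(\varepsilon^{(1-d)/24})$.

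Finally, the conditions are stated on $\xi$ itself, since in Theorem \ref{thm:mainlagrangianprecise} the final frequency is the parameter and $\phi$ handles the frequency drift. The Lipschitz property of $\phi$ with constant $1+O(\varepsilon^{1/2})$ is therefore only needed to ensure that the conditions imposed at every step are implied by the final ones with $\gamma$ replaced by $\gamma/2$. This reduces the measure estimate to the union bound described above.
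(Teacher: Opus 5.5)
Your architecture matches the paper's: resonant strips, Fubini in a low-index coordinate, and box dimension to tame the free top index $j$. The gap is at the step you yourself flag, which is asserted rather than proved, and the bookkeeping you sketch does not produce it. When all $j$ whose centres lie in one $\delta$-interval are merged, the merged strip has width $\gamma+|l_j|\delta$, not $\gamma$. So a scale $\delta\gg\gamma/L$ costs about $N(\delta)\,L\delta\approx L\delta^{1-d}$, which is not small. If instead every interval cost only $\gamma$, scale $s$ would cost $\gamma 2^{sd}$, which diverges. Neither regime gives $\sum_s\gamma 2^{s(d-1)}$.

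The paper (Lemma \ref{lem:inclusion}) uses a single scale per KAM step, tied to the resonance width. It takes $q_n$ with $L_nq_n^{-1/d}\le\varepsilon_n^{1/12}$. For $j\ge q_n$, both the random displacement of the $j$-th coordinate from its centre and the clustering of the centres are then below $\varepsilon_n^{1/12}/L_n$. Hence all these strips lie in $\lesssim N(q_n^{-1/d})\lesssim q_n$ strips of seven times the width, and the $j<q_n$ are counted one by one. This gives $\lesssim q_n\approx(L_n\varepsilon_n^{-1/12})^d$ strips, each of measure $\lesssim(n^2|\log\varepsilon_n|)^{1/d}\varepsilon_n^{1/12}$ (Lemma \ref{lem:measureresonantsetLag}), so the total is $\varepsilon_n^{(1-d)/12}$ up to subexponential factors. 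Those factors, together with the count of $(\underline A,l)$ from Lemma \ref{lem:technicalmeasuretechnicallag}, are absorbed by halving the exponent. That halving, not $\varepsilon\lesssim|b-a|^{24}$, is where $1/24$ comes from; one then sums over $n$.

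The exponent $1-d$ rests on the threshold $q_n$ coinciding with the covering number $N(\delta)\approx\delta^{-d}$. This holds for the widths $\ell n^{-1/d}$ actually used in Section \ref{sec:measureestimatesLag}. With the widths $n^{-1/(2d)}$ you adopt, the threshold is $\delta^{-2d}$ and single-scale counting gives only $\gamma^{1-2d}$. A correct multi-scale bound, where the block $j\in[2^k,2^{k+1})$ costs at most $\min\{2^k\gamma,\,L2^{-k(1-d)/(2d)}\}$, gives about $\gamma^{(1-d)/(1+d)}$, not $\gamma^{1-d}$. With $\gamma=\varepsilon_n^{1/12}$ that would still suffice, since $\frac{1-d}{12(1+d)}>\frac{1-d}{24}$, but it is not the computation you describe.

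Second, your non-resonance conditions are not the ones defining $\mathcal O_F$. The statement concerns $\bigcap_n\mathcal O_n$ from Definition \ref{defn:diophantinesetLag}. At step $n$ this is a uniform lower bound $\varepsilon_n^{1/12}$ for $0<|l|_1\le L_n$ and $\underline A\in\mathbb A_n$, with the $\beta$-loss $\mu_n$ spent on discarding $\underline A\notin\mathbb A_n$.

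Your condition $\gamma\,m_{\underline A}^{\beta'}|l|_1^{-\tau}$ with a fixed $\beta'$ fails on two counts:
\begin{itemize}
\item It costs a fixed $\beta'$ in the exponent at every homological equation, so infinitely much in total.
\item It does not imply the paper's conditions for large $n$, because $\underline A\in\mathbb A_n$ allows $m_{\underline A}$ as small as $\varepsilon_n^{2n^2/\beta}$.
\end{itemize}
So measuring your set gives no lower bound on $\mu(\mathcal O_F)$. The estimate has to be carried out step by step with the step-dependent $\varepsilon_n$, $L_n$, $\mathbb A_n$ and then summed. Incidentally, singletons need no box-dimension argument: they are non-resonant because $0\notin[a,b]$.
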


We observe that for $\mathrm{dim}_{box}(\tilde\xi)=d\in (0,1)$ it is possible to find non-resonant vectors $\xi$ which are close to $\tilde\xi$ in the weighted $\ell_\infty$ norm with weights $n^{1/d}$. In this sense, as $d\to 0$ we can find non-resonant vectors closer to the given $\tilde\xi$.

As will become apparent from the proof of Theorem \ref{thm:measestimatesgeneral}, carried out in Section \ref{sec:measureestimatesLag}, the main idea behind this result is that working
with highly concentrated set of frequences leads to overlaps in resonant zones which allows to reduce
the number of resonant zones to a finite set at each step of the KAM procedure.

 \begin{rem}  From the proof of Theorem \ref{thm:measestimatesgeneral} we observe that if $\mathrm{dim}_{box}(\tilde\xi)=d'<d$ then, also
\[
\mu(\mathcal O_F\cap \Lambda_d(\tilde\xi))\geq 1-O(\varepsilon^{\frac{1}{24}(1-d)}).
\]
 In particular, we can also obtain measure estimates for the case where $\mathrm{dim}_{box}(\tilde\xi)=0$. In this case given $\tilde\xi$ we can find non-resonant vectors $\xi$ which are super-polynomially close to $\tilde\xi$: i.e. they are close in any weighted $\ell_\infty$ norm with weight $n^{1/d}$, $d>0$. 
\end{rem}

\section{Measure of the non-KAM set}\label{sec:stochlayer}
In this section we provide a heuristic  argument which explains why it is not reasonable to expect that Theorem \ref{thm:mainlagrangianprecise} holds for a positive measure set with respect to the product measure on $[a,b]^\mathbb N$. For technical reasons we only present the argument for Hamiltonians on high dimensional manifolds but, after some additional work, it is possible to adapt our construction to the infinite-dimensional setting.
\medskip

We assume that  for some $0<\delta<1$
\begin{equation}\label{eq:fastmassdecay}
m_1=1\qquad\qquad \text{and }\qquad\qquad m_i\leq \delta e^{-i}\qquad\text{ for }i\geq 2.
\end{equation}
Fix any $N\in\mathbb N$ and introduce the mass scalar product for $u,v\in\mathbb R^N$
\[
u\cdot v=\sum_{i\leq N} m_i u_iv_i.
\] 
Then, for $\xi\in[1,2]^N$ we consider a real-analytic  Hamiltonian $H_\xi(\varphi,J):\mathbb T_\sigma^N\times B^N_\rho\to\mathbb C$ (here $B^N_\rho\subset\mathbb R^N$ is the ball of radius $\rho$ centered around the origin) of the form 
\begin{equation}\label{eq:findim}
H_\xi=\xi\cdot J+\frac 12 J\cdot J+\varepsilon\sum_{i\leq N}m_1m_i\cos(\varphi_1-\varphi_i)
\end{equation}
and study the flow it generates with respect to the symplectic form 
\[
\lambda=\sum_{i\leq N}m_i\mathrm dJ_i\wedge  \mathrm d\varphi_i.
\]
More concretely, we want to show the following. Let $\varrho>0$ and for any $j\in\mathbb N$ define the \textit{resonant parameter set}
\[
R_j(\varrho)=\{\xi\in [1,2]^N\colon |\xi_1-\xi_j|\leq \varrho\sqrt\varepsilon\}.
\]
\textbf{Claim:} \textit{Fix any $\varrho>0$ sufficiently small (independent of $N$) and suppose that $\bs m=(m_1,\dots,m_N)$ is as in \eqref{eq:fastmassdecay} with $0\leq \delta\ll \varepsilon$.}  

\textit{Then, for any $2\leq j\leq N$, if $\xi\in R_j(\varrho)$, the Hamiltonian \eqref{eq:findim} does not possess any  \textit{Lagrangian} invariant torus in the region $\{|I|\leq C\sqrt\varepsilon\}$ for some $C>0$ which is independent of $j$ and $N$.}
\medskip

From this claim it is easy to conclude that the set of parameters 
\[
\mathcal R=\{\xi\in[1,2]^N\colon H_\xi \text{ does not exhibit a Lagrangian invariant torus in } |I|\leq  C\sqrt\varepsilon\}
\]
has Lebesgue measure 
\[
\mathrm{Leb}(\mathcal R)\leq 1-(1-c\varrho\sqrt\varepsilon)^N
\]
for some $c>0$. Hence $\mathrm{Leb}(\mathcal R)\to 1$ as $N\to \infty$.

\subsection{(Partial) verification of the claim}
In this section we provide a partial justification of the claim above. We introduce the notation
\[
\hat\xi_j=(\xi_1,0,\dots,0,\xi_j,0,\dots,0)\qquad \hat J_j=(J_1,0,\dots,0,J_j,0,\dots,0)
\]
and 
\[
\hat \xi=\xi-\hat\xi_j\qquad\qquad \hat J=J-\hat J_j.
\]
Then, we rewrite \eqref{eq:findim} as 
\[
H_\xi=H_{\xi}^{(j)}+\hat\xi\cdot\hat J+\frac 12 \hat J\cdot\hat J+\varepsilon \sum_{i\neq j}m_1m_i\cos (\varphi_1-\varphi_i).
\]
with 
\[
H_\xi^{(j)}(\varphi_1,\varphi_j,J_1,J_j)=\hat \xi_j\cdot \hat J_j+\frac12
\hat J_j\cdot\hat J_j+\varepsilon m_j\cos(\varphi_1-\varphi_j).
\]
The basic observation is that, up to order $O(\varepsilon \delta)$, the dynamics in the variables $1,j$ is governed by the Hamiltonian $H_\xi^{(j)}$. Namely, for any $(\varphi,J)\in\mathbb T_\sigma^N\times B_\rho^N$
\begin{equation}\label{eq:vfield}
\dot\varphi_1=\xi_1+J_1\qquad \dot J_1=O(\varepsilon \delta)\qquad \dot \varphi_j=\xi_j+J_j\qquad\dot J_j=\varepsilon \sin(\varphi_1-\varphi_j)+O(\varepsilon \delta).
\end{equation}
We now introduce the  change of variables 
\[
\theta_1=\varphi_1\qquad \theta_j=\varphi_j-\varphi_1\qquad I_1=J_1+J_j\qquad\qquad I_j=J_j.
\]
In these coordinates the vector field \eqref{eq:vfield} reads
\begin{equation}\label{eq:nhimflow}
\dot\theta_1=\xi_1+I_1-I_j\qquad \dot I_1=O(\varepsilon\delta)  \qquad\dot\theta_j=(\xi_j-\xi_1)+2I_j-I_1 \qquad\dot I_j=-\varepsilon\sin\theta_j+O(\varepsilon\delta)
\end{equation}
For $\delta=0$ the vector field \eqref{eq:nhimflow} 
exhibits a two-dimensional normally hyperbolic cylinder
\[
\gamma_j=\{\theta_j=0,\ I_j=\frac 12(I_1+\xi_1-\xi_j)\colon (\theta_1,I_1)\in\mathbb T\times B\}\subset\mathbb T^2\times \mathbb R^2
\]
with stable and unstable manifolds $W^{s,u}(\gamma_j)$ which intersect at $\gamma_j$ forming an angle
\[
\angle(T_{\gamma_j} W^s(\gamma_j),T_{\gamma_j} W^u(\gamma_j))\gtrsim \sqrt\varepsilon.
\]
Hence, by the theory of normal hyperbolicity (see, for instance, \cite{MR501173}) one can show that there exists $\delta_0(\varepsilon)$ not depending\footnote{It is at this point that our argument is not fully rigorous. 
Indeed, showing that one can chose $\delta_0(\varepsilon)$ not depending on $N$ requires some (tedious but standard) additional work.} on $j,N$ such that for any 
\[
0<\delta\leq \delta_0(\varepsilon)
\]
\begin{itemize}
    \item there exists a normally hyperbolic manifold $\Lambda_j\subset \mathbb T^N\times B^N$ which can be parametrized as 
    \[
    \Lambda_j=\{\theta_j=\alpha_j(\theta_1,I_1,\hat\varphi,\hat J),\ I_j=\beta_j(\theta_1,I_1,\hat\varphi,\hat J)\colon (\theta_1,I_1)\in T\times B,\ (\hat\varphi,\hat J)\in\mathbb T^{N-2}\times B^{N-2}\}.
    \]
    for some functions 
    \[
    \alpha_j=O_{C^0}(\delta)\qquad\qquad \beta_j=\frac 12(I_1+\xi_1-\xi_j)+O_{C^0}(\delta).
    \]
    \item The invariant manifolds $W^{u,s}(\Lambda_j)$ are $O_{C^0}(\delta)$ perturbations of the manifolds $W^{s,u}(\gamma_j)\times(\mathbb T^{N-2}\times B^{N-2})$.
\end{itemize}
Notice that the manifolds $W^{u,s}(\Lambda_j)$ have codimension one. Moreover, for $|I_1|\leq C\sqrt\varepsilon$ and $|\xi_1-\xi_j|\leq \varrho \sqrt\varepsilon$ with $C,\varrho>0$ sufficiently small, these manifolds cross the section $\{J=0\}$. Hence, (possibly after decreasing even more $C$ and $\varrho$) any graph of the form
\[
\mathcal T=\{(\varphi,\Phi(\varphi))\colon \varphi\in\mathbb T^N\}
\]
which is contained in a $C\sqrt\varepsilon$ neighbourhood of the origin must intersect one of the manifolds $W^{u,s}(\Lambda_j)$. In particular, no Lagrangian invariant torus can be contained in a $C\sqrt\varepsilon$ neighbourhood of $\{J=0\}$.

\section{Main novelties and challenges}\label{sec:comparison}

One of the very first applications of KAM techniques to construct quasiperiodic invariant tori in high dimensional lattice systems appeared in the work of Wayne \cite{MR769350}. In this work the author studied short range perturbations of  integrable Hamiltonians, a prototipical example being given by (with $a\cdot b=\sum_i a_i b_i$)
\begin{equation}\label{eq:nearest}
H(\theta,I)=\tfrac{1}{2}\, I\cdot I+\varepsilon\sum_{i=1}^{N-1} \cos(\theta_{i+1}-\theta_i) \qquad\qquad (\theta,I)\in\mathbb T^N\times\mathbb R^N.
\end{equation}
The particular structure of the interaction suggests that for these systems, KAM tori might exist for values of $\varepsilon$ much larger than the threshold $(N!)^{-\alpha}$ (for some $\alpha>0$) implied by the classical estimates. Indeed, the dependence on the dimension enters the iterative KAM scheme through the small divisors, which get worse as the dimension increases. However, for \eqref{eq:nearest}, the small divisors (at least at the first iteration of the KAM scheme) involve only integer combinations in $\mathbb Z^2$ (instead of $\mathbb Z^N$). Exploiting this observation Wayne was able to show that for systems as in \eqref{eq:nearest}, KAM tori exist for $\varepsilon\sim N^{-\alpha}$ for some $\alpha>0$ large enough. 

Still within the framework of short range interactions, an elegant extension of these ideas for infinite dimensional systems  was later given by Pöschel under the framework of systems with \textit{spatial structure} \cite{PoschelSpatialstructure}. 

\black 

 The main novelty in Theorem \ref{thm:mainlagrangianprecise} is that the perturbation $P$ is of long range. Namely, one can see from equations \eqref{eq:vectorfieldnormalform} that the vector field does not decay with $n\to\infty$.  We have seen in Section \ref{sec:intro} (see also Section \ref{sec:particles}) that long range perturbations naturally appear when studying 
systems of interacting particles.

 Theorem \ref{thm:mainlagrangianprecise} can be seen as an extension of some of the ideas in \cite{PoschelSpatialstructure} to a system with long range interactions. Indeed,  \cite{PoschelSpatialstructure} deals with Hamiltonians of the form \eqref{eq:normalformLag} but requires $P$ to satisfy 
 \begin{equation}\label{eq:poschel}
\sum_{A\in \mathbb N_0} (m_A)^{-\beta}|P_A|_{\rho,\sigma}<\infty
 \end{equation}
 for some $\beta>1$ (and some $\rho,\sigma>0)$, {   while in our case $\beta=1$}. For  perturbations as in \eqref{eq:poschel}, one readily sees that 
 \[
\dot\varphi_j=\omega_j+\frac{1}{m_j}\partial_{J_j} P=\omega_j+\mathcal O(m_j^{\beta-1})\qquad\qquad \dot J_j=-\frac{1}{m_j}\partial_{\varphi_j} P=O(m_j^{\beta-1})
 \]
 \color{black}That is, the vector field decays exponentially fast (recall that $m_A=O(\exp(-\kappa\sum_{i\in A} i))$ \black ) as $n\to\infty$.  This, as we said before, fails in our case where $\beta=1$.
 
 \begin{rem}
Another classical related work dealing with lattice systems is \cite{Frohlichlocalization}. The interested reader is invited to check the introduction of that paper for a  survey on the theory of transport in anharomonic crystals and the role of KAM theory to explain the absence of transport.

However, from the functional setting viewpoint, in that work the interactions considered are of nearest neighbours. In particular, the vector field decays exponentially fast (as in \cite{PoschelSpatialstructure}). Hence, the techniques developed in that paper cannot be applied to deduce our main result, i.e. Theorem \ref{thm:mainlagrangianprecise}. 
 \end{rem}

\medskip 
The main difficulties in extending P\"oschel's theory to perturbations $P\in\mathcal Y_{\beta,\rho,\sigma}$ (recall that this functional space is defined in \eqref{eq:defnBanachspaceLag}) are the following:

\begin{itemize}[leftmargin=*]
    \item \textit{Action of the Poisson bracket.}  
    To carry out the KAM iteration in spaces of the form $\mathcal Y_{\beta,\rho,\sigma}$, one needs to show that the Poisson bracket  
    $\displaystyle
    \{h,g\}=\sum_{j\in\mathbb N} m_j^{-1}\big( \partial_{J_j} h\,\partial_{\varphi_j} g-\partial_{\varphi_j} h\,\partial_{J_j} g\big)
    $
    defines a bounded bilinear operator 
    \[
    \{\cdot,\cdot\}:\mathcal Y_{\beta,\rho,\sigma}\times\mathcal{Y}_{\beta,\rho,\sigma}\to \mathcal{Y}_{\beta,\rho',\sigma'}
    \]
    (with $0<\rho'<\rho$ and $0<\sigma'<\sigma$).  
    A similar statement is proved in \cite{PoschelSpatialstructure} for functions belonging to the Banach space induced by the norm \eqref{eq:poschel}.  
    However, since the norm defining $\mathcal Y_{\beta,\rho,\sigma}$ involves a supremum, a non-trivial modification of P\"oschel's argument is required.  

    \item \textit{Diophantine inequalities.}  
    The main reason why we do not expect (recall the discussion in Section \ref{sec:stochlayer})
    that Theorem \ref{thm:mainlagrangianprecise} holds for almost all $\xi\in[a,b]^{\mathbb N}$ (with respect to the product 
    measure) is that, due to the long range nature of the interactions, the KAM scheme requires Diophantine conditions that are \emph{uniform} in $j$ (the largest index appearing in each monomial).  
    For instance, one must impose an estimate of the form, for some $\tau>0$,
    \begin{equation}\label{eq:diophantineheuristic}
    |\xi_1 l+\xi_j k|\;\gtrsim\; |(l,k)|_1^{-\tau},
    \end{equation}
    valid for all $l,k\in\mathbb Z^2\setminus\{0\}$ and all $j\in\mathbb N$.  
    In other words, each small-divisor condition removes, a priori, a uniform amount of measure (there is no decay in $j$ in the lower bound of \eqref{eq:diophantineheuristic}).  
    To overcome this difficulty, we localize around frequencies  $\xi\in[a,b]^{\mathbb N}$ of box dimension strictly smaller than one.  
    In sufficiently small neighborhoods of such frequencies, the measure discarded in order to ensure summability of the conditions \eqref{eq:diophantineheuristic} over $j$ can be controlled (as well as the other Diophantine conditions required to control the small divisors, see Section \ref{sec:measureestimatesLag}).  
\end{itemize}
\vspace{0.2cm}

  Let us draw now a comparison between our results (in particular, the applications in Section \ref{sec:actionanglenormalform}) and the very recent work of Corsi, Gentile and Procesi \cite{RenormalizationGroupMaxtori}, where the authors establish the existence of full-dimensional tori in infinite-dimensional mechanical systems. Under suitable conditions on the interaction law (to be specified below), they employ the so-called \emph{tree formalism} (in place of the classical KAM scheme) to construct full-dimensional tori whose frequencies satisfy a generalized Bryuno condition.  

More precisely, given a sequence $\bs h=\{h_j\}$ with $h_j\to \infty$ and some $q>0$, define
\[
\mathbb T_{\sigma,\bs h}=\Big\{\theta\in (\mathbb C/2\pi\mathbb Z)^\mathbb Z : |\operatorname{Im}\theta_j|\leq \sigma h_j\Big\}, 
\qquad 
\ell_{\infty,q}=\Big\{I=\{I_j\}: I_j\in\mathbb R,\ \sup_j |I_j|(1+|j|)^q<\infty\Big\}.
\]
They study the Hamiltonian (with $a\cdot b=\sum_i a_i b_i$)
\[
H(\theta,I)=\tfrac{1}{2}\, I\cdot I+ \varepsilon \black f(\theta), 
\qquad\qquad (\theta,I)\in \mathbb T_{\sigma,\bs h}\times \ell_{\infty,q},
\]
where $f$ admits a holomorphic and bounded extension to $\mathbb T_{\sigma,\bs h}$.  
Typical choices for $\bs h=\{h_j\}$ include $h_j=\log^s(1+|j|)$ with $s>1$ or $h_j=\langle j\rangle^\alpha$ with $\alpha>0$.  In this framework, the vector field
\begin{align*}
\dot \theta &= I, \\
\dot I &= -\varepsilon \partial_\theta f(\theta)
\end{align*}
exhibits strong decay as $j\to\infty$. Indeed, any monomial in $f$ such that $\partial_{\theta_j} f\neq 0$ has size bounded above by $O\big(\exp(-\sigma |h_j|)\big)$. Notice that, in particular, in the naturally scaled coordinates $\widetilde I\in\ell_\infty$ defined by $I_j=(1+|j|)^{-q}\widetilde I_j$ the ratio (compare to the case of long range perturbations in Definition \ref{defn:longrange}, in particular see \eqref{eq:freqpertratio})
\[
\frac{(\text{peturbation})_j}{(\text{unperturbed frequency})_j}\lesssim \varepsilon (1+|j|)^{2q} \exp(-\sigma|h_j|) 
\]
decays at superpolynomial speed as $j\to \infty$.
Therefore, their setting crucially relies on the rapid decay encoded in the sequence $\bs h$, and their results do not appear to cover, at least \emph{a priori}, the case of mechanical Hamiltonians of the type considered in \eqref{eq:fulldimintrolag0}, \eqref{eq:firstHamnew}, or \eqref{eq:firstHam}. On the other hand, the rapid decay they impose allows them to obtain positive measure sets of KAM tori. 

 Recently \cite{tong2025sharpregularityfulldimensionaltori} used similar norms (extended to the Gevrey and smooth category, but again with a corresponding rapid decay in the coordinates' indices) to prove in all regularities an infinite dimensional version of Arnold's theorem on local linearization of Diophantine toral translations. The choice of the norms imposes a spatial structure on the perturbation that forces the dependence on far away indices to decay very rapidly, which allows them to address the small denominators and get linearization for full measure sets of infinite frequency vectors. \black

\bigskip

Finally, comparing our results with KAM results for PDEs is less straightforward. The main differences are the following:

\begin{itemize}
    \item \textit{Structure of the perturbation.}  
    The perturbation in \eqref{eq:normalformLag}  involves (small) parameters, namely the mass vector $\boldsymbol m\in \ell^\mathbb R_{\infty,\kappa}$, whereas in normal forms around an elliptic fixed point, smallness of the perturbation is typically tied to the distance to the fixed point itself.  

    To illustrate this, consider the one-dimensional quintic NLS  with convolution potential \black
    \begin{equation}\label{eq:nlstext}
    H(z,\bar z)=\sum_{n\in\mathbb Z}(n^2+V_n) z_n\bar z_n
    +\varepsilon \!\!\!\sum_{n_1-n_2+n_3-n_4+n_5-n_6=0}\!\! z_{n_1}\bar z_{n_2}z_{n_3}\bar z_{n_4}z_{n_5}\bar z_{n_6}.
    \end{equation}
    One may consider initial data $\{(z_n,\bar z_n)\}_{n\in\mathbb Z}\in \ell_{\infty,\sqrt{\bs m}}^2$\footnote{We abuse notation and write $\ell_\infty$ also for the space of bounded bi-infinite sequences.} for some $\bs m\in \ell^\mathbb R_{\infty,\kappa}$.  
    The change of variables
    \[
    z_n=\sqrt{m_n}\,u_n, \qquad \bar z_n=\sqrt{m_n}\,\bar u_n,
    \]
    conjugates the flow of \eqref{eq:nlstext} on the symplectic manifold 
    \[
    \big(\ell_{\infty,\sqrt{\bs m}}^2,\; i\sum_n \mathrm{d}z_n\wedge \mathrm{d}\bar z_n\big)
    \]
    to that of a Hamiltonian
    \[
    \widetilde H(u,\bar u)=\Omega(V)\,u\cdot \bar u+\varepsilon P
    \]
    on the symplectic phase space
    \[
    \big(\ell_\infty^2,\; i\sum_n m_n \,\mathrm{d}u_n\wedge \mathrm{d}\bar u_n\big),
    \]
    where $\Omega(V)=\mathrm{diag}(\dots,n^2+V_n,\dots)$ and
    \begin{equation}\label{eq:perturbationnlsscaled}
    P=\!\!\!\sum_{n_1-n_2+n_3-n_4+n_5-n_6=0}\!\!\!
    \sqrt{m_{n_1}m_{n_2}m_{n_3}m_{n_4}m_{n_5}m_{n_6}}\;
    u_{n_1}\bar u_{n_2}u_{n_3}\bar u_{n_4}u_{n_5}\bar u_{n_6}.
    \end{equation}
    As shown in Lemma 1.1 of \cite{Bourgainfulldim}, the restriction $n_1-n_2+n_3-n_4+n_5-n_6=0$ implies that, for the choice $m_n=\exp(-\sqrt{|n|})$, all monomials in \eqref{eq:perturbationnlsscaled} satisfy
    \begin{equation}\label{eq:bourgainlemma}
    \sqrt{m_{n_1}m_{n_2}m_{n_3}m_{n_4}m_{n_5}m_{n_6}}
    \;\leq\; m_{n_1^*}\,\prod_{i\geq 3} m_{n_i^*}^{1/4},
    \end{equation}
    where $\{n_i^*\}$ denotes the decreasing rearrangement of $\{|n_i|\}$.  

    The estimate \eqref{eq:bourgainlemma} does not fit into Pöschel's framework (restricted to short range interactions). In fact, Bourgain notes in \cite{Bourgainfulldim} that an estimate of the form \eqref{eq:bourgainlemma} with the right-hand side replaced by $m_{n_1^*}^{1+\delta}$ would suffice for the techniques of \cite{PoschelSpatialstructure}, since it would imply that the vector field at mode $n$ decays as $m_n^\delta$. However, this stronger estimate does not follow from \eqref{eq:bourgainlemma}.  

    Our own framework also does not accommodate \eqref{eq:bourgainlemma}, as we would require an estimate of the form
    \[
    m_{n_1^*}\,\prod_{i\geq 2} m_{n_i^*}^\beta
    \]
    for some $\beta>0$. Bourgain circumvents this issue in \cite{Bourgainfulldim} by exploiting additional arithmetic features of the model, specifically, the so-called \emph{regularizing effect of the resonant monomials} to establish a KAM theorem using only \eqref{eq:bourgainlemma}.    Recently, Cong \cite{MR4800925} has been able to construct orbits of \eqref{eq:nlstext} which decay like $|z_n|\asymp \exp(-\log^\sigma |n|)$ with $\sigma>2$ as $|n|\to\infty$.\black

    \item \textit{Parameter space.}  
    As will be shown in Section \ref{sec:actionanglenormalform}, amplitude
    frequency modulation for (non-degenerate)  integrable \black systems of interacting particles reduces the Hamiltonian to a normal form with external parameters ranging over a domain of the form $[a,b]^\mathbb N$. By contrast, amplitude-frequency modulation around an elliptic fixed point is far more restrictive, as there is much less room for modulation.  
    In PDE settings it is therefore customary to modify the equation and introduce parameters via a convolution potential (see \cite{Bourgainfulldim}). 
    On the other hand, the fact that in \eqref{eq:nlstext} the unperturbed frequencies grow like $n^2$ is certainly advantageous. For instance, lower bounds on small divisors like those in \eqref{eq:diophantineheuristic} are automatic since, in this  case, $\xi_1\sim 1$ and $\xi_j\sim j^2$.
    
   The construction of finite dimensional invariant tori without external parameters was first achieved by Kuksin and Pöschel in \cite{MR1370761}. Quite recently  Bernier, Grébert and Robert have been able to extend the amplitude-frequency modulation  technique to establish the existence  of infinite-dimensional tori for the NLS in the circle and withouth external parameters \cite{bernier2025infinitedimensionalinvarianttori}.  
\end{itemize}


\section{Functional setting}\label{sec:functionalsetting}

 Together with the choice of suitable lower bounds on the small divisors, one of the most important parts of the KAM scheme is to understand the action of the Poisson bracket operator  in the functional space in which we want to run the iteration, i.e. $\mathcal Y_{\beta,\rho,\sigma,O}$.  This is what we do in  Section \eqref{sec:poissonbracket}.

\subsection{The Poisson bracket operator}\label{sec:poissonbracket}
The following result  is key to run the KAM iteration on a scale of Banach spaces of the form $\mathcal{Y}_{\beta,\rho,\sigma,O}$. It  shows that, for any $\rho'<\rho$ and $\sigma'<\sigma$, the bilinear operator $\{\cdot,\cdot\}:\mathcal Y_{\beta,\rho,\sigma,O}\times\mathcal{Y}_{\beta,\rho,\sigma,O}\to \mathcal{Y}_{\beta,\rho',\sigma',O}$, given by
\begin{equation}\label{eq:defnPoissonbracket}
\{h,g\}=\sum_{j\in\mathbb N} m_j^{-1}\left(\partial_{\theta_j} h\partial_{I_j} g-\partial_{I_j} h\partial_{\theta_j} g
\right)
\end{equation}
 is bounded. 

\begin{lem}\label{lem:bracketlemma}
Let $0<\rho\leq \rho_*$, $0<\sigma\leq \sigma_*$ and take $h\in\mathcal{Y}_{\beta,\rho,\sigma,O}$ and $g\in\mathcal{Y}_{\beta,\rho_*,\sigma_*,O}$. Then, $\{h,g\}\in\mathcal{X}_{\beta,\rho-r,\sigma-s,O}$ with 
\[
\lVert \{h,g\}\rVert_{\beta,\rho-r,\sigma-s,O}\lesssim C(r,s,\rho,\sigma,\rho_*,\sigma_*,g)\lVert h\rVert_{\beta,\rho,\sigma,O} \lVert g\rVert_{\beta,\rho_*,\sigma_*,O}.
\]
for
\[
C(r,s,\rho,\sigma,\rho_*,\sigma_*):=\max\{r^{-1}(\sigma_*-\sigma+s)^{-1},s^{-1}(\rho_*-\rho+r)^{-1}\}
\]
Moreover, if
\begin{equation}\label{eq:compositionnecessary}
C(r,s,\rho,\sigma,\rho_*,\sigma_*) \lVert g\rVert_{\beta,\rho_*,\sigma_*,O}\leq \frac 12,
\end{equation}
for any $t\in[0,1]$ we have that $h\circ\phi^t_g\in\mathcal Y_{\beta,\rho-r,\sigma-s,O}$ and 
\[
\lVert h\circ\phi^t_g \rVert_{\beta,\rho-r,\sigma-s,O}\leq (1+2C\lVert g\rVert_{\beta,\rho_*,\sigma_*,O})\lVert h\rVert_{\beta,\rho,\sigma,O}.
\]
\end{lem}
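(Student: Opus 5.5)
The plan is to work monomial-by-monomial through the decomposition $h=\sum_A h_A$, $g=\sum_B g_B$, and to track how the support structure behaves under the bracket. The key combinatorial fact is that $\{h_A,g_B\}$ vanishes unless $A\cap B\neq\emptyset$. When it does not vanish, it is supported in $C=A\cup B$, and the sum over $j$ in \eqref{eq:defnPoissonbracket} runs only over $j\in A\cap B$. For each such $j$ I will use the standard Cauchy estimates in the weighted $\ell^1$-Fourier–Taylor norm $|\cdot|_{\rho,\sigma}$. The $\theta_j$-derivative costs a factor $e^{-1}/(\text{loss in }\sigma)$ and the $I_j$-derivative costs $1/(\text{loss in }\rho)$. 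Because $h$ lives on $(\rho,\sigma)$ and $g$ on $(\rho_*,\sigma_*)$, the losses are distributed asymmetrically: on $h$ we lose $r$ or $s$, and on $g$ we lose $\rho_*-\rho+r$ or $\sigma_*-\sigma+s$. This gives
\[
|\{h_A,g_B\}|_{\rho-r,\sigma-s}\lesssim C\sum_{j\in A\cap B} m_j^{-1}|h_A|_{\rho,\sigma}|g_B|_{\rho_*,\sigma_*}.
\]
The point is that the single-coordinate derivative picks up only one factor, not a factor $|A\cap B|$, since the $\ell^1$ norm is taken with respect to the monomial.

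The main step, and the main obstacle, is summing these estimates in the supremum-type norm \eqref{eq:2normLag}. Fix $k$ and consider all pairs with $\overline{A\cup B}=k$. Then $k=\overline A$ or $k=\overline B$, and by symmetry I treat $k=\overline B$. The weight needed is $m_k^{-1}m_{\underline C}^{-\beta}$. Since $C\subset A\cup B$ and $m_i\le 1$, we have $m_{\underline C}^{-\beta}\le m_{\underline B}^{-\beta}\,m_{A\setminus B}^{-\beta}$. For $j\in A\cap B$ with $j\neq k$, the factor $m_j^{-1}$ can be split as $m_j^{-1}=m_j^{-\beta}\cdot m_j^{\beta-1}$. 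I then want to absorb $m_{A}$-weights so that I obtain
\[
m_{\overline A}^{-1}m_{\underline A}^{-\beta}|h_A|\quad\text{and}\quad m_k^{-1}m_{\underline B}^{-\beta}|g_B|.
\]
The decisive inequality is that, for $j\in A\cap B$, one has $m_j^{-1}m_{A\setminus B}^{-\beta}\,m_{\underline C}^{-\beta}\le m_{\overline A}^{-1}m_{\underline A}^{-\beta}\,m_{\underline B}^{-\beta}\cdot m_j^{\beta}$, or something similar. This works because $j\le \overline A$ and the masses are decreasing, which gives $m_j\ge m_{\overline A}$, while $j$ appears in both $\underline A$-type and $\underline B$-type weights. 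The argument then proceeds as follows.

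\begin{enumerate}
\item Sum over $A\ni j$ using $\|h\|$ at level $\overline A$. Each level $\ell\ge j$ contributes at most $m_\ell\|h\|$, times a leftover weight $m_\ell^{\beta}\cdot$. The sum over $\ell$ then converges geometrically thanks to $\bs m\in\ell^{\mathbb R}_{\infty,\kappa}$, giving a bound $\lesssim m_j^{\beta}\|h\|$ up to constants depending on $\kappa,\beta$.
\item Sum over $j\in B$ against $|g_B|$ weighted by $m_{\underline B}^{-\beta}$, which is bounded by $m_k\|g\|$.
\end{enumerate}

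The case $j=k$ (i.e. $k\in A$ as well) is handled identically with the roles swapped. If this bookkeeping fails because of a stray $m_j^{\beta-1}$, I would fall back on the fact that $j\in\underline B$ already supplies a factor $m_j^{-\beta}$ in the weight of $g$. That trades $m_j^{-1}$ into $m_j^{-(1-\beta)}$, which is compensated by $m_{\overline A}\le m_j$ from $h$'s normalization.

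For the second statement, I expand the flow using the Lie series $h\circ\phi^t_g=\sum_{n\ge0}\frac{t^n}{n!}\mathrm{ad}_g^n h$. I estimate $\mathrm{ad}_g^n h$ by iterating the first part on $n$ nested domains, each shrinking by $r/n$ and $s/n$. Only the $h$-side domain shrinks, while $g$ stays on $(\rho_*,\sigma_*)$, which is why the constant involves $\rho_*-\rho+r$. This yields $\|\mathrm{ad}_g^n h\|\le (n\,C\|g\|)^n\|h\|$ up to absolute constants. The $n!$ together with Stirling ($n^n/n!\le e^n$) then gives a geometric series, which converges under \eqref{eq:compositionnecessary} (possibly after adjusting the absolute constant $1/2$). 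Summing from $n\ge1$ produces the bound $(1+2C\|g\|)\|h\|$. Real-analyticity in $\xi$ and membership in $\mathcal H$ follow from uniform convergence.
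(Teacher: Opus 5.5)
There is a genuine gap in the first part, in how you sum over the shared index $j\in A\cap B$.

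Your pairwise bound $|\{h_A,g_B\}|_{\rho-r,\sigma-s}\lesssim C\sum_{j\in A\cap B}m_j^{-1}|h_A|_{\rho,\sigma}|g_B|_{\rho_*,\sigma_*}$ is true. However, it detaches the derivative index $j$ from the Fourier--Taylor indices of the monomials, and no bookkeeping of masses can recover from that. To see this, take $h=\rho^{-1}\sum_j m_jJ_j$, so that $\lVert h\rVert_{\beta,\rho,\sigma}=1$ (this is precisely a term of type $\xi\cdot J$), and take $g=g_B$ to be a single block with $\overline B=k$. For $A=\{j\}\subset B$ you get exactly $m_k^{-1}m_{\underline B}^{-\beta}\,m_j^{-1}|h_{\{j\}}|_{\rho,\sigma}|g_B|_{\rho_*,\sigma_*}=\lVert h\rVert\,\lVert g\rVert$, with no extra factor $m_j^{\beta}$.

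Consequently, the claim in your step (1) that the sum over $A\ni j$ is $\lesssim m_j^\beta\lVert h\rVert$ is false. The leftover at level $\overline A=\ell$ is at most $(m_\ell/m_j)^{1-\beta}$, which is what your fallback finds, and it equals $1$ for $A=\{j\}$. Step (2) then produces a factor $|B|$ at level $k$, which is unbounded. Replacing $\sum_{j\in A\cap B}$ by $\max_{j\in A\cap B}$ does not help, by the same example.

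Yet the actual bracket is $\{h,g_B\}=-\rho^{-1}\sum_{j\in B}\partial_{\varphi_j}g_B$, whose $|\cdot|_{\rho-r,\sigma-s}$-norm is $\lesssim\rho^{-1}(\sigma_*-\sigma+s)^{-1}|g_B|_{\rho_*,\sigma_*}$. So the sum over $j$ has to be paid for by the Cauchy estimate itself, not by the masses. Your remark that the derivative "picks up only one factor" points in this direction, but neither the displayed bound nor steps (1)--(2) implement it.

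The missing step is to keep $j$ attached to the monomials of the block carrying the top index, which in your case is $g_B$. For a monomial $J^\alpha e(l\varphi)$ of $g_B$, the $j$-th derivatives cost $|l_j|$ and $\alpha_j$.

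First, for fixed $j$, sum over all $A$ by levels $\overline A=\ell\in[j,k]$. Use the uniform per-$j$ Cauchy bounds $s^{-1}$ and $r^{-1}$ on $h_A$ together with the leftover $(m_\ell/m_j)^{1-\beta}$; this gives $\lesssim\lVert h\rVert$. Only afterwards sum over $j$, using $\sum_j|l_j|e^{-|l|_1t}=|l|_1e^{-|l|_1t}\le(et)^{-1}$ with $t=\sigma_*-\sigma+s$, and $\sum_j\alpha_j(\rho-r)^{|\alpha|_1-1}\le\rho_*^{|\alpha|_1}/(\rho_*-\rho+r)$.

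This is exactly the paper's argument for $f_1$, with the roles of $h$ and $g$ exchanged. There the top block is $h_A$, the sum $\sum_{q\in\mathrm{supp}\,l}|l_q|e^{-|l|_1s}\le(es)^{-1}$ runs over its monomials, and $g$ is summed over $\overline B=q+n$ with weights $(m_{q+n}/m_q)^{1-\beta}$.

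Note also that summing these ratios requires geometric decay of $m_\ell/m_j$ in $\ell-j$. This does not follow from $\bs m\in\ell^{\mathbb R}_{\infty,\kappa}$ alone (the paper uses it implicitly as well). Your appeal to $\ell^{\mathbb R}_{\infty,\kappa}$ only covers the incorrect absolute leftover $m_\ell^{\beta}$.

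The Lie-series part of your proposal is the same as the paper's.
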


\begin{proof}
Since
 the set $O\subset \mathbb C^\mathbb N$ does not play any role, during this proof we omit the dependence on it. We write $\{h,g\}=f_1+f_2$ with
\[
f_1=\sum_{\overline B\leq \overline A}\{h_A,g_B\},\qquad f_2=\sum_{\overline A< \overline B}\{h_A,g_B\}.
\]
Now, for any $j\in\mathbb{N}$ we define
\[
f_{1,j}=\sum_{\overline A=j} \bigg\{h_A,\sum_{\overline B\leq \overline A} g_B \bigg\}=\sum_{\overline A=j} \ \sum_{(\alpha,l)\in \mathcal{A}} h^{[l]}_{\alpha} \ \bigg\{I^\alpha e(l\theta),\sum_{\overline B\leq \overline A} g_B\bigg\}
\]
so $f_1=\sum_{j\in\mathbb N} f_{1,j}$. Notice moreover that 
\[
\lVert f_1 \rVert_{\beta,\rho-r,\sigma-s}\leq  \sup_{j\in\mathbb N} \ \lVert f_{1,j}\rVert_{\beta,\rho-r,\sigma-s}.
\]
We now estimate $\lVert f_{1,j}\rVert_{\beta,\rho-r,\sigma-s}$. To that end, note that
\begin{align*}
\bigg\{I^\alpha e(l\theta),\sum_{\overline B\leq \overline A} g_B\bigg\}=& \sum_{q\in\mathrm{supp} l} m_q^{-1} l_q \sum_{n=0}^{\overline A-q}\sum_{\overline B=q+n} I^\alpha e(l\theta) \partial_{I_q} g_B 
-  \sum_{q\in\mathrm{supp} \alpha} m_q^{-1}\alpha_q \sum_{n=0}^{\overline A-q}\sum_{\overline B=q+n} I^{\alpha-\mathbbm{1}_q}\  e(l\theta) \partial_{\theta_q} g_B.
\end{align*}
Hence,
\[
\lVert f_{1,j}\rVert_{\beta,\rho-r,\sigma-s}\leq C+D
\]
where 
\begin{align*} 
C=&\frac{1}{m_j}\ \sum_{\overline A=j} \sum_{(\alpha,l)\in \mathcal{A}} |h^{[l]}_{\alpha}| \sum_{q\in\mathrm{supp} l} m_q^{-1}|l_q| \sum_{n=0}^{\overline A-q}\sum_{\overline B=q+n} (m_{\underline A\cup B})^{-\beta} |I^\alpha  e(l\theta) \ \partial_{I_q} g_B|_{\rho-r,\sigma-s}\\
D=&\frac{1}{m_j}\ \sum_{\overline A=j} \sum_{(\alpha,l)\in \mathcal{A}} |h^{[l]}_{\alpha}| \sum_{q\in\mathrm{supp} \alpha} m_q^{-1}\alpha_q \sum_{n=0}^{\overline A-q}\sum_{\overline B=q+n} (m_{\underline A\cup B})^{-\beta} |I^{\alpha -\mathbbm{1}_q}\ e(l\theta) \partial_{\theta_q} g_B|_{\rho-r,\sigma-s}
\end{align*}
 and all summations in the proof of the first part of the lemma are restricted to 
$\overline B\leq \overline A$. Next,
\begin{align*}
|I^\alpha\  e(l\theta) \partial_{I_q} g_B|_{\rho-r,\sigma-s}\leq& |I^\alpha\  e(l\theta)|_{\rho-r,\sigma-s} | \partial_{I_q} g_B|_{\rho-r,\sigma-s}\\
=&\rho^{|\alpha|_1} e^{|l|_1(\sigma-s)} \ \frac{1}{\rho_*-(\rho-r)}|g_B|_{\rho_*,\sigma_*},
\end{align*}
and, by a similar argument,
\[
|I^{\alpha -\mathbbm{1}_r} \ e(l\theta) \partial_{\theta_r} g_B|_{\rho-r,\sigma-s}\lesssim \frac{1}{\sigma_*-(\sigma-s)} (\rho-r)^{|\alpha|_1} e^{|l|_1\sigma}\ |g_B|_{\rho_*,\sigma_*}.
\]
On the other hand, for $\overline B\leq \overline A$ satisfying $\{q\}\in A\cap B$
\[
m_{\underline {A\cup B}}= m_{\underline A} m_{\underline B}\ \frac{m_{\overline B}}{m_{A\cap B}}
\gtrsim
m_{\underline A} m_{\underline B}\ \frac{m_{\overline B}}{m_{q}}.
\]
Therefore,
\begin{align*}
  (\rho_*-\rho+r)  C\leq &\frac{1}{m_j}\ \sum_{\overline A=j} (m_{\underline A})^{-\beta} \sum_{(\alpha,l)\in \mathcal{A}} |h^{[l]}_{\alpha}| \rho^{|\alpha|_1} e^{|l|_1\sigma} \sum_{q\in\mathrm{supp} l} |l_q| e^{-|l|_1s}\\
    &\times\sum_{n=0}^{\overline A-q} \left(\frac{m_{q+n}}{m_q}\right)^{1-\beta}\ \frac{1}{m_{q+n}}\sum_{\overline B=q+n} (m_{\underline B})^{-\beta} |g_B|_{\rho_*,\sigma_*}\\
    \leq & \lVert g\rVert_{\beta,\rho_*,\sigma_*}\ \frac{1}{m_j}\ \sum_{\overline A=j} (m_{\underline A})^{-\beta} \sum_{(\alpha,l)\in \mathcal{A}} |h^{[l]}_{\alpha}| \rho^{|\alpha|_1} e^{|l|_1\sigma} \sum_{q\in\mathrm{supp} l} |l_q| e^{-|l|_1 s}   \sum_{n=0}^{\overline A-q} \left(\frac{m_{q+n}}{m_q}\right)^{1-\beta}\\
    \lesssim & \lVert g\rVert_{\beta,\rho_*,\sigma_*}\ \frac{1}{m_j}\ \sum_{\overline A=j} (m_{\underline A})^{-\beta} \sum_{(\alpha,l)\in \mathcal{A}} |h^{[l]}_{\alpha}|\rho^{|\alpha|_1} e^{|l|_1\sigma} \sum_{q\in\mathrm{supp} l} |l_q| e^{-|l|_1s}\\
    \lesssim & \lVert g\rVert_{\beta,\rho_*,\sigma_*}\ \frac{1}{m_j}\ \sum_{\overline A=j} (m_{\underline A})^{-\beta} \sum_{(\alpha,l)\in \mathcal{A}} |h^{[l]}_{\alpha}|\rho^{|\alpha|_1} e^{|l|_1\sigma} \sup_{l\in\mathbb{Z}^{|A|}} |l|_1 e^{-|l|_1s}\\
    \lesssim & \frac{1}{es}\ \lVert g\rVert_{\beta,\rho_*,\sigma_*}\ \frac{1}{m_j}\ \sum_{\overline A=j} (m_{\underline A})^{-\beta} \sum_{(\alpha,l)\in \mathcal{A}} |h^{[l]}_{\alpha}| \rho^{|\alpha|_1} e^{|l|_1\sigma} \lesssim \frac{1}{es}\ \lVert g\rVert_{\beta,\rho_*,\sigma_*} \lVert h\rVert_{\beta,\rho,\sigma}.
\end{align*}
A similar argument shows that 
\[
D\lesssim \max\{r^{-1}(\sigma_*-\sigma+s)^{-1},s^{-1}(\rho_*-\rho+r)^{-1}\}\ \lVert g\rVert_{\beta,\rho_*,\sigma_*} \lVert h\rVert_{   \beta, \black \rho,\sigma},
\]
and we conclude that 
\[
\lVert f_1 \rVert_{\beta,\rho-r,\sigma-s}\lesssim \max\{r^{-1}(\sigma_*-\sigma+s)^{-1},s^{-1}(\rho_*-\rho+r)^{-1}\}  \lVert g\rVert_{\beta,\rho_*,\sigma_*}\lVert h\rVert_{\beta,\rho,\sigma}.
\]
The first part of the lemma follows since a similar argument shows that 
\[
\lVert f_2 \rVert_{\beta,\rho-r,\sigma-s}\lesssim \max\{r^{-1}(\sigma_*-\sigma+s)^{-1},s^{-1}(\rho_*-\rho+r)^{-1}\} \lVert g\rVert_{\beta,\rho_*,\sigma_*}\lVert h\rVert_{\beta,\rho,\sigma}.
\]
We now establish the second part of the lemma. We write 
\[
h\circ\phi_g=\sum_{n=0}^\infty \frac{1}{n!}\mathrm{ad}^n_{g} h\qquad\qquad\text{where}\qquad \mathrm{ad}^n_{g} h=\{\mathrm{ad}^{n-1}_{g} h,g\}
\]
Applying the first part of the lemma recursively to relate
$\lVert \mathrm{ad}^j_{g} h\rVert_{\beta,\rho-(jr)/n,\sigma-(js/n)}$ to\\
$\lVert \mathrm{ad}^{j-1}_{g} h\rVert_{\beta,\rho-((j-1)r)/n,\sigma-((j-1)s/n)}$ we obtain
\[
\lVert \mathrm{ad}^n_{g} h\rVert_{\beta,\rho-r,\sigma-s}\lesssim \left(n\ \max\{r^{-1}(\sigma_*-\sigma+s)^{-1},s^{-1}(\rho_*-\rho+r)^{-1}\} \lVert g\rVert_{\beta,\rho_*,\sigma_*}\right)^n \lVert h \rVert_{\beta,\rho,\sigma}.
\]
So 
\begin{align*}
\lVert  h\circ \phi_g-h\rVert_{\beta,\rho-r,\sigma-s}\lesssim &\lVert  h\rVert_{\beta,\rho,\sigma}  \sum_{n=1}^\infty 
\frac{1}{n!}\left(n\ \max\{r^{-1}(\sigma_*-\sigma+s)^{-1},s^{-1}(\rho_*-\rho+r)^{-1}\} \lVert g\rVert_{\beta,\rho_*,\sigma_*}\right)^n\\
\lesssim &\lVert  h\rVert_{\beta,\rho,\sigma}  \sum_{n=1}^\infty 
\left(e^{-1} \max\{r^{-1}(\sigma_*-\sigma+s)^{-1},s^{-1}(\rho_*-\rho+r)^{-1}\} \lVert g\rVert_{\beta,\rho_*,\sigma_*}\right)^n
\end{align*}
\hskip38mm
$\displaystyle \lesssim \lVert C(r,s,\rho,\sigma,\rho_*,\sigma_*) h\rVert_{\beta,\rho,\sigma}\lVert g\rVert_{\beta,\rho_*,\sigma_*}$
\end{proof}

 Lemma \ref{lem:bracketlemma} can be seen as an extension of Lemmas C.1 and C.2 in \cite{PoschelSpatialstructure} to the limiting case given by the functional setting \eqref{eq:defnBanachspace}. The fact that \eqref{eq:2normLag} involves a supremum makes the proof slightly more involved compared to the case in  \cite{PoschelSpatialstructure}.

\subsection{Frequency transformations}

As it is well known to experts in perturbation theory, during the KAM iteration one needs (in general) to adjust the frequency vector at each step of the scheme. To that end, given an open subset $O\subset\ell_\infty$, we define the Banach space of real-analytic mappings 
\begin{equation}\label{eq:frequencydomains}
\mathtt B_{O}:=\mathcal H(O,\ell_\infty)=\{\nu:O\to\ell_\infty\colon \nu\text{ is real-analytic and }|\nu|_O<\infty\}
\end{equation}
where
\[
|\nu|_O= \sup_{\omega\in \overline O} |\nu(\omega)|_\infty.
\]
We also introduce the following Lipschitz semi-norm
\[
|\nu|_{Lip,O}=\sup_{\omega,\omega_*\in O} \frac{|(\nu)(\omega)-(\nu)(\omega*)|_\infty}{|\omega-\omega_*|_\infty}
\]
The following trivial observation  will be used repeatedly.
\begin{lem}\label{lem:trivialestimatefreq}
Let $h\in \mathcal X_{\beta,\rho,\sigma,O}$. Then $\nu(h)=(\nu_1(h),\dots,\nu_j(h),....)$ given by 
\[
\nu_j(h)=h^{[0]}_{\mathbbm 1_j,0,0}\qquad\qquad j\in\mathbb N
\]
satisfies
$\displaystyle
|\nu(h)|_O\leq \frac{1}{\rho} \lVert h\rVert_{\beta,\rho,\sigma,O}.
$
\end{lem}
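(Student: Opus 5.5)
We need to prove Lemma \ref{lem:trivialestimatefreq}: $|\nu_j(h)|\le \rho^{-1}\lVert h\rVert$ uniformly in $j$ and $\xi\in\overline O$. The estimate is pointwise in $\xi$ and the supremum over $\overline O$ is taken only at the end, so it suffices to fix $\xi\in\overline O$ and work with $h=h(\cdot,\xi)\in\mathcal Y_{\beta,\rho,\sigma}$.

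The coefficient $h^{[0]}_{\mathbbm 1_j}$ multiplies the monomial $J_j$, i.e. $l=0$ and $\alpha=\mathbbm 1_j$. Its support is $\mathrm{supp}(|l|+\alpha)=\{j\}$, so it is part of the block $h_A$ with $A=\{j\}$. For this block $\overline A=j$ and $\underline A=\emptyset$, hence $m_{\underline A}=1$. From the definition of $|h_A|_{\rho,\sigma}$, a single term of the sum is bounded by the whole sum, giving
\[
|h^{[0]}_{\mathbbm 1_j}|\,\rho \le |h_{\{j\}}|_{\rho,\sigma}.
\]
From the definition \eqref{eq:2normLag}, again dropping all terms with $\overline A=j$ except $A=\{j\}$,
\[
|h_{\{j\}}|_{\rho,\sigma}\le m_j\lVert h\rVert_{\beta,\rho,\sigma}.
\]
Combining the two bounds gives $|\nu_j(h)|\le \rho^{-1}m_j\lVert h\rVert_{\beta,\rho,\sigma}$.

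Since $m_j\le m_1$ and $m_1$ may be normalized to be at most $1$ (or absorbed into the implicit constant), taking the supremum over $j$ and then over $\xi\in\overline O$ yields the claim. This in fact gives the stronger decay $|\nu_j|\lesssim m_j$.

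Two points need checking. First, real-analyticity of $\nu$ in $\xi$: each coefficient is a Cauchy integral of $h$, so it depends analytically on $\xi$. Second, the normalization of the monomial $J_j$ against the symplectic weight. With the mass product, the frequency correction should really be $m_j^{-1}\nu_j$, and this is bounded by $\rho^{-1}\lVert h\rVert$ by exactly the same two-line argument. The only potential obstacle is this bookkeeping of the factor $m_j$. Whichever convention is intended, the bound $|h^{[0]}_{\mathbbm 1_j}|\le \rho^{-1}m_j\lVert h\rVert$ covers both.
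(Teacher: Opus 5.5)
Your argument is correct and is essentially the only natural one; the paper gives no proof, calling the lemma a ``trivial observation''. Isolating the block $A=\{j\}$, where $m_{\underline A}=1$, gives $|h^{[0]}_{\mathbbm 1_j}|\le \rho^{-1}m_j\lVert h\rVert_{\beta,\rho,\sigma}$. Your caveat about $m_j$ is genuinely needed: Lemma \ref{lem:transffrequbddLag} applies the estimate to the rescaled quantity $(\tilde v_{n+1})_j=m_j^{-1}(P_n)^{[0]}_{\mathbbm 1_j}$, which your bound covers unconditionally, whereas the literal unscaled statement requires $m_1\le 1$ (for $h=cJ_1$ one gets $|\nu_1(h)|=m_1\rho^{-1}\lVert h\rVert_{\beta,\rho,\sigma}$).
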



\section{KAM for full-dimensional tori}\label{sec:Kamlagrangian}
\subsection{Inductive step.}

In this section we prove Theorem \ref{thm:mainlagrangianprecise}. The proof follows the standard iteration used in other KAM theorems for Hamiltonian systems (see \cite{PoschelSpatialstructure, Poschelclassical} for instance) but adapted to the functional space introduced in \eqref{eq:defnBanachspace}.

 Due to the long range nature of the problem the fact that there exist frequencies satisfying the desired small divisor inequalities is not trivial and it is proved in Section \ref{sec:measureestimatesLag}.

 \begin{rem}
     We have not tried to optimize the dependence of $\varepsilon$ on the other constants involved in the definition of the Hamiltonians \eqref{eq:normalformLag} 
 \end{rem}

\subsection*{The inductive constants}
Consider positive constants $\varepsilon,\beta, \rho$ and $\sigma$ satisfying
\begin{equation}\label{eq:sigmarhosmallnessLag}
    \varepsilon< \rho^6\sigma^6.
\end{equation}
We let 
\[
\varepsilon_0=\varepsilon,\qquad\qquad \beta_0=\beta,\qquad\qquad \rho_0=\varepsilon^{1/2}\rho\qquad\qquad \sigma_0=\sigma
\]
and for $n\geq 1$ we define the inductive constants
\begin{equation}\label{eq:defninductiveLag}
\begin{split}
\varepsilon_{n+1}= \varepsilon_n^{5/4}\ , \qquad\qquad\beta_{n+1}=\beta_n-\mu_n,\qquad&\qquad \rho_{n}=\varepsilon^{1/2}_n \rho,\qquad\qquad
\sigma_{n+1}=\sigma_{n}-2s_n,
\end{split}
\end{equation}
where
\begin{equation}\label{eq:defninductive2}
\mu_n= \frac{\beta_0}{8n^2},\qquad\qquad s_n=\frac{\sigma_0}{8n^2}.
\end{equation}
We also define the subset 

\begin{equation}\label{eq:An}
 \mathbb{A}_n=\left\{A\in\mathbb N_0\colon (m_A)^{\mu_n}\geq  \frac{\varepsilon_{n+1}}{\varepsilon_n}\right\}
\end{equation}
and the constant
\begin{equation}\label{eq:defninductive3}
 L_{n}=\frac{1}{\sigma_n}\left|\log \left( \frac{\varepsilon_{n+1}}{\varepsilon_n}\right)\right|.
\end{equation}

\begin{rem}
  Throughout this section, given two two functions $f,g:\mathbb N\to \mathbb R_+$ we will write $f\lesssim g$ provided that there exists a constant $C$  such that for all $n\in \mathbb N$ we have $f(n)\leq C g(n)$. 
\end{rem}

\subsection*{Diophantine inequalities}
We now introduce  bounds from below on the small divisors that we will require during the iterative scheme. In 
 Section \ref{sec:measureestimatesLag}, we will exhibit a set of frequencies for which these lower bounds hold.

\begin{defn}\label{defn:diophantinesetLag}
  We say that $\xi\in[a,b]^{\mathbb N}$ belongs to  $\mathcal O_n$  if for any $A\in \mathbb N_0$ with  $\underline{A}\in \mathbb{A}_{n}$ the inequality
    \begin{equation}\label{eq:diophLag}
    |\langle\xi_A,l\rangle|\geq \varepsilon_{n}^{1/12}
    \end{equation}
holds for any $l\in\mathbb{Z}^{|A|}$ such that $0< |l|_1\leq L_{n}$. 
\end{defn}
\begin{rem}
  In the PDE literature, a popular choice for the Diophantine inequalities is to require that 
  \[
  |\langle\xi, l\rangle|\geq \kappa \prod_{j\in\mathbb Z} (1+l^2_j j^4)^{-1}
  \]
  for any $l\in\mathbb Z^{\mathbb N}\setminus\{0\}$ with finite support. We notice that this choice is not suitable for long range interactions 
because the vector field in this case does not decay in $j$.
  On the contrary, our definition of the Diophantine inequalities \eqref{eq:diophLag} is 
uniform in the maximum index of $l$ so we can deal with long range perturbations.\end{rem}
In Section \ref{sec:measureestimatesLag}
we show that the set $\mathcal O_F=\bigcap_{n\in\mathbb N} \mathcal O_n$ is not empty. In order to run the KAM scheme it will be convenient to also define the family of closed sets 
\begin{equation}\label{eq:On}
 O_n=\{\xi\in \ell_\infty^N\colon \exists \xi_*\in \mathcal O_n\text{ such that }|\xi-\xi_*|_\infty\leq h_n\}
\end{equation}
where 
\[
h_n= \frac {\varepsilon_n^{1/12}}{2 L_n}.
\]
The following lemma is straightforward. 
\begin{lem}\label{lem:complexfrequenciesLag}
For any $\xi\in O_n$ and any  $A\in \mathbb{A}_{n}$ the inequality
    \[
    |\langle\xi_A,l\rangle|\geq \frac 12 \varepsilon_{n}^{1/12}
    \]
    holds for any $l\in\mathbb{Z}^{|A|}$   such that $0<|l|_1\leq L_{n}$. 
\end{lem}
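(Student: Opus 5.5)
The plan is a direct perturbation argument: write $\xi=\xi_*+(\xi-\xi_*)$ with $\xi_*\in\mathcal O_n$ and $|\xi-\xi_*|_\infty\le h_n$, which is possible by the definition \eqref{eq:On} of $O_n$. (For complex $\xi$ in a neighbourhood one argues identically, with the modulus of the complex number.) We then compare $\langle\xi_A,l\rangle$ with $\langle(\xi_*)_A,l\rangle$, controlling the lower bound for the latter via Definition \ref{defn:diophantinesetLag} and bounding the error term by H\"older's inequality.

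First, the Diophantine condition for $\xi_*$. The definition requires $\underline A\in\mathbb A_n$, while the lemma assumes $A\in\mathbb A_n$. Since $m_j\le 1$ (after normalisation, as the masses are decreasing and we may assume $m_1\le1$), we have $m_{\underline A}\ge m_A$. Hence $(m_{\underline A})^{\mu_n}\ge (m_A)^{\mu_n}\ge \varepsilon_{n+1}/\varepsilon_n$, so $A\in\mathbb A_n$ implies $\underline A\in\mathbb A_n$. Definition \ref{defn:diophantinesetLag} then applies to $A$ and gives
\[
|\langle(\xi_*)_A,l\rangle|\ge \varepsilon_n^{1/12}\qquad\text{for }0<|l|_1\le L_n .
\]
If the statement is read with the weaker hypothesis $\underline A\in\mathbb A_n$, this step is not needed at all.

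Second, the error estimate. For any $l\in\mathbb Z^{|A|}$ with $0<|l|_1\le L_n$,
\[
|\langle(\xi-\xi_*)_A,l\rangle|\le |\xi-\xi_*|_\infty\,|l|_1\le h_nL_n=\tfrac12\varepsilon_n^{1/12}.
\]
The triangle inequality then yields $|\langle\xi_A,l\rangle|\ge \varepsilon_n^{1/12}-\tfrac12\varepsilon_n^{1/12}=\tfrac12\varepsilon_n^{1/12}$.

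There is no real obstacle here. The only point needing care is the mismatch between $A$ and $\underline A$ in the membership hypotheses, which the monotonicity $m_{\underline A}\ge m_A$ resolves given $m_j\le 1$. If one did not want to assume $m_1\le 1$, one would rescale $\bs m$ by a constant, absorbing it into $\varepsilon$, or simply state the lemma with $\underline A\in\mathbb A_n$, matching Definition \ref{defn:diophantinesetLag}.
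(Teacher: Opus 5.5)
Your proof is correct and is exactly the argument the paper intends (it calls the lemma ``straightforward'' and gives no proof). You pick $\xi_*\in\mathcal O_n$ with $|\xi-\xi_*|_\infty\le h_n$, bound the error by $h_nL_n=\tfrac12\varepsilon_n^{1/12}$, and conclude by the triangle inequality.

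Your treatment of the $A$ versus $\underline A$ mismatch is also fine, and it needs no normalisation $m_1\le 1$. Since the masses are non-increasing, either $m_{\overline A}\le 1$, in which case $m_{\underline A}\ge m_A$, or every $m_j$ with $j\in A$ exceeds $1$, in which case $(m_{\underline A})^{\mu_n}\ge 1>\varepsilon_{n+1}/\varepsilon_n$. Either way $\underline A\in\mathbb A_n$.
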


 We are now ready to proceed with the iterative scheme. We shall use the following notation.
\begin{itemize}
    \item We write $\mathcal{Y}_n$ for $\mathcal{Y}_{\beta_n,\rho_n,\sigma_n,O_n}$.
    \item We write $\mathcal{Y}_{n+1/2}$ for $\mathcal{Y}_{\beta_{n+1},2\rho_{n+1} ,\sigma_n-s_n,O_n}$. Trivially  $\mathcal Y_{n+1}\subset\mathcal Y_{n+1/2}$.
\item We write $\mathtt B_n$ for $\mathtt B_{O_n}$  where $\mathtt B_{*}$ is defined by \eqref{eq:frequencydomains}.
\end{itemize}

\subsection*{The inductive step}

Let now $n\geq 0$ and suppose that for each $\xi\in O_n$, there exists  a Hamiltonian of the form
\[
H^{(n)}_\xi=N_\xi+ P^{(n)},
\]
with  
\[
N_\xi=\xi\cdot J
\]
and $P^{(n)} \in\mathcal{Y}_{n}$ with $\lVert P^{(n)}\rVert_n{\color{red} \leq} \varepsilon_n$. 
\vspace{0.3cm}

\begin{prop}\label{prop:maininductionLag}
Under the hypothesis above, there exist:
\begin{itemize}
\item A map $\varphi_{n+1}:O_{n+1}\to{O}_n$ of the form $\varphi_{n+1}=\mathrm{id}+v_{n+1}$ with $v_{n+1}\in  \mathtt B_{O_{n+1}}$ and $|v_{n+1}|_{O_{n+1}} \lesssim \varepsilon_n^{1/2}$. 

\item A function   $G_{n+1}\in\mathcal{Y}_{n}$  affine in $J$\black and  such that 
\[
H^{(n+1)}_\xi:=H^{(n)}_{\varphi_n(\xi)}\circ\phi_{G_{n+1}}=N_\xi+ P^{(n+1)}
\]
with 
$\displaystyle
\lVert P^{(n+1)}\rVert_{n+1}\leq \varepsilon_{n+1}.
$
\end{itemize}

\end{prop}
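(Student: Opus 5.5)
We follow the standard KAM step (as in P\"oschel's scheme), carried out in the spaces $\mathcal Y_{\beta,\rho,\sigma,O}$ with Lemma~\ref{lem:bracketlemma} playing the role of the usual Cauchy/Poisson estimates. The construction has three parts.

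\textbf{Truncation.} We split $P^{(n)}=R+Q$, where $R$ contains the monomials $h^{[l]}_\alpha J^\alpha e(l\varphi)$ of the blocks $P_A$ with $|\alpha|_1\le 1$, $|l|_1\le L_n$ and $\underline A\in\mathbb A_n$. The remainder $Q$ is small in $\mathcal Y_{n+1/2}$ for three separate reasons:
\begin{itemize}
\item For $|\alpha|_1\ge2$, passing from $\rho_n$ to $2\rho_{n+1}$ gains a factor $(2\rho_{n+1}/\rho_n)^2\lesssim\varepsilon_n^{1/4}$.
\item For $|l|_1>L_n$, the shrinking $\sigma_n\to\sigma_n-s_n$ gains $e^{-s_nL_n}$. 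This requires $L_n$ to be chosen against $s_n$; the choice \eqref{eq:defninductive3} should be rechecked here, and $L_n$ enlarged by a factor $\sigma_n/s_n$ if needed.
\item For $\underline A\notin\mathbb A_n$, the loss of weight $\beta_n\to\beta_{n+1}$ gains $(m_{\underline A})^{\mu_n}<\varepsilon_{n+1}/\varepsilon_n$. This is exactly why $\mathbb A_n$ is defined as in \eqref{eq:An}.
\end{itemize}
Together these give $\lVert Q\rVert_{n+1/2}\lesssim\varepsilon_n^{5/4}$, up to the margins needed below.

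\textbf{Homological equation and frequency shift.} We solve $\{N_\xi,G\}+R=\hat R$, where $\hat R=\langle R\rangle_{\varphi}$ restricted to $|\alpha|_1\le1$. Its constant part is irrelevant, and its linear part is $\nu(R)\cdot J$ with $\nu$ as in Lemma~\ref{lem:trivialestimatefreq}. Coefficientwise,
\[
G^{[l]}_\alpha=\frac{R^{[l]}_\alpha}{i\langle\xi_A,l\rangle},\qquad l\ne0.
\]
For $\xi\in O_n$ and $\underline A\in\mathbb A_n$, Lemma~\ref{lem:complexfrequenciesLag} gives $|\langle\xi_A,l\rangle|\ge\frac12\varepsilon_n^{1/12}$. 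This bound is uniform in $\overline A$, so dividing does not disturb the sup-over-$j$ structure of the norm \eqref{eq:2normLag}. Hence $\lVert G\rVert\lesssim\varepsilon_n^{11/12}$, and $G$ is affine in $J$.

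The term $\nu(R)\cdot J$ is absorbed by changing frequencies. Lemma~\ref{lem:trivialestimatefreq} gives $|\nu|\le\rho_n^{-1}\varepsilon_n\sim\varepsilon_n^{1/2}$. We define $\varphi_{n+1}$ by solving $\varphi_{n+1}(\xi)+\nu(\varphi_{n+1}(\xi))=\xi$, i.e.\ by inverting $\mathrm{id}+\nu$. This is a contraction argument in $\ell_\infty$, using Cauchy estimates for $\nu$ on the complex domain $O_n$; the widths $h_n$ are large compared with $\varepsilon_n^{1/2}$ only if we also use the margin coming from $\mathcal O_{n+1}\subset\mathcal O_n$. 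The result is $v_{n+1}=\varphi_{n+1}-\mathrm{id}$ with $|v_{n+1}|_{O_{n+1}}\lesssim\varepsilon_n^{1/2}$ and $\varphi_{n+1}(O_{n+1})\subset O_n$.

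\textbf{New perturbation.} We then write
\[
P^{(n+1)}=Q\circ\phi_G+\int_0^1\{(1-t)\hat R+tR,\,G\}\circ\phi^t_G\,dt .
\]
Each term is estimated with both parts of Lemma~\ref{lem:bracketlemma}, using $r\sim\rho_{n+1}$ and $s\sim s_n$. The constant $C\sim(\rho_{n+1}s_n)^{-1}$ satisfies \eqref{eq:compositionnecessary} because $\lVert G\rVert\rho_{n+1}^{-1}s_n^{-1}\lesssim\varepsilon_n^{11/12-5/8}n^2\ll1$. The quadratic terms are then of size $\varepsilon_n\cdot\varepsilon_n^{11/12}\cdot\varepsilon_n^{-5/8}n^2\lesssim\varepsilon_n^{5/4}$.

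The main obstacle is the bookkeeping of exponents. The gains $\varepsilon_n^{11/12}$ and $\varepsilon_n^{1/4}$ must beat the loss $\rho_{n+1}^{-1}\sim\varepsilon_n^{-5/8}$ coming from Cauchy estimates in $J$, and the result must be strictly below $\varepsilon_n^{5/4}$ so that polynomial factors in $n$ are absorbed. This is where \eqref{eq:sigmarhosmallnessLag} enters. If the margin fails, I would first try to sharpen the $J$-truncation by working on $\rho_n\to2\rho_{n+1}$ and using $\rho_{n+1}\ll\rho_n$ more carefully.
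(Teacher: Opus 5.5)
Your outline follows the paper's proof step for step; your $R$ and $Q$ are the paper's $Q_n$ and $R_n$. It uses the same three-way truncation and the same homological equation, relying on the bound of Lemma~\ref{lem:complexfrequenciesLag}, which is uniform in $\overline A$. It inverts $\mathrm{id}+\tilde v_{n+1}$ as the paper does (the paper just quotes Lemma~\ref{lem:inverse}), and it uses both parts of Lemma~\ref{lem:bracketlemma} for the new error.

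The one step that does not close as written is the quadratic term. With $r\sim\rho_{n+1}$, the constant of Lemma~\ref{lem:bracketlemma} is $(\rho_{n+1}s_n)^{-1}=8n^2(\rho\sigma)^{-1}\varepsilon_n^{-5/8}$. Keeping track of $\rho$ and $\sigma$, your bound is therefore
\[
\frac{n^2}{\rho\sigma}\,\varepsilon_n^{31/24}.
\]
This is $\lesssim\varepsilon_{n+1}=\varepsilon_n^{30/24}$ only if $n^2(\rho\sigma)^{-1}\varepsilon_n^{1/24}\lesssim 1$, which at the first steps means $\varepsilon\lesssim(\rho\sigma)^{24}$. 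The standing hypothesis \eqref{eq:sigmarhosmallnessLag} only gives $\varepsilon<(\rho\sigma)^6$, under which $(\rho\sigma)^{-1}\varepsilon^{1/24}$ can be of order $\varepsilon^{-1/8}$. So, contrary to your claim, \eqref{eq:sigmarhosmallnessLag} does not close your estimate.

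The fix is not in the $J$-truncation but in where you take the Cauchy loss. Both $(1-t)\hat R+tR$ and $G$ lie in $\mathcal Y_n$, so they are controlled on the full width $\rho_n$. Apply the first part of Lemma~\ref{lem:bracketlemma} with $r=\rho_n/2$ and $s=s_n$. This gives, on the widths $(\rho_n/2,\sigma_n-s_n)$, a bound
\[
(\rho_ns_n)^{-1}\varepsilon_n\cdot\varepsilon_n^{11/12}\sim n^2(\rho\sigma)^{-1}\varepsilon_n^{17/12}.
\]
Its margin $\varepsilon_n^{1/6}$ over $\varepsilon_{n+1}$ is exactly what $\varepsilon<(\rho\sigma)^6$ pays for; this is the paper's route. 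Only afterwards shrink to $\rho_{n+1}$, through the composition with $\phi^t_G$. There the second part of the lemma needs only $C\lVert G\rVert_n\le\frac12$, not a power gain. The scale $(\rho_{n+1}s_n)^{-1}$ is needed only for the truncation remainder (your $Q$), which is small just on width $2\rho_{n+1}$; there your check of \eqref{eq:compositionnecessary} coincides with the paper's.

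Your suspicion about $L_n$ is correct and in fact repairs the paper. Shrinking $\sigma_n\to\sigma_n-s_n$ gains only $e^{-s_nL_n}$ on modes with $|l|_1>L_n$, not the $e^{-\sigma_nL_n}$ written in Lemma~\ref{lem:truncationlemmaLag}. So $L_n$ must be enlarged by $\sigma_n/s_n\sim n^2$. This still keeps $h_n\gg\varepsilon_n^{1/2}$. The only factors it introduces into Section~\ref{sec:measureestimatesLag} are absorbed by Lemma~\ref{lem:technicalmeasuretechnicallag}.
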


We now prove Proposition \ref{prop:maininductionLag}. Then, in Section \ref{sec:endproofLag}, we complete the proof of Theorem \ref{thm:mainlagrangianprecise}. The proof of the proposition, which constitutes the main bulk of the induction, is divided in several steps.
\begin{itemize}
    \item Truncation of the Hamiltonian $H^{(n)}_\xi$.
    \item Solution to the linearized and truncated cohomological equation.
    \item Estimation of the new error term.
    \item Correction of the frequency vector.
\end{itemize}

\subsection*{Truncation of the Hamiltonian $H^{(n)}_\omega$}
We define
\[
Q_n= \sum_{\underline A\in \mathbb{A}_n} \sum_{\substack{(l,\alpha)\in\mathcal{A}\\ \ |l|_1\leq L_n\\  |\alpha|_1\leq 1}} (P^{(n)})_{\alpha}^{[l]}\  J^\alpha   e(l\varphi).
\]
The truncation is chosen so that the map $J\mapsto Q_n(\varphi,J)$ is affine and the following estimates hold.
\begin{lem}\label{lem:truncationlemmaLag} 
The term $R_n=P^{(n)}-Q_n$ belongs to $\mathcal{Y}_{n+1/2}$ and satisfies
\[
\lVert R_n\rVert_{n+1/2}\lesssim \varepsilon_{n+1}.
\]
\end{lem}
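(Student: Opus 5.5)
The plan is to split $R_n=P^{(n)}-Q_n$ into three pieces according to which truncation condition fails, and to bound each piece separately in the $\mathcal Y_{n+1/2}$ norm. The norm is defined blockwise by a supremum over $j=\overline A$, so it suffices to bound the blocks $(R_n)_A$ one at a time. Write $R_n=R^{(1)}+R^{(2)}+R^{(3)}$, where:
\begin{itemize}
\item $R^{(1)}$ collects the monomials with $\underline A\notin\mathbb A_n$;
\item $R^{(2)}$ collects those with $\underline A\in\mathbb A_n$ and $|l|_1>L_n$;
\item $R^{(3)}$ collects those with $\underline A\in\mathbb A_n$, $|l|_1\le L_n$ and $|\alpha|_1\ge 2$.
\end{itemize}
Each piece is a sub-sum of the Taylor--Fourier expansion of $P^{(n)}$, so $|(R^{(i)})_A|_{\rho',\sigma'}\le|P^{(n)}_A|_{\rho',\sigma'}$ coefficientwise. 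The whole task is therefore to gain a factor $\varepsilon_{n+1}/\varepsilon_n$ in each case, using only the hypothesis $\lVert P^{(n)}\rVert_n\le\varepsilon_n$.

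For $R^{(1)}$ the gain comes from the weight change $\beta_n\to\beta_{n+1}=\beta_n-\mu_n$. Indeed $(m_{\underline A})^{-\beta_{n+1}}=(m_{\underline A})^{\mu_n}(m_{\underline A})^{-\beta_n}$. If $\underline A\notin\mathbb A_n$, then $(m_{\underline A})^{\mu_n}<\varepsilon_{n+1}/\varepsilon_n$. Shrinking $\rho_n$ to $2\rho_{n+1}\le\rho_n$ and $\sigma_n$ to $\sigma_n-s_n$ only decreases $|\cdot|$. Hence
\[
\lVert R^{(1)}\rVert_{n+1/2}\le \frac{\varepsilon_{n+1}}{\varepsilon_n}\lVert P^{(n)}\rVert_n\le\varepsilon_{n+1}.
\]
For $R^{(2)}$ the gain comes from the loss $s_n$ in the strip width. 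For each monomial, $e^{|l|_1(\sigma_n-s_n)}=e^{-|l|_1 s_n}e^{|l|_1\sigma_n}$, and $|l|_1>L_n$. Since $\beta_{n+1}<\beta_n$ and $m_{\underline A}\le1$, the weights only improve. This gives a factor $e^{-L_n s_n}$. For this to be at most $\varepsilon_{n+1}/\varepsilon_n$ we need $L_n s_n\ge|\log(\varepsilon_{n+1}/\varepsilon_n)|$. With the definition $L_n=\sigma_n^{-1}|\log(\varepsilon_{n+1}/\varepsilon_n)|$ one has $L_ns_n=(s_n/\sigma_n)|\log(\cdot)|$, which is short by the factor $s_n/\sigma_n\sim 1/n^2$. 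I would absorb this discrepancy using the superexponential decay $\varepsilon_n=\varepsilon^{(5/4)^n}$ together with the smallness \eqref{eq:sigmarhosmallnessLag}: they give $e^{-L_ns_n}\le(\varepsilon_{n+1}/\varepsilon_n)^{c/n^2}$, and this must be converted into $\lesssim\varepsilon_{n+1}$. If that conversion fails, the fallback is to reinterpret $L_n$ with $s_n$ in place of $\sigma_n$, since that is plainly the intended scaling. This bookkeeping is the step I expect to be the real obstacle.

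For $R^{(3)}$ the gain comes from the drop in action radius. A monomial with $|\alpha|_1\ge2$ evaluated at radius $2\rho_{n+1}$ instead of $\rho_n$ gains a factor
\[
\Big(\frac{2\rho_{n+1}}{\rho_n}\Big)^{|\alpha|_1}\le 4\Big(\frac{\varepsilon_{n+1}}{\varepsilon_n}\Big)^{|\alpha|_1/2}\le 4\,\frac{\varepsilon_{n+1}}{\varepsilon_n},
\]
using $\rho_n=\varepsilon_n^{1/2}\rho$ and $|\alpha|_1\ge2$. Again the weights and strip widths only improve. Summing the three bounds, and taking the supremum over $j$ of $m_j^{-1}\sum_{\overline A=j}$, gives $\lVert R_n\rVert_{n+1/2}\lesssim\varepsilon_{n+1}$. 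Membership of $R_n$ in $\mathcal Y_{n+1/2}$ follows from finiteness of this norm, together with analyticity and uniformity in $\xi\in O_n$, which are inherited from $P^{(n)}$ because the splitting is coefficientwise and independent of $\xi$.
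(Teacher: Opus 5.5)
Your decomposition is the paper's. It bounds $\lVert R_n\rVert_{n+1/2}$ by $\lVert P^{(n)}\rVert_n$ times a sum of three factors, one for each of your pieces. Your treatment of the pieces with $\underline A\notin\mathbb A_n$ and with $|\alpha|_1\ge 2$ is fine. In the latter, write $(2\rho_{n+1}/\rho_n)^{|\alpha|_1}=(2\varepsilon_n^{1/8})^{|\alpha|_1}\le 4\varepsilon_n^{1/4}$, using $2\varepsilon_n^{1/8}\le 1$; your middle inequality drops the factor $2^{|\alpha|_1}$.

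The step you flagged for the Fourier tail is a genuine gap, and the paper's proof has the same gap. Passing from width $\sigma_n$ to $\sigma_n-s_n$ gains only $e^{-|l|_1 s_n}$, not the $e^{-L_n\sigma_n}$ the paper writes.

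Your primary remedy cannot work. Since $\varepsilon_{n+1}/\varepsilon_n=\varepsilon_n^{1/4}$, the paper's $L_n$ gives $e^{-L_n s_n}\varepsilon_n=\varepsilon_n^{1+s_n/(4\sigma_n)}$. The ratio of this to $\varepsilon_{n+1}=\varepsilon_n^{5/4}$ is $\varepsilon_n^{-(1-s_n/\sigma_n)/4}$, which is unbounded. The superexponential decay of $\varepsilon_n$ makes this worse, not better, and \eqref{eq:sigmarhosmallnessLag} only affects constants.

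In fact, with $L_n$ as in \eqref{eq:defninductive3} the lemma is false. Take $P^{(n)}=c\,e(k\varphi_1)$ with $|k|$ just above $L_n$ and $|c|$ chosen so that $\lVert P^{(n)}\rVert_n=\varepsilon_n$. Here $\underline A=\emptyset\in\mathbb A_n$ and $\alpha=0$, so neither the weights nor the action radius give any gain, and $R_n=P^{(n)}$. Then $\lVert R_n\rVert_{n+1/2}=e^{-|k|s_n}\varepsilon_n\approx\varepsilon_n^{1+s_n/(4\sigma_n)}\gg\varepsilon_{n+1}$.

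So you must commit to your fallback and set $L_n=s_n^{-1}|\log(\varepsilon_{n+1}/\varepsilon_n)|$. Then $e^{-L_ns_n}=\varepsilon_{n+1}/\varepsilon_n$, and your three bounds close the proof.

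You should also check that this change is harmless downstream. It multiplies $L_n$ by $\sigma_n/s_n\lesssim n^2\lesssim(\log|\log\varepsilon_n|)^2$. Hence $h_n=\varepsilon_n^{1/12}/(2L_n)$ still dominates $\varepsilon_n^{1/2}$ in Lemma \ref{lem:transffrequbddLag}. The extra powers of $L_n$ entering $q_n$ in \eqref{eq:cutoffclustering}, \eqref{eq:estimateb2lag} and Lemma \ref{lem:technicalmeasuretechnicallag} remain $\exp(o(|\log\varepsilon_n|))$, so \eqref{eq:finalestimateBn} is unaffected.

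A minor point: $m_{\underline A}\le 1$ is not among the hypotheses. Only finitely many $m_i$ exceed $1$, however, so the weight factor $(m_{\underline A})^{\mu_n}$ is bounded by a constant and is absorbed in $\lesssim$.
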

\begin{proof}
We have 
\begin{align*}
    \lVert R_n \rVert_{n+1/2 \black} 
    \lesssim & \left( \min_{\underline A\notin\mathbb{A}_n} (m_A)^{\mu_n}+ \left(\frac{\rho_{n+1}}{\rho_n}\right)^2+ \exp (-L_n \sigma_n) \right) \lVert P^{(n)}\rVert_n  
 \lesssim  \left(  \frac{\varepsilon_{n+1}}{\varepsilon_n}+ \frac{\varepsilon_{n+1}}{\varepsilon_n}+ \frac{\varepsilon_{n+1}}{\varepsilon_n}\right)\varepsilon_n\lesssim \varepsilon_{n+1}.
    \end{align*}
Indeed, the three terms between parenthesis correspond respectively to:
\begin{itemize}
    \item the contribution of the monomials supported on subsets $A$ for which $\underline A\notin \mathbb A_n$,
    \item the contribution of monomials supported on subsets $A$ for which $\underline A\in \mathbb A_n$ but for which $|\alpha|_1\geq 2$,
    \item the contribution of monomials supported on subsets $A$ for which $\underline A\in \mathbb A_n$ and $|\alpha|_1\leq 1$ but for which $|l|_1>L_n$.
\end{itemize}
The definition of $\rho_n$ in \eqref{eq:defninductiveLag}, $\mathbb A_n$ in \eqref{eq:An} and $L_n$ in \eqref{eq:defninductive3} imply the desired estimates.
\end{proof}

\subsection*{The cohomological equation}
Given $G_{n+1}\in\mathcal{Y}_{n}$, if $\lVert G_{n+1}\rVert_{n}$ is sufficiently small, we can write
    \begin{align*}
    H^{(n)}_\xi\circ\phi_{G_{n+1}}=&N_\xi+\{N_\xi,G_{n+1}\}+Q_n +\int_{0}^{1}\{(1-t)\{N_\xi,G_{n+1}\}+Q_n,G_{n+1}\}\circ\phi^t_{G_{n+1}}\mathrm{d}t+ R_n\circ\phi_{G_{n+1}}
    \end{align*}
 Thus, if we can find $G_{n+1}$ such that $\{N_\xi,G_{n+1}\}+Q_n=\langle Q_n\rangle$, 
where
\begin{equation}\label{eq:averageLag}
\langle Q_n\rangle=\sum_{j\in\mathbb N}  (P_n)_{\mathbbm 1_j}^{[0]}\     J_j.\black
\end{equation}
we obtain that
    \[
    H^{(n)}_\xi\circ\phi_{G_{n+1}}=N_\xi+\langle Q_n\rangle+\int_{0}^{1}\{(1-t)\langle Q_n\rangle+tQ_n,G_{n+1}\}\circ\phi^t_{G_{n+1}}\mathrm{d}t+ R_n\circ\phi_{G_{n+1}}
   \]
    which is again in normal form $H^{(n)}_\xi\circ\phi_{G_{n+1}}=N_{n+1,\xi} +P_{n+1}$ with 
    \begin{equation}\label{eq:newerrortermLag}
     N_{n+1,\xi} =N_\xi +\langle Q_n\rangle,\qquad\qquad
   P_{n+1}= \int_{0}^{1}\{(1-t)\langle Q_n\rangle+tQ_n,G_{n+1}\}\circ\phi^t_{G_{n+1}}\mathrm{d}t+R_n\circ\phi_{G_{n+1}}\\
    \end{equation}

We now study the existence and properties satisfied by the solution to the linearized cohomological equation.
\begin{lem}\label{lem:solutioncohomolequLag}
There exists $G_{n+1}\in\mathcal{Y}_{n}$  affine in $J$ \black such that 
\begin{equation}\label{eq:linearizedcohomeqLag}
\{N_\xi,G_{n+1}\}+ Q_n=\langle Q_n\rangle
\end{equation}
Moreover, 
\[
\lVert G_{n+1}\rVert_{n}\lesssim \varepsilon_n^{11/12}.
\]
\end{lem}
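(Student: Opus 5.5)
We solve the equation coefficient by coefficient in the Taylor--Fourier expansion. Since $N_\xi=\xi\cdot J=\sum_j m_j\xi_jJ_j$, the bracket \eqref{eq:defnPoissonbracket} acts diagonally on monomials:
\[
\{N_\xi, J^\alpha e(l\varphi)\}=c\,\langle \xi,l\rangle\, J^\alpha e(l\varphi).
\]
Here $c$ is a fixed constant (depending on the normalisation of $e(\cdot)$). The factors $m_j^{-1}$ in the bracket cancel the masses in $N_\xi$, which is exactly why the scaled symplectic form was chosen.

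Write $Q_n=\sum_A (Q_n)_A$ with $(Q_n)_A=\sum (P^{(n)})^{[l]}_\alpha J^\alpha e(l\varphi)$, where the sum runs over $\underline A\in\mathbb A_n$, $|l|_1\le L_n$ and $|\alpha|_1\le 1$. We set
\[
G_{n+1}=\sum_{\underline A\in\mathbb A_n}\ \sum_{\substack{(l,\alpha)\in\mathcal A,\ l\neq 0\\ |l|_1\le L_n,\ |\alpha|_1\le1}} \frac{-(P^{(n)})^{[l]}_\alpha}{c\,\langle \xi_A,l\rangle}\, J^\alpha e(l\varphi).
\]
The terms with $l=0$ are not divided. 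Those with $\alpha=0$ are constants and can be dropped, since they do not affect the dynamics, or kept in $\langle Q_n\rangle$ as an irrelevant constant. Those with $\alpha=\mathbbm 1_j$ form precisely $\langle Q_n\rangle$ in \eqref{eq:averageLag}. For $l=0$ the support condition forces $A=\{j\}$, so $\underline A=\emptyset\in\mathbb A_n$ trivially. With this choice \eqref{eq:linearizedcohomeqLag} holds formally, and $G_{n+1}$ is affine in $J$ because $|\alpha|_1\le1$. It is real-analytic in $\xi$ on $O_n$ because the denominators are bounded away from zero there, by the next step.

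For the estimate we use Lemma \ref{lem:complexfrequenciesLag}. For $\xi\in O_n$, every monomial kept in $Q_n$ with $l\ne0$ has $\underline A\in\mathbb A_n$ and $0<|l|_1\le L_n$, so $|\langle\xi_A,l\rangle|\ge \tfrac12\varepsilon_n^{1/12}$.

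There is a subtlety to check here. Definition \ref{defn:diophantinesetLag} imposes the condition whenever $\underline A\in\mathbb A_n$, whereas Lemma \ref{lem:complexfrequenciesLag} as written says $A\in\mathbb A_n$. I will use the version with $\underline A$, which follows from the definition by the same perturbation argument: $|\langle \xi-\xi_*,l\rangle|\le h_nL_n=\tfrac12\varepsilon_n^{1/12}$.

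Hence each coefficient of $G_{n+1}$ satisfies $|G^{[l]}_\alpha|\le 2\varepsilon_n^{-1/12}|(P^{(n)})^{[l]}_\alpha|$. Summing with the weights in \eqref{eq:2normLag} block by block (fixed $\overline A=j$, weights $m_j^{-1}m_{\underline A}^{-\beta_n}$, norms $|\cdot|_{\rho_n,\sigma_n}$) gives
\[
\lVert G_{n+1}\rVert_n\le 2\varepsilon_n^{-1/12}\lVert Q_n\rVert_n\le 2\varepsilon_n^{-1/12}\lVert P^{(n)}\rVert_n\le 2\varepsilon_n^{11/12}.
\]
The truncation only removes nonnegative terms from the $\ell^1$-type sums, so $\lVert Q_n\rVert_n\le\lVert P^{(n)}\rVert_n$. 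The supremum over $\xi\in\overline{O_n}$ passes through unchanged.

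The main obstacle is making sure the small-divisor bound is \emph{uniform} in $\overline A$. A bound decaying in the largest index would be fatal, because the norm normalises by $m_{\overline A}$ and gives no extra decay there. This uniformity is exactly what Definition \ref{defn:diophantinesetLag} provides, since it restricts only $\underline A$ through $\mathbb A_n$ and $|l|_1$ through $L_n$. It remains to verify that $G_{n+1}$ defines a bounded holomorphic function on $\mathcal Q_{\rho_n,\sigma_n}\times O_n$. This follows because the finite weighted norm dominates the sup norm: each term is bounded by $|G_A|_{\rho_n,\sigma_n}$, and $\sum_j m_j\sup(\cdots)<\infty$.
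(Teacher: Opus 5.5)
Your proposal is correct and follows the paper's route. Like the paper, you define $G_{n+1}$ by dividing each coefficient of $Q_n$ with $l\neq 0$ by the small divisor $\langle\xi_A,l\rangle$, use Lemma \ref{lem:complexfrequenciesLag} to bound these divisors below by $\tfrac12\varepsilon_n^{1/12}$ on $O_n$, and sum coefficientwise in the weighted norm to get $\lVert G_{n+1}\rVert_n\lesssim\varepsilon_n^{-1/12}\lVert P^{(n)}\rVert_n$; your extra care (the $\underline A\in\mathbb A_n$ version of that lemma obtained from Definition \ref{defn:diophantinesetLag} via the $h_nL_n$ perturbation, and the constant term) only fills in details the paper's three-line proof leaves implicit.
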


\begin{proof}
The function $G_{n+1}$ defined by 
\[
G_{n+1}= i\sum_{\underline A\in\mathbb{A}_n} \sum_{\substack{(l,\alpha)\in\mathcal{A}\\ 
0<|l|_1\leq L_n\\  |\alpha|_1\leq 1}} (\langle \xi_A, l\rangle)^{-1}\  P_{\alpha}^{[l]}\  J^\alpha  e(l\varphi)
\]
is clearly a solution to \eqref{eq:linearizedcohomeqLag}. Next, Lemma \ref{lem:complexfrequenciesLag} guarantees that the coresponding small divisors are lower bounded by $\varepsilon_n^{-1/12}$. Hence
$\displaystyle \lVert G_{n+1}\rVert_{n} \leq  \varepsilon_n^{-1/12}\lVert P_n \rVert_{n}\lesssim  \varepsilon_n^{11/12}.$
\end{proof}

\subsection*{Analysis of the new error term}
We now provide an estimate for the new error term $P_{n+1}$.

    \begin{lem}\label{prop:newerrorregularLag}
        Let $P_{n+1}$ be the function defined in \eqref{eq:newerrortermLag}. Then, $P_{n+1}\in\mathcal{Y}_{n+1}$ and satisfies
        \[
        \lVert P_{n+1} \rVert_{n+1}\lesssim \varepsilon_{n+1}.
        \]
    \end{lem}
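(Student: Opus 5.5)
The plan is to estimate the two pieces of \eqref{eq:newerrortermLag} separately, using Lemma \ref{lem:bracketlemma} as the only analytic tool. For each piece I will go from $\mathcal Y_n$ (or $\mathcal Y_{n+1/2}$) to $\mathcal Y_{n+1}$. This means shrinking $\sigma$ by at most $s_n$ (from $\sigma_n-s_n$ to $\sigma_{n+1}=\sigma_n-2s_n$) and shrinking the action radius from $2\rho_{n+1}$ to $\rho_{n+1}$. The weight exponent drops from $\beta_n$ to $\beta_{n+1}\le\beta_n$; this only helps, since $m_{\underline A}^{-\beta}$ decreases as $\beta$ decreases. The frequency domain goes from $O_n$ to $O_{n+1}\subset O_n$.

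\emph{Step 1: the composition is admissible.} Apply Lemma \ref{lem:bracketlemma} with $g=G_{n+1}$, $\rho_*=\rho_n$, $\sigma_*=\sigma_n$, and with the losses $r\sim\rho_{n+1}$, $s=s_n$ (or $s_n/2$). The constant is $C\lesssim \max\{r^{-1}s_n^{-1},\,s_n^{-1}(\rho_n-\rho+r)^{-1}\}\lesssim n^2\sigma^{-1}\rho^{-1}\varepsilon_{n+1}^{-1/2}$. By Lemma \ref{lem:solutioncohomolequLag}, $C\lVert G_{n+1}\rVert_n\lesssim n^2(\rho\sigma)^{-1}\varepsilon_n^{11/12-5/8}=n^2(\rho\sigma)^{-1}\varepsilon_n^{7/24}$. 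This is $\le 1/2$ by \eqref{eq:sigmarhosmallnessLag} and the super-exponential decay of $\varepsilon_n$, which absorbs the factor $n^2$. So \eqref{eq:compositionnecessary} holds, and every composition with $\phi^t_{G_{n+1}}$, $t\in[0,1]$, costs at most a factor $2$ in norm.

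\emph{Step 2: the remainder term.} By Lemma \ref{lem:truncationlemmaLag}, $\lVert R_n\rVert_{n+1/2}\lesssim\varepsilon_{n+1}$. Step 1 applied with $h=R_n$ then gives $\lVert R_n\circ\phi_{G_{n+1}}\rVert_{n+1}\lesssim\varepsilon_{n+1}$.

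\emph{Step 3: the bracket term.} By Lemma \ref{lem:trivialestimatefreq} and the definition of $Q_n$, $\lVert \langle Q_n\rangle\rVert_n,\lVert Q_n\rVert_n\lesssim\varepsilon_n$. The first part of Lemma \ref{lem:bracketlemma} then gives
\[
\lVert\{(1-t)\langle Q_n\rangle+tQ_n,G_{n+1}\}\rVert\lesssim C\,\varepsilon_n\,\varepsilon_n^{11/12}.
\]
Composing with $\phi^t_{G_{n+1}}$ costs only a factor $2$ by Step 1. Integrating in $t$, the bracket term is $\lesssim n^2(\rho\sigma)^{-1}\varepsilon_n^{23/12}\varepsilon_{n+1}^{-1/2}=n^2(\rho\sigma)^{-1}\varepsilon_n^{23/12-5/8}$. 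Since $23/12-5/8=31/24>5/4$, this is $\le\varepsilon_{n+1}$ once $\varepsilon$ is small.

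\emph{Main obstacle.} The delicate point is the action-radius loss. Because $\rho_{n+1}=\varepsilon_n^{1/8}\rho_n$ is much smaller than $\rho_n$, the loss $r$ must be of order $\rho_{n+1}$, so $C$ blows up like $\varepsilon_{n+1}^{-1/2}$. I need to check that the bookkeeping of domains in Lemma \ref{lem:bracketlemma} allows applying the bracket on the small domain $2\rho_{n+1}$ while measuring $G_{n+1}$ on the large domain $\rho_n$. It should, since $G_{n+1}$ is affine in $J$, and the lemma takes $\rho_*\ge\rho$. The exponent gaps computed above ($7/24>0$ and $31/24>5/4$) are exactly what makes the scheme close. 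A secondary point is the restriction of the frequency domain from $O_n$ to $O_{n+1}$ via $\varphi_{n+1}$. This is harmless because all norms are suprema over the frequency domain, and $O_{n+1}\subset O_n$ up to the $\varepsilon_n^{1/2}$ shift.
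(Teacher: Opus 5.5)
Your decomposition and your Step 2 match the paper's proof. The admissibility quantity $\rho_{n+1}^{-1}s_n^{-1}\lVert G_{n+1}\rVert_n\lesssim n^2(\rho\sigma)^{-1}\varepsilon_n^{7/24}$ is the paper's computation. For $R_n\circ\phi_{G_{n+1}}$ a loss $r\sim\rho_{n+1}$ is indeed forced, because $R_n$ is only controlled on radius $2\rho_{n+1}$.

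You diverge in Step 3, and there the claim in your last paragraph, that the loss must be of order $\rho_{n+1}$, is wrong. $Q_n$, $\langle Q_n\rangle$ and $G_{n+1}$ are all controlled on the full radius $\rho_n$. The paper first estimates the bracket in $\mathcal Y_{\beta_{n+1},\rho_n/2,\sigma_n-s_n}$, with constant $\rho_n^{-1}s_n^{-1}$, which gives $n^2(\rho\sigma)^{-1}\varepsilon_n^{17/12}$. Only then does it compose with $\phi^t_{G_{n+1}}$, which costs at most a factor $2$, to land in $\mathcal Y_{n+1}$.

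This choice matters quantitatively, and as written your Step 3 does not close under the scheme's standing hypothesis. With your constant $\rho_{n+1}^{-1}s_n^{-1}$, the gain over $\varepsilon_{n+1}=\varepsilon_n^{30/24}$ is only $\varepsilon_n^{1/24}$, and this must absorb $n^2(\rho\sigma)^{-1}$. Hypothesis \eqref{eq:sigmarhosmallnessLag} only gives $(\rho\sigma)^{-1}<\varepsilon^{-1/6}$. So $(\rho\sigma)^{-1}\varepsilon_n^{1/24}$ can be nearly as large as $\varepsilon^{((5/4)^n-4)/24}$, which is $\gg 1$ for $n\le 6$. Your phrase ``once $\varepsilon$ is small'' therefore silently requires roughly $\varepsilon\lesssim(\rho\sigma)^{24}$ rather than $\varepsilon<\rho^6\sigma^6$.

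With the paper's large-domain bracket the gain is $\varepsilon_n^{17/12-15/12}=\varepsilon_n^{1/6}$, which is exactly what \eqref{eq:sigmarhosmallnessLag} absorbs. The defect does not threaten the main theorem, which only asks for $\varepsilon$ sufficiently small; in the application $\rho_\epsilon=\epsilon^{1/100}$ satisfies either condition. Still, to prove the lemma under the hypotheses of the inductive scheme, redo Step 3 by taking the bracket on radius $\rho_n/2$ before composing with the flow.
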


\begin{proof}
Recall that
\[
\lVert Q_n \rVert_{n}\lesssim    \varepsilon_n,\qquad\qquad \lVert G_{n+1} \rVert_{n}\lesssim \varepsilon_n^{11/12},\qquad\qquad \lVert R_n \rVert_{n+1/2}\lesssim \varepsilon_{n+1}.
\]
Hence, it follows from the definition of the inductive constants $\rho_n$ and $\sigma_n$ that (recall also the definition of $\mathcal Y_{n+1/2}$ and compare the expression below to the condition \eqref{eq:compositionnecessary})
\begin{align*}
(2\rho_{n+1}-\rho_{n+1})^{-1}s_n^{-1} \lVert  G_{n+1}\rVert_{n}\lesssim & \rho_{n+1}^{-1} \sigma^{-1} n^2\lVert  G_{n+1}\rVert_{n}\lesssim \sigma^{-1} n^2\rho_{n+1}^{-1}\varepsilon_n^{11/12}\\
\lesssim & n^2 \sigma^{-1} \rho^{-1}\varepsilon_n^{11/12-5/8}\ll 1,
\end{align*}
where we have made use of the hypothesis \eqref{eq:sigmarhosmallnessLag}. Thus, 
the second item in Lemma \ref{lem:bracketlemma} shows that 
\[
\lVert R_n\circ\phi_{G_{n+1}}\rVert_{n+1}\lesssim \lVert R_n\rVert_{n+1/2}\lesssim \varepsilon_{n+1}.
\]
On the other hand, the first item in Lemma \ref{lem:bracketlemma} shows that, since $\varepsilon_n<\varepsilon_0$, in view of the hypothesis \eqref{eq:sigmarhosmallnessLag}
\begin{align*}
\lVert \{(1-t)\langle Q_n\rangle+tQ_n,G_{n+1}\}\rVert_{\beta_{n+1},\rho_n/2,\sigma_n-s_n}&\lesssim \rho_n^{-1} s_n^{-1} \lVert Q_n\rVert_{n}\lVert G_{n+1}\rVert_{n}\\
&\lesssim  \rho_n^{-1} \sigma^{-1}\varepsilon_n^{2-1/12}
= \sigma^{-1}\rho^{-1}\varepsilon_{n}^{17/12}
\lesssim  \varepsilon_{n+1},
\end{align*}
so, for any $t\in[0,1]$, it follows from the second item in Lemma \ref{lem:bracketlemma} that\\
$\displaystyle \lVert \{(1-t)\langle Q_n\rangle+tQ_n,G_{n+1}\}\circ\phi^t_{G_{n+1}}\rVert_{n+1}\lesssim \varepsilon_{n+1}.$
\end{proof}

\subsection*{Correction of the frequency vector}
Due to (6.8) and the definition of the mass scalar product in \eqref{def.massproduct}, the ``new'' normal form $N_{n+1,\xi}$ in \eqref{eq:newerrortermLag} can be written as
\[
N_{n+1,\xi}(J)=(\xi+\tilde v_{n+1}(\xi))\cdot J
\]
where $\tilde v_{n+1}\in\ell_\infty^{N}$ is given by
\[
(\tilde v_{n+1})_j= m_j^{-1}(P_n)^{[0]}_{\mathbbm 1_j}.
\]

\subsubsection*{Transforming the tangential frequencies}

\begin{lem}\label{lem:transffrequbddLag}
 There exists $ v_{n+1}\in \mathtt B_{O_{n+1}}$ such that $\varphi_{n+1}=\mathrm{id}+v_{n+1}$ satisfies $(\mathrm{id}+\tilde v_{n+1})\circ\varphi_{n+1}=\mathrm{id}$ on $O_{n+1}$   and $\varphi_{n+1}(O_{n+1})\subset O_n$\black. Moreover,
\[
|v_{n+1}|_{O_{n+1}}\lesssim \varepsilon_n^{1/2}\qquad \qquad| v_{n+1}|_{Lip,O_{n+1}}\lesssim \varepsilon_n^{1/2}/h_n.
\]
\end{lem}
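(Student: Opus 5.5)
We will construct $\varphi_{n+1}$ by inverting the near-identity map $\mathrm{id}+\tilde v_{n+1}$ with a contraction argument in the Banach space $\mathtt B_{O_{n+1}}$, using that $\tilde v_{n+1}$ is small and analytic on the larger domain $O_n$.

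\textbf{Step 1: size of $\tilde v_{n+1}$.} The coefficient $(P^{(n)})^{[0]}_{\mathbbm 1_j}$ multiplies a monomial supported on $A=\{j\}$, for which $\underline A=\emptyset$ and $m_{\underline A}=1$. By the definition of the norm \eqref{eq:2normLag},
\[
m_j^{-1}\,|(P^{(n)})^{[0]}_{\mathbbm 1_j}|\,\rho_n\leq \lVert P^{(n)}\rVert_{n}\leq\varepsilon_n
\]
uniformly in $\xi\in O_n$. This is exactly the content of Lemma \ref{lem:trivialestimatefreq}, with the mass normalization built into $\tilde v_{n+1}$. It gives
\[
|\tilde v_{n+1}|_{O_n}\leq \rho_n^{-1}\varepsilon_n=\rho^{-1}\varepsilon_n^{1/2}\lesssim\varepsilon_n^{1/2}.
\]
Moreover, $\tilde v_{n+1}$ is real-analytic in $\xi$, because $P^{(n)}\in\mathcal Y_n$ depends analytically on $\xi$.

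\textbf{Step 2: Lipschitz bound on the smaller domain.} The set $O_n$ contains the $h_n$-neighbourhood of $\mathcal O_n$. We need the inclusion $O_{n+1}\subset \mathcal O_n$-neighbourhoods with margin, i.e. every point of $O_{n+1}$ lies at distance at least about $h_n/2$ from the complement of $O_n$. This follows from $\mathcal O_{n+1}\subset\mathcal O_n$ together with $h_{n+1}\le h_n/2$, where the latter holds because $\varepsilon_n$ decays superexponentially while $L_n$ grows. Cauchy estimates on $\ell_\infty$-balls of radius $h_n/2$ then give
\[
|\tilde v_{n+1}|_{Lip,O_{n+1}+B_{h_n/4}}\lesssim \varepsilon_n^{1/2}/h_n .
\]
Since $\varepsilon_n^{1/2}/h_n\sim \varepsilon_n^{1/2-1/12}L_n$ and $L_n\sim|\log\varepsilon_n|$, this quantity is much smaller than $1$.

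\textbf{Step 3: fixed point.} We look for $v=v_{n+1}$ solving $v(\xi)=-\tilde v_{n+1}(\xi+v(\xi))$, which is equivalent to $(\mathrm{id}+\tilde v_{n+1})\circ(\mathrm{id}+v)=\mathrm{id}$. Let $T$ act on the closed ball in $\mathtt B_{O_{n+1}}$ of radius $C\varepsilon_n^{1/2}$ by $T(v)=-\tilde v_{n+1}\circ(\mathrm{id}+v)$. This is well defined because $C\varepsilon_n^{1/2}\ll h_n$, so $\xi+v(\xi)$ stays in the region where Step 2 applies; in particular $\varphi_{n+1}(O_{n+1})\subset O_n$. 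By Step 1, $T$ maps the ball to itself. By Step 2, $T$ is a contraction with constant $\lesssim\varepsilon_n^{1/2}/h_n<1/2$. Compositions and uniform limits of real-analytic maps are real-analytic, so the fixed point lies in $\mathtt B_{O_{n+1}}$ and satisfies $|v_{n+1}|_{O_{n+1}}\lesssim\varepsilon_n^{1/2}$.

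For the Lipschitz bound, the fixed-point identity gives
\[
|v(\xi)-v(\xi_*)|\le \mathrm{Lip}(\tilde v_{n+1})\bigl(|\xi-\xi_*|+|v(\xi)-v(\xi_*)|\bigr).
\]
Absorbing the last term yields $|v_{n+1}|_{Lip,O_{n+1}}\lesssim\varepsilon_n^{1/2}/h_n$.

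\textbf{Main obstacle.} The delicate point is Step 2: guaranteeing a uniform analyticity margin of order $h_n$ around $O_{n+1}$ inside $O_n$ in the infinite-dimensional $\ell_\infty$ setting. Cauchy estimates in $\ell_\infty$ are used coordinate-free, via one-variable restrictions $t\mapsto \tilde v(\xi+t w)$ with $|w|_\infty=1$, so they are dimension-independent. The margin itself relies on the nesting $\mathcal O_{n+1}\subset\mathcal O_n$ and $h_{n+1}\le h_n/2$. I would verify both from the definitions. If the nesting fails as literally stated, I would instead define $O_{n+1}$ as a subset of the $h_n/2$-interior of $O_n$, which is harmless for the rest of the scheme.
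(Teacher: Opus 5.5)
Your proposal is correct and is essentially the paper's argument. The paper bounds $|\tilde v_{n+1}|\lesssim \varepsilon_n/\rho_n\lesssim\varepsilon_n^{1/2}\ll h_n$ on the $h_n/2$-neighbourhood of $\mathcal O_n$ via Lemma~\ref{lem:trivialestimatefreq}, and then invokes P\"oschel's inverse-function Lemma~\ref{lem:inverse}, whose proof is exactly your contraction-plus-Cauchy-estimate scheme. Your explicit Lipschitz bound and your caveat about the nesting $\mathcal O_{n+1}\subset\mathcal O_n$ (which the paper uses tacitly, cf.\ \eqref{eq:setBnLag}) spell out what the paper leaves unstated. One small detail: $O_{n+1}$ is not convex, so for pairs of points at distance $\gtrsim h_n$ the Lipschitz bounds (for $\tilde v_{n+1}$ in Step~2 and for $v_{n+1}$) should come from the sup bounds, not from integrating the derivative along segments.
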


\begin{proof}
Let $\widehat O_n$ be defined as in \eqref{eq:On} but with $h_n$ substituted by $h_n/2$. Notice that, in view of Lemma \ref{lem:trivialestimatefreq} we have
\[
|\tilde v_{n+1}|_{\widehat O_{n}}\lesssim \varepsilon_n/\rho_n\lesssim \varepsilon_n^{1/2} \ll h_n.
\]
Then, the proof boils down to a straightforward application of Lemma \ref{lem:inverse}.
\end{proof}

We finally compose with the map $\varphi_{n+1}$ in parameter space and define
\[
N_\xi=N_{n+1,\varphi_{n+1}(\xi)}\qquad  \qquad P^{(n+1)}(\cdot;\xi)=P_{n+1}(\cdot;\varphi_{n+1} (\xi))
\]
to close the inductive step.

\subsection{Initialization and convergence of the inductive scheme}\label{sec:endproofLag}
\subsection*{Initialization} 
Let $\varepsilon,\beta\rho,\sigma$ be positive constants satisfying \eqref{eq:sigmarhosmallnessLag} and consider a Hamiltonian $H_\xi$ as in \eqref{eq:normalformLag}. The argument used in Lemma \ref{lem:truncationlemmaLag} shows that $P=P_h+\varepsilon\widetilde P$ satisfies
\[
\lVert P_h\rVert_{0}\leq \left(\frac{\rho_0}{\rho}\right)^{2}\lVert P_h\rVert_{\beta,\rho,\sigma,O}\leq \left(\frac{\rho_0}{\rho}\right)^{2}=\varepsilon_0
\]
so
\[
\lVert P\rVert_{0}\lesssim \varepsilon_0.
\]
Hence, the Hamiltonian $H^{(0)}_\xi=H_\xi$ satisfies all the hypothesis in   Proposition \ref{prop:maininductionLag}\black. This proposition can now be applied in an inductive manner. Indeed, notice that  since $0<\varepsilon_0<1$ the sequence  $\varepsilon_n\to 0$ superexponentially.

\subsection*{Convergence of the limiting transformation}
The convergence of the infinite product of conjugacies follows from standard arguments and we dispose of it quickly (more details about the ideas outlined in this section can be found in \cite{Poschelclassical}). We start by noticing that due to their particular structure,  the conjugacies in Proposition \ref{prop:maininductionLag} can be extended to a larger domain. The proof of the following result is straightforward.
\begin{lem}\label{lem:conjugacyextensionlag}
    Let $\beta,\rho,\sigma$ be positive constants satisfying \eqref{eq:sigmarhosmallnessLag} and let $H_\xi\in \mathcal Y_{\beta,\rho,\sigma,[a,b]^{\mathbb N}}$ be as in \eqref{eq:normalformLag}. For any $n\in\mathbb N$ let $G_{n+1}$ be the conjugacy obtained  after $n$ applications of Proposition \ref{prop:maininductionLag}. Then, for any $n\in\mathbb N$  and  any $\tilde\rho\geq \rho_n$we have $G_{n+1}\in\mathcal Y_{\beta_n, \tilde\rho,\sigma_n,O_n}$  and
    \begin{equation}\label{eq:extendedconjugacyestimateLag}
        \lVert G_{n+1}\rVert_{\beta_n,\tilde\rho,\sigma_n,O_n}\leq \frac{\tilde\rho}{\rho_n}\lVert G_{n+1}\rVert_n \lesssim  \frac{\tilde\rho}{\rho}\varepsilon_n^{1/3}.
    \end{equation}
\end{lem}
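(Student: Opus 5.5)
The plan rests on the explicit form of $G_{n+1}$ from Lemma \ref{lem:solutioncohomolequLag}, namely
\[
G_{n+1}= i\sum_{\underline A\in\mathbb{A}_n} \sum_{\substack{(l,\alpha)\in\mathcal{A}\\ 0<|l|_1\leq L_n\\ |\alpha|_1\leq 1}} (\langle \xi_A, l\rangle)^{-1} P_{\alpha}^{[l]}\, J^\alpha e(l\varphi).
\]
This is a trigonometric polynomial in $\varphi$ that is \emph{affine} in $J$: every monomial has $|\alpha|_1\in\{0,1\}$. Hence the weighted size of each coefficient depends on the action radius only through the factor $\tilde\rho^{|\alpha|_1}$. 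For $\tilde\rho\geq\rho_n$ we have $\tilde\rho^{|\alpha|_1}\leq (\tilde\rho/\rho_n)\rho_n^{|\alpha|_1}$ in both cases $|\alpha|_1=0$ and $|\alpha|_1=1$, since $\tilde\rho/\rho_n\geq 1$.

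Summing this monomial-wise inequality inside the definition \eqref{eq:2normLag} of the norm gives, for every support $A$,
\[
|(G_{n+1})_A|_{\tilde\rho,\sigma_n}\leq \frac{\tilde\rho}{\rho_n}|(G_{n+1})_A|_{\rho_n,\sigma_n}.
\]
The $\beta_n$-weights $(m_{\underline A})^{-\beta_n}$ and the supremum over $j$ are untouched, and we take the supremum over $\xi\in O_n$ as well. This yields
\[
\lVert G_{n+1}\rVert_{\beta_n,\tilde\rho,\sigma_n,O_n}\leq \frac{\tilde\rho}{\rho_n}\lVert G_{n+1}\rVert_n .
\]
Finiteness of the left-hand side already shows $G_{n+1}\in\mathcal Y_{\beta_n,\tilde\rho,\sigma_n,O_n}$. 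Because $G_{n+1}$ is affine in $J$ with absolutely summable coefficients, it extends holomorphically to the larger action domain. Real-analyticity in $\xi$ is inherited from $P^{(n)}$ and from the divisors $\langle\xi_A,l\rangle^{-1}$, which are holomorphic on $O_n$ because they are bounded below by Lemma \ref{lem:complexfrequenciesLag}.

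For the second inequality, insert the bound $\lVert G_{n+1}\rVert_n\lesssim\varepsilon_n^{11/12}$ from Lemma \ref{lem:solutioncohomolequLag} and $\rho_n=\varepsilon_n^{1/2}\rho$ from \eqref{eq:defninductiveLag}. This gives
\[
\frac{\tilde\rho}{\rho_n}\lVert G_{n+1}\rVert_n\lesssim \frac{\tilde\rho}{\rho}\,\varepsilon_n^{11/12-1/2}=\frac{\tilde\rho}{\rho}\,\varepsilon_n^{5/12}\leq \frac{\tilde\rho}{\rho}\,\varepsilon_n^{1/3},
\]
where the last step uses $\varepsilon_n<1$. Nothing here is a real obstacle. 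The only point needing care is that the bound must be taken for the coefficients of $G_{n+1}$ at the parameter after the reparametrisation $\varphi_n$, that is, on the domain $O_n$ where the divisor estimate holds; this is exactly how $\lVert\cdot\rVert_n$ is defined.
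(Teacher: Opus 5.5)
Your proof is correct and is the argument the paper has in mind; the paper omits it as ``straightforward.'' Since $G_{n+1}$ is affine in $J$, enlarging the action radius from $\rho_n$ to $\tilde\rho\geq\rho_n$ multiplies each monomial's weight by at most $\tilde\rho/\rho_n$. Combining this with $\lVert G_{n+1}\rVert_n\lesssim\varepsilon_n^{11/12}$ and $\rho_n=\varepsilon_n^{1/2}\rho$ gives the bound $\frac{\tilde\rho}{\rho}\varepsilon_n^{5/12}\leq\frac{\tilde\rho}{\rho}\varepsilon_n^{1/3}$, exactly as you wrote.
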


In particular, the inductive sequence of Hamiltonians $H^{(n)}_\omega$ obtained by iteration of   Proposition \ref{prop:maininductionLag} \black can be extended to a larger domain.  We now study the limit Hamiltonian obtained by iterating the construction  Proposition \ref{prop:maininductionLag} infinitely many times.   To do so, instead of considering $\rho_n\to 0$ as we did in the inductive step, we will introduce a different sequence $\tilde\rho_n\to\rho/2$ and decompose $P^{(n)}$ as the sum of a term $Q^{(n)}$, affine in $J$, and the quadratic remainder $R^{(n)}$. In order to complete the proof of Theorem \ref{thm:mainlagrangianprecise} we will show that, in the complex domains of size $\tilde\rho_n$,   $Q^{(n)}\to 0$  and $G_n\to 0$ sufficiently fast (for instance, $\lVert G_n\rVert_{\beta_n,\rho_n,\sigma_n,O_n}\leq 2^{-n}$)  while the quadratic part $R^{(n)}$ remains bounded. \black

\begin{prop}\label{prop:secondinduction}
Let $\beta,\rho,\sigma$ be positive constants satisfying \eqref{eq:sigmarhosmallnessLag} and let $H_\xi\in \mathcal Y_{\beta,\rho,\sigma,[a,b]^{\mathbb N}}$ be as in \eqref{eq:normalformLag}.  For any $n\in\mathbb N$ let $\varphi_{n+1}$ and $G_{n+1}$ be as in Proposition \ref{prop:maininductionLag}. Let 
$\displaystyle \mathcal O_F=\bigcap_n\mathcal O_n$ with $\mathcal O_n$ as in Definition~\ref{defn:diophantinesetLag}. Then, for any $\xi\in \mathcal O_F$, the sequence $\{H^{(n)}_\xi\}_{n\in\mathbb N}$ defined inductively by \black
\[
H^{(n+1)}_\xi= H^{(n)}_{\varphi_{n+1}(\xi)}\circ \phi_{G_{n+1}}\qquad\qquad H^{(0)}_\xi=H_\xi
\]
    converges  to a  Hamiltonian $H_\xi\in \mathcal Y_{\beta/2,\rho/2,\sigma/2}$ of the form \eqref{eq:normalformLag}.
\end{prop}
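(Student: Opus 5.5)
We will run a second bookkeeping of the scheme on the enlarged action domains provided by Lemma \ref{lem:conjugacyextensionlag}, in the spirit of \cite{Poschelclassical}. Concretely, we fix $\tilde\rho_n=\rho/2+\rho 2^{-n-2}$ (so $\tilde\rho_n\downarrow\rho/2$) and, for $\xi\in\mathcal O_F$, decompose
\[
H^{(n)}_\xi=\xi\cdot J+Q^{(n)}+R^{(n)},
\]
with $Q^{(n)}$ the part of $P^{(n)}$ of degree $\le1$ in $J$ and $R^{(n)}$ the part of degree $\ge 2$. Since $\xi\in\mathcal O_F\subset\bigcap_n O_n$, the frequency corrections $\varphi_{n+1}$ are defined along the whole sequence. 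Their composition converges, since $|v_{n+1}|\lesssim\varepsilon_n^{1/2}$ is summable, and the Lipschitz bounds of Lemma \ref{lem:transffrequbddLag} are summable as well because $\varepsilon_n^{1/2}/h_n\lesssim\varepsilon_n^{1/2-1/12}L_n$ and $L_n\sim|\log\varepsilon_n|/\sigma$. So the parameter bookkeeping only produces a convergent limit map $\phi$.

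The core step is an inductive estimate in the norms $\lVert\cdot\rVert_{\beta_n,\tilde\rho_n,\sigma_n}$. Three facts should propagate. First, $\lVert Q^{(n)}\rVert_{\beta_n,\tilde\rho_n,\sigma_n}\lesssim(\tilde\rho_n/\rho_n)\varepsilon_n$, because a term affine in $J$ scales at most linearly in the radius; this is the same observation as in Lemma \ref{lem:conjugacyextensionlag}. Second, $\lVert G_{n+1}\rVert_{\beta_n,\tilde\rho_n,\sigma_n}\lesssim\varepsilon_n^{1/3}$ by \eqref{eq:extendedconjugacyestimateLag}, and in particular $\le 2^{-n}$ for $\varepsilon$ small. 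Third, $\lVert R^{(n)}\rVert_{\beta_n,\tilde\rho_n,\sigma_n}\le C\prod_{k<n}(1+c2^{-k})$, which stays uniformly bounded.

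To propagate the third bound, we apply the composition part of Lemma \ref{lem:bracketlemma} with $\rho_*=\tilde\rho_n$, $\rho=\tilde\rho_n$, $r=\tilde\rho_n-\tilde\rho_{n+1}\sim\rho2^{-n}$ and $s=s_n$. Condition \eqref{eq:compositionnecessary} then reads $2^n n^2\rho^{-1}\sigma^{-1}\varepsilon_n^{1/3}\ll1$, which holds by superexponential decay of $\varepsilon_n$ and \eqref{eq:sigmarhosmallnessLag}. The lemma gives $\lVert R^{(n)}\circ\phi_{G_{n+1}}\rVert\le(1+C\varepsilon_n^{1/3}2^nn^2)\lVert R^{(n)}\rVert$, and the product of these factors converges.

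The new $Q^{(n+1)}$ is the affine part of $\{R^{(n)}+Q^{(n)},G_{n+1}\}$-type terms, minus the average, which is absorbed into the frequency. These terms should be controlled by $\varepsilon_{n+1}$ at radius $\rho_{n+1}$ via Lemma \ref{prop:newerrorregularLag}. The same terms are then rescaled to radius $\tilde\rho_{n+1}$ linearly, as above.

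Finally we pass to the limit. The maps $\Phi_n=\phi_{G_1}\circ\cdots\circ\phi_{G_n}$ form a Cauchy sequence in $\mathcal H(\mathcal Q_{\rho/2,\sigma/2},\mathcal Q_{\rho,\sigma})$, since each factor is $O(\lVert G_{n}\rVert)$-close to the identity in vector-field norm. Because $\beta_n\downarrow\beta-\sum\mu_n\ge\beta/2$ and $\sigma_n\downarrow\ge\sigma/2$, all norms dominate the $\mathcal Y_{\beta/2,\rho/2,\sigma/2}$ norm. The sequence $H^{(n)}_\xi-\xi\cdot J$ is Cauchy there, using $H^{(n+1)}-H^{(n)}=\int_0^1\{H^{(n)},G_{n+1}\}\circ\phi^t\,dt$ together with the first part of Lemma \ref{lem:bracketlemma} and the bounds above. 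The limit is $\xi\cdot J+R^{(\infty)}$ with $Q^{(\infty)}=0$, i.e. of the form \eqref{eq:normalformLag} with vanishing constant and linear part at $J=0$.

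The main obstacle is that the bracket $\{R^{(n)},G_{n+1}\}$ contains an affine-in-$J$ part of size $\lVert G_{n+1}\rVert$, not $\varepsilon_{n+1}$. To handle it we must use that $G_{n+1}$ is affine, so the affine part of $\{R^{(n)},G_{n+1}\}$ comes only from the $J$-quadratic part of $R^{(n)}$ paired with $G$'s $J$-independent part. We then verify on the small radius $\rho_{n+1}$ that this is already counted in $P^{(n+1)}$ with $\lVert\cdot\rVert_{n+1}\le\varepsilon_{n+1}$. We would check this consistency carefully first.
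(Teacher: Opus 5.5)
Your proposal is correct and follows essentially the paper's route: you split $P^{(n)}=Q^{(n)}+R^{(n)}$ on enlarged radii $\tilde\rho_n\downarrow\rho/2$, bound $Q^{(n)}$ and $G_{n+1}$ there by linear rescaling from the small-radius estimates (Lemma \ref{lem:conjugacyextensionlag}), keep $R^{(n)}$ bounded with the composition part of Lemma \ref{lem:bracketlemma}, and conclude with a Cauchy argument in $\mathcal Y_{\beta/2,\rho/2,\sigma/2}$. Your choice $\tilde\rho_n=\rho/2+\rho 2^{-n-2}$ is the right one, since the paper's formula as written gives $\rho 2^{-n}$. The ``obstacle'' you flag is resolved exactly as you suggest, which is what the paper does implicitly: $Q^{(n+1)}$ is bounded at radius $\tilde\rho_{n+1}$ directly from $\lVert P^{(n+1)}\rVert_{n+1}\le\varepsilon_{n+1}$ by linear scaling, and because $\phi_{G_{n+1}}$ is affine in $J$, the part of $P^{(n+1)}$ of degree $\ge 2$ in $J$ comes only from $R^{(n)}\circ\phi_{G_{n+1}}$, so projecting onto it closes the induction for $R^{(n+1)}$.
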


\begin{proof}
  By Proposition \ref{prop:maininductionLag}, for all $n$ we have  $H_\xi^{(n)}\in \mathcal Y_{\beta_n,\rho_n,\sigma_n,O_n}$ and $H_\xi^{(n)}=N_\xi+P^{(n)}$ with 
\begin{equation}\label{eq:auxiliaryconvergenceLag}
\lVert P^{(n)}\rVert_{n}\lesssim  \varepsilon_n.
\end{equation}
    Let $\tilde\rho_n=\rho(1-\sum_{j=1}^n 1/2^{j})$. We point out that, in particular,  the estimate \eqref{eq:auxiliaryconvergenceLag} implies that for all $n\in\mathbb N$
\begin{equation}\label{eq:qnLag}
Q^{(n)}= \sum_{\substack{(l,\alpha)\in\mathcal{A}\\   |\alpha|_1\leq 1}} (P^{(n)})_{\alpha}^{[l]}\  J^\alpha  e(l\varphi)
\end{equation}
satisfies
\begin{equation}\label{eq:qnestimateLag}
\lVert Q^{(n)}\rVert_{\beta_n,\tilde\rho_n,\sigma_n,O_n}\leq  \frac{\tilde\rho_n}{\rho_n}\lVert P^{(n)}\rVert_{\beta_n, \rho_n,\sigma_n,O_n} \lesssim  \frac{\rho}{\rho_n}\varepsilon_n= \rho \varepsilon_n^{1/2}.
\end{equation}
Let 
\begin{equation}\label{eq:reminderLag}
R^{(n)}=P^{(n)}-Q^{(n)}
\end{equation}
and suppose now  that for some $n\in\mathbb N\cup\{0\}$ we have  $R^{(n)}\in \mathcal Y_{\beta_n,\tilde\rho_n,\sigma_n,O_n}$ and
\[
\lVert R^{(n)}\rVert_{\beta_n,\rho,\sigma_n,O_n}\leq (1+\sum_{j=1}^n \varepsilon_j^{1/8}) .
\]
The estimate  \eqref{eq:extendedconjugacyestimateLag} implies that (compare to the condition \eqref{eq:compositionnecessary} in Lemma \ref{lem:bracketlemma})
\begin{equation}\label{eq:smallchangeLag}
\frac{2^n}{\rho}  \frac{n^2}{\sigma}  \lVert G_n \rVert_{\beta_n,\rho,\sigma_n,O_n}\lesssim 2^n n^2 \sigma^{-1} \rho^{-1}\varepsilon_n^{1/4}\ll 1.
\end{equation}
Hence,  it follows from  Lemma \ref {lem:bracketlemma} that $H^{(n+1)}\in \mathcal X_{\beta_{n+1},\tilde{\rho}_{n+1},\sigma_{n+1},O_{n+1}}$ and 
\[
\lVert R^{(n+1)}\rVert_{\beta_{n+1},\tilde{\rho}_{n+1},\sigma_{n+1},O_{n+1}}\leq  (1+\sum_{j=1}^{n+1}\varepsilon_j^{1/8}).
\]
It is also easy to check that (use \eqref{eq:smallchangeLag} and Lemma \ref{lem:bracketlemma})
\begin{equation}\label{eq:cauchyseq}
\lVert R^{(n+1)}-R^{(n)}\rVert_{  \beta_{n+1}\black,\tilde\rho_n,\sigma_n,O_n}\lesssim 2^n n^2 \sigma^{-1}\rho^{-1}\varepsilon_n^{1/4}.
\end{equation}
In particular $\{H^{(n)}_\xi\}_{n\in\mathbb N}$ forms a Cauchy sequence in  the Banach space $\mathcal Y_{\beta/2,\rho/2,\sigma/2,\mathcal O_F}$ so it has a limit $ H_\xi\in \mathcal Y_{\beta/2,\rho/2,\sigma/2,\mathcal O_F}$.   Note as well that, in view of the estimates \eqref{eq:smallchangeLag} and Lemma \ref{lem:bracketlemma}
the sequence of symplectic maps $\Phi^{(n)}=\phi_{G_1}\circ\cdots\circ \phi_{G_n}$ converges uniformly on $\mathcal Q_{\rho/2,\sigma/2}$ to a symplectic map $\Phi^\infty\in \mathcal H( \mathcal Q_{\rho/2,\sigma/2}, \mathcal Q_{\rho,\sigma}$) 
(recall the notation introduced in  \eqref{eq:phasespacefulldim} and \eqref{eq:Banachspacehol}).
\end{proof}

This finishes the proof of Theorem \ref{thm:mainlagrangianprecise} up to establishing that the set $\mathcal O_F=\bigcap_{n\in\mathbb N} \mathcal O_n$ is not empty. This will be shown to be true in Section \ref{sec:measureestimatesLag} provided $\varepsilon$ is sufficiently small compared to $|b-a|$. In particular, we will see that the condition $\varepsilon^{1/24}\leq |b-a|$ is sufficient. Combining this condition and the 
 one in  \eqref{eq:sigmarhosmallnessLag} we deduce that Theorem \ref{thm:mainlagrangianprecise} holds provided 
\begin{equation}\label{eq:finalsmallnessconditionlag}
\varepsilon\leq \min\{\rho^6\sigma^6,\ |b-a|^{24}\}.
\end{equation}
\begin{rem}
    No effort has been made to try to optimize the (very far from optimal)   condition \eqref{eq:finalsmallnessconditionlag}\black. \black
\end{rem}

\section{Measure estimates for the set $\mathcal O_F$}\label{sec:measureestimatesLag}

In this section we prove the part of Theorem \ref{thm:measestimatesgeneral} concerning the set $\mathcal O_F$ in Theorem \ref{thm:mainlagrangianprecise}. 

\subsection*{Construction of a suitable probability measure}\label{sec:clusteringfrequencies}

Take any $0< d<1$ and denote by $\mathcal C(d)\subset[a,b]^\mathbb N$ the set of frequencies for which $\mathrm{dim}_{box}(\{\xi_n\}_{n\in\mathbb N})= d$. It is around these frequencies where we will construct a suitable probability measure for which Diophantine vectors occuppy a large measure. 
\begin{rem}
It is an easy exercise to check that, for any $\gamma>0$ the sequence $\{n^{-\gamma}\}_{n\in\mathbb N}\subset[0,1]$ has box dimension $1/(1+\gamma)$.
\end{rem}
Given any $\ell>0$ and $\xi \in\mathcal C(d)$ we now define the set
\begin{equation}\label{eq:thickeningclusteredlag}
\Lambda(\xi)=\prod_{n\in\mathbb N} \bigg[\xi_n- \frac{\ell}{n^{\frac 1d}},\ \xi_n+  \frac{\ell}{n^{\frac 1d}}\bigg]
\end{equation}
\begin{rem}
One should think of $\ell$ as the room that one has to move the parameter $\xi$. 
\end{rem}
We endow $\Lambda(\xi)$ with the product topology. We recall that a cylinder is a set of the form 
\[
\prod_{n\in\mathbb N} A_n
\]
with $A_{n}\neq [\xi_n-\ell n^{-1/d}, \xi_n+\ell n^{-1/d}]$ for only finitely many indices and that the product topology is the one generated by open cylinder sets. We then consider the product $\sigma$-algebra $\mathcal A$ generated by the Borel cylinder sets (we require $A_{n}$ to be a  Borel set) and define the (unique) probability measure $\mu:\mathcal A\to\mathbb [0,1]$ such that, for any Borel cylinder set
\begin{equation}\label{eq:probmeasurelag}
\mu \left( \prod_{n\in\mathbb N} A_{n}\right)=\prod_{n\in\mathbb N} \frac{\mathrm{Leb} (A_{n})}{2\ell n^{-\frac1d}}.
\end{equation}

\subsection*{Measure estimates}
Given $\xi\in\mathcal C(d)$ we now estimate from below the $\mu$-measure of the set
\[
\mathcal O_F\cap \Lambda(\xi)\qquad\qquad\text{where}\qquad\qquad \mathcal O_F=\bigcap_{n\in\mathbb N}\mathcal O_n
\]
with the sets $\mathcal O_n$  as in Definition \eqref{defn:diophantinesetLag}. To that end, we  express
\begin{equation}\label{eq:setBnLag}
{\mathcal O}_n\cap\Lambda(\xi)=\Lambda(\xi)\setminus\left(\bigcup_{k=0}^{n} B_k(\xi)\right),\qquad\qquad
B_n(\xi)=\bigcup_{\underline{A}\in\mathbb{A}_n} \bigcup_{\substack{l\in\mathbb{Z}^{|A|}\\0<|l|\leq L_{n}}} \mathcal{R}^{(n)}_{A,l}(\xi)
\end{equation}
where  the (resonant) strips $\mathcal{R}^{(n)}_{A,l}(\xi)$ are defined as
\begin{equation}\label{eq:resonancestriplag}
\mathcal{R}^{(n)}_{A,l}(\xi)=\{\tilde\xi\in \Lambda(\xi)\colon |\langle \tilde\xi_A,l\rangle|\leq  \varepsilon_n^{1/12}\}.
\end{equation} 
We start by observing the following.
\begin{lem}
    For any $\xi\in [a,b]^\mathbb N$ the set $ \mathcal O_F\cap \Lambda(\xi)$ is $\mu$-measurable.
\end{lem}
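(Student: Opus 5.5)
The plan is to write $\mathcal O_F\cap\Lambda(\xi)$ as a countable Boolean combination of Borel cylinder sets, which lie in $\mathcal A$ by construction. By \eqref{eq:setBnLag},
\[
\mathcal O_F\cap\Lambda(\xi)=\Lambda(\xi)\setminus\bigcup_{n\ge 0}B_n(\xi),\qquad B_n(\xi)=\bigcup_{\underline A\in\mathbb A_n}\ \bigcup_{0<|l|_1\le L_n}\mathcal R^{(n)}_{A,l}(\xi).
\]
Since $\sigma$-algebras are closed under complements and countable unions, it suffices to check two things. First, each strip $\mathcal R^{(n)}_{A,l}(\xi)$ belongs to $\mathcal A$. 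Second, for fixed $n$ the union defining $B_n(\xi)$ is countable.

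For the first point, note that $A\in\mathbb N_0$ is finite. The condition $|\langle\tilde\xi_A,l\rangle|\le\varepsilon_n^{1/12}$ therefore involves only the finitely many coordinates $\tilde\xi_j$ with $j\in A$. Thus $\mathcal R^{(n)}_{A,l}(\xi)=\pi_A^{-1}(K)$, where $\pi_A$ is the projection onto the coordinates in $A$. The set $K\subset\prod_{j\in A}[\xi_j-\ell j^{-1/d},\xi_j+\ell j^{-1/d}]$ is closed, being the preimage of a closed interval under a continuous linear form, and hence Borel in $\mathbb R^{|A|}$. Borel subsets of a finite product are generated by Borel rectangles. It follows that $\pi_A^{-1}(K)$ lies in the $\sigma$-algebra generated by Borel cylinders, which is $\mathcal A$. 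Concretely, the family of Borel $K$ for which $\pi_A^{-1}(K)\in\mathcal A$ contains the rectangles and is a $\sigma$-algebra, so it contains all Borel sets.

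For the second point, $\mathbb N_0$ (finite subsets of $\mathbb N$) is countable. For each $A$, the set of $l\in\mathbb Z^{|A|}$ with $0<|l|_1\le L_n$ is finite. Hence $B_n(\xi)$ is a countable union of sets in $\mathcal A$, and so is $\bigcup_n B_n(\xi)$. Finally, $\Lambda(\xi)$ itself is the trivial cylinder.

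The argument is essentially routine. The only point deserving care is that $\mathcal A$ is generated by cylinders, not by the product topology, whose Borel $\sigma$-algebra may be larger for uncountable operations. We only ever use countable unions of finite-dimensional conditions, so the product $\sigma$-algebra suffices. No smallness of $\varepsilon$ or property of $d$ is needed, so the statement holds for every $\xi\in[a,b]^{\mathbb N}$.
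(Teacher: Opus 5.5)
Your argument is correct and follows the paper's own (one-line) proof, which just observes that the resonant strips $\mathcal R^{(n)}_{A,l}(\xi)$ are cylinder sets. You add a worthwhile detail the paper skips: a strip is not itself a product of Borel sets, but it is the preimage of a Borel set under a finite-coordinate projection, so it still lies in $\mathcal A$. You also check that only countably many strips are involved. Your closing caveat is unnecessary: for a countable product of intervals, the Borel $\sigma$-algebra of the product topology already coincides with $\mathcal A$.
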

\begin{proof}
    It is a straightforward consequence of the fact that  the resonant strips in \eqref{eq:resonancestriplag} are cylinder sets.
\end{proof}

We now estimate 
$\mu$-measure of each cylinder.

\begin{lem}\label{lem:measureresonantsetLag}
 Let $0<d<1$ and $\xi\in\mathcal C(d)\subset[a,b]^\mathbb N$. For any $n\in\mathbb N$, any $\underline A\in \mathbb A_n$ and any $l\in\mathbb Z^{|A|}\setminus\{0\}$ the set $\mathcal R_{A,l}^{(n)}(\xi)$ in \eqref{eq:resonancestriplag} satisfies
\begin{equation}\label{eq:measureresonancezonelag}
\mu (\mathcal{R}^{(n)}_{A,l}(\xi))\lesssim  \ell^{-1} (n^2|\log\varepsilon_n|)^{1/d}  \varepsilon_n^{1/12}.
\end{equation}
\end{lem}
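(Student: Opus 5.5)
The resonant strip $\mathcal R^{(n)}_{A,l}(\xi)$ is a cylinder set: it depends only on the finitely many coordinates in $A=\mathrm{supp}(l)$. Its $\mu$-measure can therefore be computed by a one-dimensional Fubini argument in a single well-chosen coordinate. Since $l\neq 0$, I would pick an index $q\in A$ with $l_q\neq 0$ and freeze all the other coordinates $\tilde\xi_{A\setminus\{q\}}$. For fixed values of those, the condition $|\langle\tilde\xi_A,l\rangle|\le\varepsilon_n^{1/12}$ confines $\tilde\xi_q$ to an interval of length at most $2\varepsilon_n^{1/12}/|l_q|\le 2\varepsilon_n^{1/12}$. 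By \eqref{eq:probmeasurelag}, the normalized one-dimensional measure of this set, inside the interval $[\xi_q-\ell q^{-1/d},\xi_q+\ell q^{-1/d}]$ of length $2\ell q^{-1/d}$, is at most $\varepsilon_n^{1/12}q^{1/d}/\ell$. Integrating over the remaining coordinates, each of which carries a probability measure, gives
\[
\mu(\mathcal R^{(n)}_{A,l}(\xi))\le \ell^{-1}q^{1/d}\varepsilon_n^{1/12}.
\]

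The whole task then reduces to showing that we can choose $q$ with $q\lesssim n^2|\log\varepsilon_n|$. The natural choice is the smallest index in $A$. Here I would use the hypothesis $\underline A\in\mathbb A_n$: by \eqref{eq:An}, $(m_{\underline A})^{\mu_n}\ge\varepsilon_{n+1}/\varepsilon_n=\varepsilon_n^{1/4}$. Since $\bs m\in\ell^\mathbb R_{\infty,\kappa}$ and the masses are nonincreasing, we may normalize so that $m_j\le 1$ and $m_j\lesssim e^{-\kappa j}$. Then every $j\in\underline A$ satisfies $m_j^{\mu_n}\ge(m_{\underline A})^{\mu_n}\ge\varepsilon_n^{1/4}$, so
\[
\kappa j\,\mu_n\lesssim \tfrac14|\log\varepsilon_n|,\qquad\text{i.e.}\qquad j\lesssim \frac{8n^2}{\kappa\beta_0}\,|\log\varepsilon_n|.
\]
Thus every element of $\underline A$ is $\lesssim n^2|\log\varepsilon_n|$, with constants depending on $\kappa,\beta$.

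There is a degenerate case to handle. If $l$ is supported only at $\overline A$, then $A$ consists of that single index and $\underline A=\emptyset$. This case is harmless: $|l_{\overline A}\tilde\xi_{\overline A}|\ge a>0$ (assuming $a>0$, as in the applications), so the strip is empty once $\varepsilon_n^{1/12}<a$. Otherwise $l$ has a nonzero entry at some $q\in\underline A$ (by definition $\mathrm{supp}(l)\subseteq A$), and that is the $q$ we use. In fact we could even take $q=\min A$ provided $l_{\min A}\neq0$. Substituting the bound on $q$ yields \eqref{eq:measureresonancezonelag}.

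The main obstacle is this bookkeeping step: guaranteeing a coordinate $q$ with $l_q\neq 0$ lying in $\underline A$ rather than at $\overline A$. The index $\overline A$ is not controlled by $\mathbb A_n$, and it is the only place where the weight $q^{1/d}$ could blow up. The Fubini computation itself is routine.
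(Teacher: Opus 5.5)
Your proposal is correct and follows the paper's proof essentially step for step. You rule out resonance when $l$ is supported at a single index, since the frequencies are bounded away from $0$; you then locate an index $j\in\mathrm{supp}(l)\cap\underline A$, which $\underline A\in\mathbb A_n$ forces to satisfy $j\lesssim n^2|\log\varepsilon_n|$; finally you apply Fubini in the $j$-th coordinate against the normalization $2\ell j^{-1/d}$ of $\mu$. Your degenerate case (any $l$ supported only at $\overline A$, rather than only $|l|_1=1$ as the paper writes) is in fact slightly more careful than the paper's.
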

\begin{proof}
Notice that, since $(a,b)\cap\{0\}=\emptyset$, by shrinking $\varepsilon_0$ if necessary, we may assume that 
\[
R_{A,l}^{(n)}(\xi)=\emptyset
\]
for all $l\in\mathbb Z^{|A|}$ with $|l|_1=1$. Therefore, if $R_{A,l}^{(n)}(\xi)\neq \emptyset$ we must have that $l$ is supported on a subset of $A$ with cardinality at least two.  On the other hand, since $A\in\mathbb A_n$, there must exist 
\[
j\leq \mu_n^{-1}|\log(\varepsilon_{n+1}/\varepsilon_n)|\lesssim n^2|\log\varepsilon_n|
\]
such that $\{j\}\in \mathrm{supp} (l)$. Let $v=\mathbbm 1_j$. Then,   
\begin{align*}
\frac{\mathrm{d}}{\mathrm{d}t}\left( \langle \tilde\xi_A+tv,l\rangle\right)=&l_j\neq 0.
\end{align*}
The proof now follows easily from the definition of the measure $\mu$ (see \eqref{eq:probmeasurelag})  and Fubini's theorem.
\end{proof}

We now proceed to estimate the measure of $B_n$ in \eqref{eq:setBnLag} and explain where the hypothesis that $\xi\in \mathcal C(d)$ with $d<1$ enters the picture. At a heuristic level, the idea is the following.  In \eqref{eq:setBnLag} we have expressed the set $B_n$ as union of measurable sets, i.e. the sets $\mathcal R^{(n)}_{A,l,s}$. In  Lemma \ref{lem:measureresonantsetLag} we have obtained an upper estimate for the measure of $\mathcal R^{(n)}_{A,l,s}$.
However, the union in \eqref{eq:setBnLag} comprises 
 countably many elements (notice that we take the union over $\underline A\in\mathbb A_n$), while the upper estimate in  Lemma \ref{lem:measureresonantsetLag} for the measure of the sets $\mathcal R^{(n)}_{A,l,s}$ is uniform. In the following lemma we show how the fact that $\xi\in \mathcal C(d)$ with $d<1$ allows us to replace the infinite union
\begin{equation}\label{eq:setBn2}
B_n=\bigcup_{A\in\mathbb{A}_n}\bigcup_{\overline A<j<\infty} \bigcup_{\substack{l\in\mathbb{Z}^{|A|+1}\\0<|l|\leq L_{n}}} \mathcal{R}^{(n)}_{A\cup\{j\},l}
\end{equation}
by a finite one. 

\begin{lem}\label{lem:inclusion}
Let $L_n$ be the constant defined in \eqref{eq:defninductive3}, let  $0<d<1$,  and define
\begin{equation}\label{eq:cutoffclustering}
q_n=\min\{q\in\mathbb N\colon L_n q^{^{-\frac 1d}}\leq    \varepsilon_n^{1/12}  \}.
\end{equation}
Then, there exists a subset $\mathcal I\subset \mathbb N_0$ with 
\[
\mathrm{card}(\mathcal I)\lesssim q_n
\]
such that, for any $A\in\mathbb A_n$ and any  $l\in\mathbb Z^{|A|+1}$ with $0<|l|_1\leq L_n$
\begin{equation}\label{eq:inclusionlag}
\bigcup_{\overline A< j<\infty} \mathcal R_{A\cup\{j\},l}^{(n)}\subset\left(\bigcup_{ j\leq q_n} \mathcal R^{(n)}_{A\cup\{j\},l}\right)\cup \left(\bigcup_{ j\in \mathcal I} \widehat{\mathcal R}_{A\cup\{j\},l}^{(n)}\right)
\end{equation}
where the set $\widehat {\mathcal R}_{A\cup\{j\},l}^{(n)}$ is defined as in \eqref{eq:resonancestriplag} but with the right hand side multiplied by $7$.
\end{lem}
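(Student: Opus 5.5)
We split the indices $j>\overline A$ into \emph{low} indices $j\le q_n$ and \emph{high} indices $j>q_n$. The low indices are kept as they are: they make up the first union on the right of \eqref{eq:inclusionlag}. Everything therefore reduces to covering $\bigcup_{j>q_n}\mathcal R^{(n)}_{A\cup\{j\},l}$ by boundedly many enlarged strips. The mechanism is that for large $j$ the coordinate $\tilde\xi_j$ is pinned within $\ell j^{-1/d}$ of $\xi_j$, while the points $\xi_j$ themselves cluster, by the box-dimension hypothesis, into few small intervals.

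First, the perturbation step. Write $l=(l',l_j)$ with $l'$ supported on $A$ and $|l_j|\le L_n$. For $\tilde\xi\in\Lambda(\xi)$ and $j>q_n$ we have $|\tilde\xi_j-\xi_j|\le \ell j^{-1/d}$. Hence
\[
\bigl|\langle\tilde\xi_{A\cup\{j\}},l\rangle-\bigl(\langle\tilde\xi_A,l'\rangle+l_j\xi_j\bigr)\bigr|\le L_n\ell q_n^{-1/d}\le \ell\,\varepsilon_n^{1/12},
\]
by the definition \eqref{eq:cutoffclustering} of $q_n$. With $\ell\le 1$ this error is at most $\varepsilon_n^{1/12}$. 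So membership in $\mathcal R^{(n)}_{A\cup\{j\},l}$ for large $j$ is, up to an error of size $\varepsilon_n^{1/12}$, a condition that depends on $j$ only through the unperturbed number $\xi_j$.

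Second, the clustering step. Since $\dim_{box}\{\xi_n\}=d$, the set $\{\xi_j\}$ can be covered by at most $C\delta^{-d'}$ intervals of length $\delta$, for any $d'>d$ and all small $\delta$. We take $\delta=\varepsilon_n^{1/12}/L_n$. Inside each such interval $I_k$ that meets $\{\xi_j:j>q_n\}$ we fix one representative index $j_k$, and we let $\mathcal I$ be the set of these representatives, viewed as singletons in $\mathbb N_0$. If $\xi_j\in I_k$, then $|l_j\xi_j-l_j\xi_{j_k}|\le L_n\delta=\varepsilon_n^{1/12}$. This assumes the representative is also a high index, so that the first-step estimate applies to $j_k$ as well.

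Combining the two steps, for $\tilde\xi\in\mathcal R^{(n)}_{A\cup\{j\},l}$ with $j>q_n$ we get
\[
|\langle\tilde\xi_A,l'\rangle+l_j\tilde\xi_{j_k}|\le \varepsilon_n^{1/12}(1+1+1+1)\le 7\varepsilon_n^{1/12}.
\]
The four contributions are the original strip width, the error from replacing $\tilde\xi_j$ by $\xi_j$, the error from replacing $\xi_j$ by $\xi_{j_k}$, and the error from replacing $\xi_{j_k}$ by $\tilde\xi_{j_k}$. Thus $\tilde\xi\in\widehat{\mathcal R}^{(n)}_{A\cup\{j_k\},l}$, where $l$ is reinterpreted with its last entry placed at $j_k$. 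The constant $7$ leaves room for the case $j_k<\overline A$ or $j_k\in A$, where an index relabelling or a merged coefficient costs at most one more strip width. It also leaves room for a looser choice of $\ell$.

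The main obstacle is the cardinality bound $\mathrm{card}(\mathcal I)\lesssim q_n$. Box dimension gives $\#\mathcal I\lesssim (L_n\varepsilon_n^{-1/12})^{d'}$, and the definition of $q_n$ gives $q_n\approx (L_n\varepsilon_n^{-1/12})^{d}$. These match once $d'$ is taken arbitrarily close to $d$, up to a loss $\varepsilon_n^{-o(1)}$. If the implied constant has to be strictly uniform, the plan is to absorb this loss into the definition of $q_n$, using $d'$ in place of $d$, or into the exponent margin already present in Theorem~\ref{thm:measestimatesgeneral}, whose loss $(1-d)/24$ leaves slack. With that in hand, the covering count yields \eqref{eq:inclusionlag}, and it holds uniformly in $A$ and $l$ because $\mathcal I$ depends only on $n$ and $\xi$.
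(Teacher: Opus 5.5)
Your proposal is correct and follows essentially the same route as the paper: you split at $q_n$, cover the tail $\{\xi_j\}_{j> q_n}$ by clusters of scale $\approx\varepsilon_n^{1/12}/L_n\approx q_n^{-1/d}$ with high-index representatives, and use the triangle inequality with $|l_j|\le L_n$ to land in the $7$-enlarged strip. Your treatment of the $\delta^{-d'}$ versus $\delta^{-d}$ covering loss (replacing $d$ by some $d'\in(d,1)$ in the definition of $q_n$) is more careful than the paper, which simply asserts a covering by $Cq_n$ balls. You can drop the remark about $j_k\in A$ or $j_k<\overline A$: that case never arises, because $A\in\mathbb A_n$ forces $\overline A\lesssim n^2|\log\varepsilon_n|\ll q_n<j_k$ for small $\varepsilon$. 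Its ``merged coefficient'' justification would also fail as stated, since merging can make the resulting $l$ vanish.
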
 
\begin{proof}
Since $\xi\in\mathcal C(d)$ there exists $C>0$ such that $\{\xi_q\}_{q\geq q_n}$ can be covered with less than $C {q_n}$ balls of size ${q_n}^{-\frac 1d}$. This implies that we can find a subset $\mathcal I\subset \mathbb N \cap\{i\geq q_n\}$ of cardinality less than $C {q_n}$
 such that $\{\xi_q\}_{q\geq q_n}$ is covered  with balls of size $2{q_n}^{-\frac 1d}$ centered at some element in  $\{\xi_i\}_{i\in\mathcal I}$. But then, for any $\tilde\xi\in \Lambda(\xi)$ and any $j\geq q_n$ there exists $i\in \mathcal I$ with 
\[
|\tilde\xi_j-\tilde\xi_i|\leq |\xi_j-\xi_i|+|\xi_j-\tilde\xi_j|+|\xi_i-\tilde\xi_i|\leq (2+2+2)q_{n}^{-\frac 1d}\leq 6\varepsilon_n^{1/12} L_n^{-1}
\]
Hence, if $\tilde\xi\in \mathcal{R}_{A\cup\{j\},l}^{(n)}$, then
\begin{align*}
 |\langle \tilde\xi_{A\cup\{i\}},l\rangle|\leq   |\langle \tilde\xi_{A\cup\{j\}},l\rangle|+|\langle (\tilde\xi_{A\cup\{i\}}-\tilde\xi_{A\cup\{j\}},l\rangle|
\leq \varepsilon_n^{1/12}+ |\tilde\xi_j-\tilde\xi_i| |l_i|
\leq 7 \varepsilon_n^{1/12}.
\end{align*}
Hence $\tilde\xi\in\widehat{\mathcal R}^{(n)}_{A\cup\{i\},l}$.
\end{proof}

In view of the inclusion \eqref{eq:inclusionlag} it follows from the estimate \eqref{eq:measureresonancezonelag} that 
\begin{equation}\label{eq:estimateb2lag}
\begin{split}
\mu (B_n)\leq & \sum_{A\in\mathbb{A}_n}\sum_{\substack{l\in\mathbb{Z}^{|A|+1}\\0<|l|_1\leq L_{n}}} \left(q_n+ \mathrm{card}(\mathcal I)\right) \ell^{-1} (n^2|\log\varepsilon_n|)^{1/d} \varepsilon_n^{1/12} \\
\leq &  \ell^{-1} (n^2|\log\varepsilon_n|)^{1/d} (\varepsilon_n^{1/12} )^{1-d}L_n^{d}\sum_{A\in\mathbb{A}_n} \sum_{\substack{l\in\mathbb{Z}^{|A|+1}\\0<|l|_1\leq L_{n}}} 1\\
\leq &  \ell^{-1} (n^2|\log\varepsilon_n|)^{1/d} (\varepsilon_n^{1/12} )^{1-d}L_n^{d}\sum_{A\in\mathbb{A}_n} L_n^{2|A|}.
\end{split}
\end{equation}
\begin{lem}\label{lem:technicalmeasuretechnicallag}
Let $\mathbb A_n$ and $L_n$ be defined in \eqref{eq:defninductive2} and  \eqref{eq:defninductive3} respectively. Then, 
\[
\sum_{A\in\mathbb{A}_n} L_n^{2|A|+1}\lesssim \exp(|\log\varepsilon_n|^{4/5}).
\]
\end{lem}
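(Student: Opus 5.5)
The plan is to reduce the sum over $\mathbb A_n$ to a counting problem over finite subsets of $\{1,\dots,J_n\}$ with controlled cardinality, where $J_n$ is the largest admissible index. First I would make the definition of $\mathbb A_n$ explicit. Write $\delta_n:=|\log(\varepsilon_{n+1}/\varepsilon_n)|=\tfrac14|\log\varepsilon_n|$. Then $A\in\mathbb A_n$ means $\mu_n\sum_{i\in A}|\log m_i|\leq \delta_n$. Since $\bs m\in\ell^\mathbb R_{\infty,\kappa}$, we may assume $m_i\leq 1$ up to constants, and we have $|\log m_i|\geq \kappa i - C$. Hence every $A\in\mathbb A_n$ satisfies
\[
\kappa\sum_{i\in A} i\;\leq\; \mu_n^{-1}\delta_n + C|A| \;\lesssim\; n^2|\log\varepsilon_n| + C|A|.
\]
If there are indices with $m_i$ close to $1$, I would treat them as a finite bounded set absorbed into the constant, since $m_i\to0$ exponentially.

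From this I would get two bounds. The first concerns cardinality: the distinct indices of $A$ have sum at least $|A|^2/2$, so $|A|\lesssim (n^2|\log\varepsilon_n|)^{1/2}=:K_n$. The second concerns the number of sets: a set $A$ with $\sum_{i\in A} i\leq M$ corresponds to a partition of some integer $\leq M$ into distinct parts. The number of such sets is at most $\sum_{k\le M}p(k)\leq e^{C\sqrt M}$ by the Hardy--Ramanujan bound. With $M\lesssim n^2|\log\varepsilon_n|$, the count of sets in $\mathbb A_n$ is therefore at most $\exp(C n|\log\varepsilon_n|^{1/2})$.

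Combining these, and using $L_n=\sigma_n^{-1}\delta_n\lesssim|\log\varepsilon_n|$ (since $\sigma_n\geq\sigma_0/2$), I would bound
\[
\sum_{A\in\mathbb A_n}L_n^{2|A|+1}\;\leq\; \#\mathbb A_n\cdot L_n^{2K_n+1}\;\lesssim\;\exp\!\Big(Cn|\log\varepsilon_n|^{1/2}+C n|\log\varepsilon_n|^{1/2}\log|\log\varepsilon_n|\Big).
\]
The final step is to compare this with $\exp(|\log\varepsilon_n|^{4/5})$. Because $\varepsilon_n=\varepsilon^{(5/4)^n}$, we have $|\log\varepsilon_n|=(5/4)^n|\log\varepsilon|$, which grows exponentially in $n$. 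Consequently $n\log|\log\varepsilon_n|\lesssim |\log\varepsilon_n|^{3/10}/C$ uniformly in $n$, once $\varepsilon$ is small, since $n\lesssim\log|\log\varepsilon_n|$. This gives the exponent $\leq|\log\varepsilon_n|^{4/5}$.

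The main obstacle I expect is the counting step, i.e.\ making the partition-type bound rigorous. This includes handling the additive constant $C|A|$ coming from $m_i\leq Ce^{-\kappa i}$, which I would absorb using the cardinality bound. It also includes checking that the $n^2$ factor from $\mu_n^{-1}$ is harmless. That factor only contributes $n\sim\log|\log\varepsilon_n|$ to the exponent, and this slack is exactly what the exponent $4/5>1/2$ leaves room for.
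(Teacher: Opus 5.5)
Your proof is correct and is essentially the paper's argument: bound $|A|\lesssim (n^2|\log\varepsilon_n|)^{1/2}$ via $\sum_{i\in A} i\geq |A|^2/2$, bound the number of admissible sets, multiply by the uniform bound on $L_n^{2|A|+1}$, and absorb the factor $n$ coming from $\mu_n^{-1}$ using $n\lesssim\log|\log\varepsilon_n|$. The only difference is the counting step. The paper also bounds the largest index, $\overline A\lesssim n^2|\log\varepsilon_n|$, and counts sets of size $t$ by $\binom{Cn^2|\log\varepsilon_n|}{t}$, whereas you use the distinct-parts partition bound $e^{C\sqrt{M}}$; your count is sharper by a logarithmic factor in the exponent, but this does not matter because the factor $L_n^{2|A|}$ already contributes $\exp\bigl(Cn|\log\varepsilon_n|^{1/2}\log|\log\varepsilon_n|\bigr)$.
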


\begin{proof}
Define
  \[
  K_n=\frac{1}{\mu_n}|\log(\varepsilon_{n+1}/\varepsilon_n)|.
  \]
Thus $K_n\lesssim n^2 \log \varepsilon_n$. Observe that there are constants $R, \bar R$ such that 
$m_A\leq R^{|A|} e^{-\kappa |A|(|A|-1)/2}$
and $m_A\leq \bar R e^{-\kappa \bar A}.$
Hence if $A\in\mathbb A_n$ then $|A|\lesssim \sqrt{K_n}$ and $\overline A \lesssim K_n$. Therefore, 
  \[
  \sum_{ A\in\mathbb A_n} L_n^{2|A|}\lesssim \sum_{t=1}^{ C\sqrt{K_n}} \sum_{|A|=t} L_n^{2t}=
  \sum_{t=1}^{ C \sqrt{K_n}}\binom C K_n{t}L_n^{2t}\lesssim 
  \sum_{t=1}^{ C \sqrt{K_n}}   C^t K_n^t L_n^{2t}\lesssim ( C K_nL_n^2)^{C\sqrt{K_n}}.
  \]
Now the claim follows since the last expresion is less than $\exp|\log \varepsilon_n|^{0.51}$.
\end{proof}

Combining the upper estimate \eqref{eq:estimateb2lag} with the one obtained in Lemma \ref{lem:technicalmeasuretechnicallag} we deduce that
\begin{equation}\label{eq:finalestimateBn}
\mu(B_n)\lesssim \ell^{-1}  \varepsilon_n^{\frac{1}{24}(1-d)}.
\end{equation}
We have thus obtained the following result. Given $H_{\xi}$ as in \eqref{eq:normalformLag} we let $\mathcal O_F=\bigcap_{n} \mathcal O_n$ with $\mathcal O_n$  as in Definition \eqref{defn:diophantinesetLag}  for $\varepsilon_n$ arising from the inductive scheme discussed in Section \ref{sec:Kamlagrangian}.

\begin{thm}\label{thm:measureestimates}
Let $\{\xi_n\}_n\in\mathbb N\subset[a,b]^\mathbb N$ have box dimension $0<d<1$. Let $\Lambda(\xi)$ be defined in \eqref{eq:thickeningclusteredlag}, $\mathcal A$ be the product $\sigma$-algebra on $\Lambda(\xi)$ and $\mu$ the probability measure in $\mathcal A$ defined by \eqref{eq:probmeasurelag}. Suppose that $H_{\xi}$ and $\mathcal O_{F}$ are as above. Then, 
\[
\mu(\mathcal O_F\cap \Lambda(\xi))\geq 1-O(\ell^{-1} \varepsilon ^{\frac{1}{24}(1-d)}).
\]
\end{thm}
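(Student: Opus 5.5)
The proof combines the decomposition \eqref{eq:setBnLag} with the per-step bound \eqref{eq:finalestimateBn} and a union bound over $n$. Since $\mathcal O_F=\bigcap_n\mathcal O_n$ and $\mathcal O_n\cap\Lambda(\xi)=\Lambda(\xi)\setminus\bigcup_{k\le n}B_k(\xi)$, we have
\[
\Lambda(\xi)\setminus \mathcal O_F=\bigcup_{n\ge 0} B_n(\xi).
\]
By countable subadditivity,
\[
\mu\big(\Lambda(\xi)\setminus\mathcal O_F\big)\le\sum_{n\ge0}\mu(B_n(\xi)).
\]
Measurability of each $B_n$ is already settled by the lemma stating that $\mathcal O_F\cap\Lambda(\xi)$ is $\mu$-measurable: the resonant strips are cylinder sets, and each $B_n$ is a countable union of them. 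It therefore suffices to bound $\sum_n\mu(B_n)$ by $O(\ell^{-1}\varepsilon^{\frac1{24}(1-d)})$.

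For each $n$ I would use the chain already set up. Lemma \ref{lem:inclusion} replaces the infinite union over $j>\overline A$ by at most $q_n+\operatorname{card}(\mathcal I)\lesssim q_n$ enlarged strips. Enlarging a strip by the factor $7$ only changes constants in Lemma \ref{lem:measureresonantsetLag}. From \eqref{eq:cutoffclustering} we get $q_n\lesssim (L_n\varepsilon_n^{-1/12})^{d}$. Combining these gives \eqref{eq:estimateb2lag}. Lemma \ref{lem:technicalmeasuretechnicallag} bounds the combinatorial factor $\sum_{A\in\mathbb A_n}L_n^{2|A|+1}$ by $\exp(|\log\varepsilon_n|^{4/5})$. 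The remaining prefactors, $(n^2|\log\varepsilon_n|)^{1/d}$ and $L_n^d\lesssim(n^2|\log\varepsilon_n|/\sigma)^d$, are also subpolynomial in $\varepsilon_n$. Since $n\lesssim\log|\log\varepsilon_n|$, all these factors can be absorbed into half of the power $\varepsilon_n^{\frac1{12}(1-d)}$, which yields \eqref{eq:finalestimateBn}, $\mu(B_n)\lesssim\ell^{-1}\varepsilon_n^{\frac1{24}(1-d)}$. This absorption is the step I expect to be the main obstacle. One must check that $\exp(|\log\varepsilon_n|^{4/5})\cdot\mathrm{poly}(n,|\log\varepsilon_n|)\le\varepsilon_n^{-\frac1{24}(1-d)}$ holds uniformly in $n$, with a constant independent of $n$. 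This is true because $|\log\varepsilon_n|^{4/5}=o(|\log\varepsilon_n|)$ and $|\log\varepsilon_n|\ge|\log\varepsilon|$, but it forces $\varepsilon_0$ to be small depending on $d,\sigma,\kappa$. The small-$n$ terms also need care, since there $\mu_n$ is not small. For those terms I would simply enforce the inequality by shrinking $\varepsilon_0$.

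Finally, since $\varepsilon_n=\varepsilon^{(5/4)^n}$, the series
\[
\sum_n\varepsilon_n^{\frac1{24}(1-d)}=\sum_n \big(\varepsilon^{\frac1{24}(1-d)}\big)^{(5/4)^n}
\]
is dominated by a geometric series with ratio at most $\varepsilon^{\frac1{24}(1-d)\cdot\frac14}<1/2$ once $\varepsilon$ is small. It therefore sums to $O(\varepsilon^{\frac1{24}(1-d)})$. This gives
\[
\mu(\mathcal O_F\cap\Lambda(\xi))\ge1-O\big(\ell^{-1}\varepsilon^{\frac1{24}(1-d)}\big).
\]
Theorem \ref{thm:measestimatesgeneral} should then follow by taking the radii $n^{-1/2d}$ there to dominate $\ell n^{-1/d}$. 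This uses the remark that a smaller box dimension only helps: apply the above with $d$ replaced by the exponent $2d$ (assuming $2d<1$), or, more carefully, redo the covering in Lemma \ref{lem:inclusion} with the radius $q^{-1/2d}$. I would leave this matching of normalizations as a remark.
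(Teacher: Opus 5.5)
Your proposal is correct and follows the paper's own argument. You decompose $\Lambda(\xi)\setminus\mathcal O_F=\bigcup_n B_n(\xi)$, bound each $\mu(B_n)$ via Lemmas \ref{lem:inclusion}, \ref{lem:measureresonantsetLag} and \ref{lem:technicalmeasuretechnicallag} to reach \eqref{eq:finalestimateBn}, and sum over $n$ using $\varepsilon_n=\varepsilon^{(5/4)^n}$, making explicit the summation and the uniform-in-$n$ absorption of logarithmic factors that the paper leaves implicit. Your closing remark on Theorem \ref{thm:measestimatesgeneral} is not needed here, and as sketched it would not work: a bound on a box with smaller radii does not transfer to the larger box, which has $\mu$-measure zero in the infinite product, and replacing $d$ by $2d$ only gives the weaker exponent $\frac1{24}(1-2d)$.
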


\section{Systems of interacting particles: Proof of Theorems \ref{thm:mainLagrangianinformal} and \ref{thm:mainellipticpts}}\label{sec:actionanglenormalform}
In Section \ref{sec:resultsnormalforms} we show how to establish Theorems \ref{thm:mainLagrangianinformal} and \ref{thm:mainellipticpts} using Theorem \ref{thm:mainlagrangianprecise}. Then,  in Section \ref{sec:abundanceKAM} we discuss the abundance of the collection of tori in Theorems \ref{thm:mainLagrangianinformal} and \ref{thm:mainellipticpts}.

\subsection{Proof of Theorem \ref{thm:mainLagrangianinformal}}\label{sec:resultsnormalforms}

In this section we show how to reduce non-degenerate class $\mathcal H$ Hamiltonians as in  \eqref{eq:firstHamnew}-\eqref{eq:firstHamnew2} to the normal form \eqref{eq:normalformLag} to obtain a proof of Theorem \ref{thm:mainLagrangianinformal}. The proof of Theorem \ref{thm:mainellipticpts} is deduced by a rather similar argument; hence, it is left to the reader.

\subsection*{Abstract results in higher dimensions}\label{sec:generalN}

In Section \ref{sec:mainabstract}, we have only presented the abstract theory for $(\varphi,J)\in \mathcal Q_{\rho,\sigma}$. However, as we will see below, the normal form associated to the mechanical Hamiltonian \eqref{eq:firstHam} corresponds to  a real-analytic Hamiltonian $H_\xi:\mathcal Q_{\rho,\sigma}^N\to \mathbb C$.

The exposition in Section \ref{sec:Kamlagrangian} readily generalizes to the general case $N\in\mathbb N$ and the corresponding main results in Theorem \ref{thm:mainlagrangianprecise}  hold unchanged. The measure estimate in Theorem \ref{thm:measestimatesgeneral} also holds unchanged for $\{\tilde \xi_n\}_{n\in\mathbb N}=\{(\tilde \xi_{n,1},\dots,\tilde\xi_{n,N})\}_{n\in\mathbb N}$ with (we interpret $\{\tilde \xi_n\}_{n\in\mathbb N}$ as a subset of $\mathbb R^N$) $\mathrm{dim}_{box}(\tilde\xi)=d\in(0,1)$.

\subsection*{From class-$\mathcal H$ Hamiltonians to normal forms}

We recall that class-$\mathcal H$ Hamiltonians were introduced in Definition \ref{defn:Nbodytype1}. 
\medskip

We recall the splitting \eqref{eq:firstHamnew}  (we relabel $\varepsilon$ as $\epsilon$ since we want to keep the symbol $\varepsilon$ for later\black)
\[
H(x,y)=H_{0}(x,y)+\epsilon H_1(y,x)
\]
where
\[
H_{0}(x,y)=\sum_{i\in\mathbb N}m_i h_i(x_i,y_i/m_i)\qquad\qquad H_1(x)=\sum_{i<j}m_i m_j V_{ij}(x_i,x_j,y_i/m_i,y_j/m_j)
\]
to highlight the perturbative setting. 
We now introduce the conformally symplectic scaling $\phi_{\bs m}:(x,\tilde y)\mapsto (x,y)$ given by 
\[
y_i=m_i \tilde y_i.
\]
In the scaled coordinates $(x,\tilde y)\in \ell_\infty^N\times \ell^N_\infty$ the Hamiltonian reads 
\[
H\circ\phi_{\bs m}(x,\tilde y)=\sum_{i\in\mathbb N} m_i h_{i}(x_i,\tilde y_i)+\epsilon \sum_{i<j} m_im_j \widetilde V_{ij}(x_i,x_j,\tilde y_i,\tilde y_j;m_i,m_j)
\]
 and the symplectic structure is given by
\[
(\phi_{\bs m})^*\left(\sum_{i=0}^\infty \mathrm{d} y_i\wedge\mathrm{d} x_i \right)= \sum_{i=0}^\infty m_i \mathrm{d}\tilde y_i\wedge\mathrm{d} x_i.
\]
Suppose now that $\{h_i\}_{i\in\mathbb N}$ satisfies property \textbf{P1}\black. Then, by definition, there exists $\sigma_0,\rho_0,\gamma>0$ and $\tau\geq N$, a compact set $K\subset\mathbb R^N$, a   Diophantine  frequency vector  $\xi_0^i\in K\subset \mathbb R^N$ of the class $DC(\gamma,\tau)$, \black  invertible matrices $A_i$ and a real-analytic, symplectic transformation (here we use the notation $\mathbb B_\rho\subset\mathbb C^N$ for the complex ball of radius $\rho$ and $\mathbb T_\sigma\subset (\mathbb C/2\pi\mathbb Z)^N$ for the complex torus of width $\sigma$)
\begin{equation}\label{eq:psisect4}
\psi_i:(\varphi_i,J_i)\in \mathbb T_{\sigma_0}\times \mathbb B_{\rho_0}\mapsto (x_i,y_i)\in \mathbb C^N\times\mathbb C^N
\end{equation}
such that 
\[
N_{i}(\varphi_i,I_i):=h_{i}\circ\psi_i(\varphi_i,I_i))=\langle\xi_0^i,I_i\rangle+\frac12 \langle A_iI_i,I_i\rangle+P_i(\varphi_i,I_i)
\]
for some $P_i=O_3(I_i)$.   We now flatten the normal form $N_{i}$ to high order. Indeed, since $\xi_0^i\in DC(\gamma,\tau)$, using a standard argument, one can find $\rho,\sigma>0$ (depending only on $\rho_0,\sigma_0$ and $\gamma,\tau$) and a real-analytic, close to identity  map $\hat\psi_i:\mathbb T_{\sigma}\times \mathbb B_{\rho}\to \mathbb T_{\sigma_0}\times \mathbb B_{\rho_0}$  which coincides with the identity on the torus $\{I=0\}$ and such that 
\begin{equation}\label{eq:bnfkolmogorovtorus}
N_{i}\circ\hat\psi_i(\varphi_i,I_i)=\langle \xi_0^i,I_i\rangle +\hat h_i(I_i)+ \hat P_i(\varphi_i,I_i)
\end{equation}
for
\[
\hat h_i(I_i)=\frac 12 \langle A_iI_i,I_i\rangle+O_{3} (I_i)
\]
and $\partial_{I_i}^k \hat P_i|_{\{I_i=0\}}=0$ for any $0\leq k\leq 100$.  Since $A_i$ is invertible, the map  
\[
I_i\mapsto \xi_0^i+\partial_{I_i} h^{(n)}_i(I_i)
\]
is a diffeomorphism for $I_i\in \mathbb B_{\rho}$ (after shrinking, if necessary, the value of $\rho$)\black. Thus, if we denote by $C_i\subset \mathbb C^N$ the set 
\[
C_i=\xi_0^i+\partial_{I_i} h_i(\mathbb B_{\rho_\epsilon}) \qquad\qquad \text{with }\rho_\epsilon=\epsilon^{1/100},
\]
for $\xi_i\in C_i$ we can consider the inverse map 
\begin{equation}\label{eq:freqmappp}
\xi_i\mapsto I_i(\xi)=A_i^{-1}(\xi_i-\xi_0^i)+O_2(|\xi_i-\xi_0^i|).
\end{equation}
\black and define the change of variables 
\begin{equation}\label{eq:psiomegasect4}
\psi_{\xi_i}:(\varphi_i,J_i)\mapsto (\varphi_i, I_i(\xi)+J_i).
\end{equation}
  We can then introduce the $N$-parametric family of Hamiltonians (parametrized by $\xi_i\in C_i\subset\mathbb C^N$)
\[
N_{\xi_i,i}=N_{i}\circ\hat\psi_i\circ\psi_{\xi_i}(\varphi_i,J_i)=\langle\xi,J_i\rangle+ P_{h,i}(\varphi_i,J_i;\xi_i)+\widetilde P_i(\varphi_i,J_i;\xi_i)\qquad\qquad (\varphi_i,J_i)\in \mathbb T_{\sigma}\times\mathbb B_{\rho_\epsilon}
\]
 with $P_{h,i}$ satisfying $\partial_{I_i} ^k P_{h,i}|_{\{I=0\}}=0$ for $0\leq k\leq 1$ and 
 \[
 \sup_{(\varphi_i,J_i)\in \mathbb T_{\sigma}\times\mathbb B_{\rho_\epsilon}} |\widetilde P_i(\varphi_i,J_i;\xi_i)|\lesssim \epsilon. 
 \]
Finally, we consider any 
 \begin{equation}\label{eq:Cset}
 \xi\in \mathcal C:= \prod_{i\in\mathbb N}C_i
 \end{equation}
 and define the real-analytic change of variables 
 \begin{equation}\label{eq:compossition}
 \Theta_{\xi,\bs m}=\phi_{\bs m}\circ\Psi\circ\hat\Psi\circ\Psi_\xi:(\varphi,J)\in \mathcal Q^N_{\rho_\epsilon,\sigma}\to \ell_\infty^N\times\ell_{\infty,\bs m}^N 
 \end{equation}
where $\Psi,\Psi^{(n)},\Psi_\xi$ are the coordinate transformations which for each $i\in \mathbb N$ act as in \eqref{eq:psisect4}, \eqref{eq:bnfkolmogorovtorus} and \eqref{eq:psiomegasect4}. 
\medskip

We now conclude the proof of Theorem \ref{thm:mainLagrangianinformal} as follows. We fix any $\tilde\xi\in \mathcal C$ with\footnote{ Each $C_i$ contains of a cube of size $\rho_\epsilon=\epsilon^{1/100}$ around $\xi_0^i$. The existence of the desired $\tilde\xi$ is then a trivial consequence of the fact that, by assumption, there exists a compact $K\subset\mathbb R^N$ such that $\{\xi_i\}_{i\in\mathbb N}\subset K$.} 
 \[
    \mathrm{dim}_{\mathrm{box}}(\{\tilde\xi_i\}_{i\in\mathbb N})\in (0,1)
    \]
    and such that the set 
\[
O=\{\xi\in \ell_\infty\colon |\xi-\tilde\xi|_\infty\leq \epsilon^{1/25}\}.
\]
is contained in $\mathcal C$. It is straightforward to check that, for any $\beta\in (0,1)$:  the Hamiltonian $H\circ\Theta_{\xi,\bs m}$ belongs to the Banach space $\mathcal Y_{\beta,\rho_\epsilon,\sigma,O}$ introduced in Section \ref{sec:definitions},  is of the form 
\[
H_\xi(\varphi,J)=\xi\cdot H+P(\varphi,J;\xi)
\]
with $P=P_h+\widetilde P$
\begin{align*}
P_h=&\sum_{i} m_i P_{h,i}  \qquad\qquad 
\widetilde P=\sum_{i} m_i \widetilde P_{i}+\epsilon\sum_{i<j} m_im_j \widehat V_{ij} 
\end{align*}
for $\widehat V_{ij}=\widetilde V_{ij}\circ(\psi_i,\psi_j)\circ(\hat\psi_i,\hat\psi_j)\circ(\psi_{\xi_i},\psi_{\xi_j})$ and the symplectic form is given by $
\sum_i m_i \mathrm{d}J_i\wedge\mathrm{d}\varphi_i$. Note moreover that, for any $\beta<1$
\[
\lVert \widetilde P\rVert_{\beta,\rho,\sigma,\mathcal C}\lesssim \epsilon +\frac{1}{1-\beta}\epsilon=:\varepsilon
\]
Hence, since $\sigma$ is independent of $\epsilon$, $\rho_\epsilon=\epsilon^{1/100}$, and the ``width'' of the parameter domain $O$ is $\epsilon^{1/25}$, 
  the smallness condition \eqref{eq:finalsmallnessconditionlag} needed to apply Theorem \ref{thm:mainlagrangianprecise} is met (provided $\epsilon>0$ is small enough).  Hence, the proof of Theorem \ref{thm:mainLagrangianinformal} follows from a higher dimensional version of Theorem \ref{thm:mainlagrangianprecise}, which, as discussed at the beginning of this section, can be proved in the very same way as Theorem \ref{thm:mainlagrangianprecise}.

\subsection{(Singular) set of KAM tori}\label{sec:abundanceKAM}
In this short section we describe where the KAM set sits inside the phase space\black. As we will make use of notation in Theorem \ref{thm:mainlagrangianprecise}, which is only stated for $N=1$, we only discuss this case in order to avoid introducing extra notation. Let  $\mathcal C$ be as in \eqref{eq:Cset} and let
\[
\Omega:\mathcal C\to \ell_\infty
\]
be the map which acts on each $i\in\mathbb N$ by \eqref{eq:freqmappp} and let $\Phi_{\xi,\bs m}=(\Phi^\varphi_{\xi,\bs m},\Phi^J_{\xi,\bs m})$ be as in \eqref{eq:compossition}. Let $\mathcal O_F\subset \mathcal C$ be the set of frequencies produced by Theorem \ref{thm:mainlagrangianprecise} for the Hamiltonian $H\circ\Phi_{\xi,\bs m}$ constructed above. Let $\Phi$ and $\phi$ be the maps constructed in Theorem \ref{thm:mainlagrangianprecise} for the Hamiltonian $H\circ\Phi_{\xi,\bs m}$.  Define the map 
\begin{align*}
\mathcal F:\mathcal T^\infty\times \mathcal O_F&\to \ell_\infty\times\ell_{\infty,\bs m}\\
(\varphi,\xi)&\mapsto (\Phi^\varphi_{\phi(\xi),\bs m}(\varphi,0), \Omega(\phi(\xi)))
\end{align*}
Then, the KAM set for which Theorem \ref{thm:mainLagrangianinformal} holds is given by 
\[
\mathrm{KAM}=\mathcal F(\mathcal T^\infty\times \mathcal O_F).
\]

\section{Applications to systems of interacting particles} \label{sec:particles}

We now present applications of our theory to systems of mechanical particles: 
given $N\in\mathbb N$, we consider an (a priori formal) Hamiltonian $H:\mathbb R^{2N\mathbb N}\to \mathbb R\cup\{\pm\infty\}$ of $n$-body type
\begin{equation}\label{eq:firstHam1}
H(x,y)=\sum_{i=0}^\infty \frac{|y_i|^2}{2\mu_i}-\sum_{i<j}\mu_i\mu_j V(x_i-x_j)\qquad\qquad (x_i,y_i)\in\mathbb R^N\times\mathbb R^N,
\end{equation}
for some real-analytic potential $V$ and mass vector $\{\mu_i\}_{i=0}^\infty\in\ell_\infty^\mathbb R$. The Hamiltonian \eqref{eq:firstHam1} represents the energy of a system composed of an infinite number of particles moving in $N$-dimensional Euclidean space and interacting through the potential $V$. The coordinates $x_i$ and $y_i$ denote, respectively, the position and momentum of particle $i$.  

In order to view \eqref{eq:firstHam1} as an infinite-dimensional dynamical system, we must select a suitable phase space. Observe first that one expects to encounter a wide variety of dynamical behaviors in different regions of the parameter space (i.e., the masses of the bodies) and/or different choices of phase spaces. In this paper, we restrict our attention to the following setting:
\begin{itemize}
\item \textit{(Planetary regime):} We fix  the masses
\begin{equation}\label{eq:mu}
\mu_0=1,\qquad\qquad \bs\mu:=\{\mu_i\}_{i\in\mathbb N}=\{\varepsilon m_i\}_{i\in\mathbb N},
\end{equation}
with $\bs m\in \ell^\mathbb R_{\infty,\kappa}$ for some $\kappa>0$ (recall that $\ell^\mathbb R_{\infty,\kappa}$ was defined in \eqref{eq:spacedecayingmasses}). Namely, we are interested in the region of the parameter space known in the Celestial Mechanics literature as the \textit{planetary regime}: one heavy body interacts with a collection of much lighter bodies (planets). We will see later that, under this choice of parameters, the Hamiltonian $H$ can be expressed as the sum of two terms: the first being an infinite sum of uncoupled (a priori non-integrable) two-body Hamiltonians, describing the interaction of each body with the heavy one; and the second, a term of smaller order, describing the all-to-all interactions among the light bodies.
\\

\item \textit{(Bounded-energy):} We further focus on the \textit{bounded-energy region}
\[
(x,y)\in\ell_\infty^N\times\ell_{\infty,\bs m}^N,\qquad\qquad 
\ell_{\infty,\bs m}=\{z\in\ell_\infty:\ \sup_{i\in\mathbb N} m_i^{-1}|z_i|<\infty\}.
\]
In physical terms, configurations belonging to this phase space satisfy, for any $i\in\mathbb N$,
\[
\frac{E_i(x_i,y_i)}{m_i}\sim 1,\qquad\qquad 
E_i(x_i,y_i)=\frac{|y_i|^2}{2m_i}-m_iV(|x_i-x_0|),
\]
that is, the ratio between the energy and the mass of each particle is of order one (i.e., the velocities are bounded). Consequently, the total energy of the system is finite. This choice of the phase space is again motivated by Celestial Mechanics. In particular, in this regime we will construct orbits in which all the light particles rotate at a uniform distance around the heavy one (as in our solar system).
\end{itemize}

In this setting, the long range nature of the problem already manifests itself at the level of the equations of motion. Indeed, it follows from Newton's equations associated with the Hamiltonian \eqref{eq:firstHam1} that, for any $i\in\mathbb N$,
\begin{equation}\label{eq:newton}
\mu_i \ddot x_i=\sum_{i\neq j}\mu_i\mu_j \nabla_{x_i} V(x_i,x_j),
\end{equation}
so the mass $\mu_i$ cancels on both sides. Because of the all-to-all interaction, all particles except the first one experience accelerations of comparable magnitude.\footnote{Which imply magnitude of order $m_i$ on the momentum of particle $i$, and this is macroscopic given the normalization that we adopt.} Thus, these systems constitute natural candidates for testing our abstract results presented above. To do so, we introduce the following class of Hamiltonians.

\begin{defn}\label{defn:Nbodytype}
Let $N\in\mathbb N$ and $\bs m\in\ell^\mathbb R_{\infty,\kappa}$ for some $\kappa>0$. We say that 
\[
H(x,y):\ell_{\infty}^N\times\ell_{\infty,\bs m}^N\to\mathbb C
\]
is a \textit{mechanical Hamiltonian} if it is of the form
\begin{equation}\label{eq:firstHam}
H(x,y)=\sum_{i=1}^\infty  \left(\frac{|y_i|^2}{2m_i}-m_i V(x_i)\right)
+\varepsilon\sum_{i<j} m_im_j V_{i,j}(x_i,x_j,y_i/m_i,y_j/m_j)
\end{equation}
and there exists $K>0$ such that $V\in C^\omega((-K,K)^{N},\mathbb R)$ and 
$V_{i,j}\in C^\omega((-K,K)^{4N},\mathbb R)$ are  uniformly  bounded on some compact complex neighbourhood of $(-K,K)^N$ or $(-K,K)^{4N}$.
\end{defn}

The definition above calls for a couple of remarks:
  \begin{itemize}
  \item In the planetary regime, after introducing heliocentric coordinates (see Lemma  \ref{lem:heliocentric}), systems of the form \eqref{eq:firstHam1} recast as mechanical Hamiltonians as in \eqref{eq:firstHam} with 
   \begin{equation}\label{eq:potentialheoliocentric}
   V_{i,j}= \frac{\langle y_i,y_j\rangle}{m_im_j}-V(x_i-x_j).
   \end{equation}
   \item Another physically relevant example is given by Hamiltonians
   \begin{equation}\label{eq:background}
H(x,y)=\sum_{i=1}^\infty  \left(\frac{|y_i|^2}{2m_i}-m_i V(x_i)\right)
-\varepsilon\sum_{i<j} m_im_j V_{i,j}(x_i,x_j),
\end{equation}
which model the motion of infinitely many particles moving in a \textit{background potential} $V$ and which interact via couplings of the form $m_im_jV_{i,j}$. 

\item Mechanical Hamiltonians as in Definition \ref{defn:Nbodytype} can be seen as a subset of the class-$\mathcal H$ Hamiltonians which were introduced in Definition \ref{defn:Nbodytype1}.
\end{itemize}

\begin{lem}\label{lem:heliocentric}
Let $N\in\mathbb N$, let $\bs m\in\ell^\mathbb R_{\infty,\kappa}$ for some $\kappa>0$, and let $H:(x,y)\in \ell_\infty^2\times\ell_{\infty,\bs m}^2\to \mathbb R$ be as \eqref{eq:firstHam1} (where $\bs \mu$ is as in \eqref{eq:mu}).  There exists a holomorphic (conformally) symplectic map $\phi_{\mathrm{hel}}:(x,y)\in \ell_\infty^2\times\ell_{\infty,\bs m}^2 \mapsto (X,Y)\in \ell_\infty^2\times\ell_{\infty,\bs m}^2$ such that after rescaling time by the constant $\varepsilon>0$ conjugates $H$ to a mechanical Hamiltonian \eqref{eq:firstHam} with $\{V_{i,j}\}_{i,j}$ as in \eqref{eq:potentialheoliocentric}.
\end{lem}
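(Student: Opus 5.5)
We will use the classical heliocentric change of variables, adapted to infinitely many bodies, and then rescale momenta and time. Set
\[
X_0=x_0,\qquad X_i=x_i-x_0\ (i\geq 1),\qquad Y_0=\sum_{j\geq 0} y_j,\qquad Y_i=y_i\ (i\geq 1).
\]
This map is linear in $(x,y)$. Its symplecticity follows from the generating function $S(x,Y)=\langle Y_0,x_0\rangle+\sum_{i\geq1}\langle Y_i,x_i-x_0\rangle$, which gives $y_0=Y_0-\sum_{i\geq1}Y_i$, $y_i=Y_i$ and $X$ as above. The first point to verify is that the map is a well-defined bounded operator on $\ell_\infty^2\times\ell_{\infty,\bs m}^2$ (with index $0$ included). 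For the positions this is immediate, since $|x_i-x_0|\leq 2|x|_\infty$. For the momenta, $y\in\ell_{\infty,\bs m}$ means $|y_j|\leq C m_j$, and $\bs m\in\ell^\mathbb R_{\infty,\kappa}$ implies $\sum_j m_j<\infty$. Hence $Y_0=\sum_j y_j$ converges absolutely and $|Y_0|\lesssim |y|_{\infty,\bs m}$. The inverse is bounded for the same reason, so the map is holomorphic (linear and bounded) with bounded inverse.

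Next we substitute into \eqref{eq:firstHam1}, with $\mu_0=1$ and $\mu_i=\varepsilon m_i$. The kinetic energy becomes
\[
\tfrac12\Big|Y_0-\sum_{i\geq1}Y_i\Big|^2+\sum_{i\geq1}\frac{|Y_i|^2}{2\varepsilon m_i}.
\]
The quantity $Y_0$ is conserved, because $X_0$ is cyclic: $V(x_i-x_j)=V(X_i-X_j)$ and $V(x_0-x_i)=V(-X_i)$. We restrict to the invariant level $Y_0=0$, i.e.\ total momentum zero, and drop $X_0$, which is the standard reduction. The kinetic energy then reads
\[
\sum_{i\geq1}\frac{|Y_i|^2}{2\varepsilon m_i}+\tfrac12\sum_{i}|Y_i|^2+\sum_{i<j}\langle Y_i,Y_j\rangle,
\]
and the potential is $-\sum_i\varepsilon m_iV(X_i)-\sum_{1\leq i<j}\varepsilon^2m_im_jV(X_i-X_j)$. (We take $V$ even or absorb the sign; this is harmless.)

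For the scaling we set $Y_i=\varepsilon\widetilde Y_i$. This is conformally symplectic with factor $\varepsilon$, so dividing the Hamiltonian by $\varepsilon^2$ and rescaling time by $\varepsilon$ gives the correct equations. In these variables, $\widetilde Y\in\ell_{\infty,\bs m}$ and the new Hamiltonian is
\[
\sum_{i\geq1}\Big(\frac{|\widetilde Y_i|^2}{2m_i}-m_iV(X_i)\Big)+\frac{\varepsilon}{2}\sum_i|\widetilde Y_i|^2+\varepsilon\sum_{i<j}m_im_j\Big(\frac{\langle\widetilde Y_i,\widetilde Y_j\rangle}{m_im_j}-V(X_i-X_j)\Big).
\]
This has the form \eqref{eq:firstHam} with $V_{i,j}$ as in \eqref{eq:potentialheoliocentric}.

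The diagonal term $\tfrac{\varepsilon}{2}|\widetilde Y_i|^2$ is not part of \eqref{eq:firstHam}. We absorb it into the one-body part by replacing $1/(2m_i)$ with $1/(2m_i)+\varepsilon/2$, i.e.\ by a reduced mass $m_i/(1+\varepsilon m_i)$. This is a harmless $O(\varepsilon)$ modification of $h_i$; alternatively it can be kept as a one-body $O(\varepsilon m_i)$ correction. Finally we check the bounds required in Definition~\ref{defn:Nbodytype}. Writing the interaction term as $m_im_jV_{i,j}(X_i,X_j,\widetilde Y_i/m_i,\widetilde Y_j/m_j)$ gives $V_{i,j}=\langle u_i,u_j\rangle-V(X_i-X_j)$ with $u_i=\widetilde Y_i/m_i$ bounded. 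Hence the $V_{i,j}$ are real-analytic and uniformly bounded on a uniform complex neighbourhood.

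We expect the main obstacle to be this functional-analytic bookkeeping rather than the algebra. One must show that the infinite sum $Y_0$ and the double sum $\sum_{i<j}\langle Y_i,Y_j\rangle$ converge on the phase space; this holds since $|Y_i||Y_j|\lesssim m_im_j$ and $\bs m$ is summable. One must also justify that the reduction to $Y_0=0$ is legitimate in infinite dimensions, which requires checking that the flow preserves $Y_0$ via $\partial_{X_0}H=0$.
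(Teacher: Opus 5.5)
Your argument is correct and follows the paper's route: heliocentric coordinates $X_i=x_i-x_0$, $Y_0=\sum_j y_j$, $Y_i=y_i$, combined with rescaling momenta and time by $\varepsilon$. It is in fact more careful than the paper's sketch, which addresses none of the following: convergence of $Y_0$ on $\ell_{\infty,\bs m}$, the restriction to the invariant level $Y_0=0$, and the reduced-mass term $\frac{\varepsilon}{2}|\widetilde Y_i|^2$ (its final formula does not display this term). One bookkeeping slip: after $Y=\varepsilon\widetilde Y$ you should divide the Hamiltonian by $\varepsilon$ only (the conformal factor, equivalently the time rescaling by $\varepsilon$), since dividing by $\varepsilon^2$ would leave a factor $\varepsilon^{-1}$ in front of the one-body part rather than giving the expression you display.
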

Although this a very well-known fact (see for instance \cite{AKN}), we present the proof here for the sake of self-completeness.
\begin{proof}
     After rescaling the momentum vector and time by $\varepsilon$ we obtain that
\[
H(x,y)=H_{\mathrm{\mathrm{mech}}}(x,y)-\varepsilon F(x)
\]
where
\[
H_{\mathrm{\mathrm{mech}}}(x,y)=\frac{|y_0|^2}{2\varepsilon}+\sum_{i\in\mathbb N}\left(\frac{|y_i|^2}{2m_i}-m_i V(|x_i-x_0|)\right)\qquad\qquad F(x)=\varepsilon\sum_{i<j}m_im_j V_{ij}(x_i,x_j)
\]
so the perturbative setting becomes apparent. Introduce now the  change of coordinates $\phi_{\mathrm{hel}}:(x,y)\in \ell_\infty^N\times\ell_{\infty,\bs m}^N \mapsto (X,Y)\in \ell_\infty^N\times\ell_{\infty,\bs m}^N $ given by
\begin{equation}\label{eq:heliocentric}
X_i=x_i-x_0\qquad\qquad Y_0=\sum_{i=0}^\infty y_i\qquad\qquad Y_i= y_i.
\end{equation}
It is then a trivial computation to check that in   \textit{heliocentric coordinates} $(X,Y)$, (after a time rescaling)
\[
H\circ \phi_{\mathrm{hel}}(X,Y)=\sum_{i\in\mathbb N} h_{\mathrm{\mathrm{mech}}}(X_i,Y_i)+\varepsilon\sum_{j<k} \langle Y_j,Y_k\rangle -\varepsilon\sum_{i<j}m_im_j V_ij(X_i,X_j)
\]
and
\[
(\phi_{\mathrm{hel}})^*\left( \sum_{i=0}^\infty \mathrm{d}Y_i\wedge\mathrm{d} X_i\right)=\sum_{i=0}^\infty \mathrm{d}y_i\wedge\mathrm{d} x_i.
\]\qedhere
\end{proof}

\subsection{Main results: applications to mechanical systems}
We now present our main results in the context of mechanical systems of interacting particles. These boil down to particular cases of Theorems \ref{thm:mainLagrangianinformal}, \ref{thm:mainellipticpts} under suitable non-degeneracy conditions on 
\begin{equation}\label{eq:keplerintro}
    h_{\mathrm{mech}}(x,y)=\frac{|y|^2}{2}-V(x)
\end{equation}
\subsection*{Full-dimensional tori}
The following result follows trivially as a particular case of Theorem \ref{thm:mainLagrangianinformal}.
\begin{thm}[Full-dimensional KAM tori for interacting particles]\label{thm:mainLagrangianinformalparticles}
Let $N\in\mathbb N$, let $\bs m\in\ell^\mathbb R_{\infty,\kappa}$ for some $\kappa>0$, and let $H:\ell_\infty^N\times\ell_{\infty,\bs m}^N\to\mathbb C$ be a mechanical Hamiltonian with $h_{\mathrm{mech}}$ satisfying \textbf{P1}. Then there exists a constant $\varepsilon_0(V,\kappa)>0$ such that for any $0\leq \varepsilon\leq \varepsilon_0(V,\kappa)$ there exists an uncountable collection of real-analytic full-dimensional tori near the infinite dimensional torus given by \textbf{P1} for $\varepsilon=0$, invariant under the flow of $H$, on which the motion is conjugated to a linear translation on $\mathbb T^{N\mathbb N}$.

\end{thm}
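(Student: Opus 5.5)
The plan is to verify that a mechanical Hamiltonian \eqref{eq:firstHam} is a class-$\mathcal H$ Hamiltonian in the sense of Definition \ref{defn:Nbodytype1}, with the $h_i$ satisfying \textbf{P1}, and then to invoke Theorem \ref{thm:mainLagrangianinformal}. The first step is to set $h_i:=h_{\mathrm{mech}}$ for every $i\in\mathbb N$, with $h_{\mathrm{mech}}(x,y)=\frac{|y|^2}{2}-V(x)$ as in \eqref{eq:keplerintro}. Then
\[
m_ih_i(x_i,y_i/m_i)=\frac{|y_i|^2}{2m_i}-m_iV(x_i),
\]
so the uncoupled part of \eqref{eq:firstHam} is exactly $H_0$ in \eqref{eq:firstHamnew2}. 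The coupling term $\varepsilon\sum_{i<j}m_im_jV_{i,j}(x_i,x_j,y_i/m_i,y_j/m_j)$ is literally $\varepsilon H_1$. The required uniform analyticity and boundedness follow directly from Definition \ref{defn:Nbodytype}. This holds for $V_{i,j}$ by assumption, and for $h_i$ because $V$ is analytic and bounded on a complex neighbourhood of $(-K,K)^N$ and $|y|^2/2$ is bounded on any bounded complex neighbourhood.

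The second step is to check \textbf{P1} uniformly in $i$. Since all $h_i$ coincide, the data required in \textbf{P1} can be taken independent of $i$. These data are the neighbourhood $M_i$, the symplectic coordinates $(\varphi,J)$, the Diophantine vector $\xi_0^i=\xi_0\in DC(\gamma,\tau)$, the matrix $A_i=A$ and the remainder $P_i$. Uniformity is then automatic: the compact set $K$ reduces to $\{\xi_0\}$, and $\sup_i\lVert A_i^{-1}\rVert=\lVert A^{-1}\rVert<\infty$. The hypothesis that $h_{\mathrm{mech}}$ satisfies \textbf{P1} is thus exactly the hypothesis needed for the whole family $\{h_i\}$.

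The third step applies Theorem \ref{thm:mainLagrangianinformal}, which produces $\varepsilon_0(\{h_i\},\{V_{ij}\},\kappa)$. Because $h_i=h_{\mathrm{mech}}$ depends only on $V$, and the $V_{i,j}$ enter only through their uniform bound on the complex neighbourhood, this constant can be written as $\varepsilon_0(V,\kappa)$. For $0\le\varepsilon\le\varepsilon_0$ we then obtain uncountably many real-analytic full-dimensional invariant tori with linear flow on $\mathbb T^{N\mathbb N}$. These tori lie in a neighbourhood of $\mathcal T_0$, the product of the \textbf{P1} tori, which gives the localization in the statement.

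If the Hamiltonian arises instead from the planetary system \eqref{eq:firstHam1}, one first applies Lemma \ref{lem:heliocentric} to reach the form \eqref{eq:firstHam}, with $V_{i,j}$ as in \eqref{eq:potentialheoliocentric}. The only point I would check carefully is that these $V_{i,j}$ are uniformly bounded on the relevant complex neighbourhood. This holds because $\langle y_i,y_j\rangle/(m_im_j)$ is a bounded function of the scaled momenta $y_i/m_i$ and $y_j/m_j$. I expect no real obstacle here; the whole substance of the argument is contained in Theorem \ref{thm:mainLagrangianinformal}.
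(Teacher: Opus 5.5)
Your proposal is correct and is exactly the paper's argument. The paper observes that a mechanical Hamiltonian \eqref{eq:firstHam} is a class-$\mathcal H$ Hamiltonian with $h_i=h_{\mathrm{mech}}$ for every $i$, so \textbf{P1} holds uniformly with $i$-independent data, and it deduces the result as a particular case of Theorem~\ref{thm:mainLagrangianinformal}; your closing heliocentric remark is not needed, since the statement already assumes the form \eqref{eq:firstHam}.
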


 A particularly relevant case of this setting corresponds to $V=V(|x|)$. In this case,
\[    h_{\mathrm{mech}}(x,y)=\frac{|y|^2}{2}-V(|x|)
\]
is integrable (integrability follows from rotational invariance), and its phase space contains large regions foliated by (full) $N$-dimensional invariant tori on which the motion is conjugated to a linear translation on the torus. Moreover, under an explicit non-degeneracy condition on $V$,   for $N=1$ (motion on a line) or $N=2$ (motion on the plane)\black, the map assigning to each  torus its corresponding frequency vector is a local diffeomorphism so property \textbf{P1} holds for all tori with Diophantine frequency (see Section \ref{SSCentral} for a detailed analysis of the integrable case).

\medskip

\subsection*{KAM annuli around elliptic fixed points}
The following result constitutes a particular case of Theorem \ref{thm:mainellipticpts}.

\begin{thm}[KAM annuli for interacting particles]\label{thm:mainellipticptsparticles}
Let $N\in\mathbb N$, let $\bs m\in\ell^\mathbb R_{\infty,\kappa}$ for some $\kappa>0$, and let $H:\ell_\infty^N\times\ell_{\infty,\bs m}^N\to\mathbb C$ be a mechanical Hamiltonian with $h_{\mathrm{mech}}$ satisfying property \textbf{P2}. Then there exists a constant $\varepsilon_0(V,\kappa)>0$ such that for any $0<\varepsilon\leq \varepsilon_0(V,\kappa)$ there exists an uncountable collection of full-dimensional tori, invariant under the flow of $H$, on which the motion is conjugated to a linear translation on $\mathbb T^{N\mathbb N}$. These tori are localized around the   elliptic fixed point  given by \textbf{P2} for $\varepsilon=0$.\black
\end{thm}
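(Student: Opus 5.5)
The plan is to realize Theorem~\ref{thm:mainellipticptsparticles} as a special case of Theorem~\ref{thm:mainellipticpts}. To do so we check that a mechanical Hamiltonian with $h_{\mathrm{mech}}$ satisfying \textbf{P2} is a class-$\mathcal H$ Hamiltonian whose family $\{h_i\}$ satisfies \textbf{P2} uniformly in $i$. Following the third remark after Definition~\ref{defn:Nbodytype}, we set
\[
h_i(x_i,\tilde y_i)=\frac{|\tilde y_i|^2}{2}-V(x_i),
\]
so that $m_ih_i(x_i,y_i/m_i)=\frac{|y_i|^2}{2m_i}-m_iV(x_i)$. The coupling $m_im_jV_{i,j}(x_i,x_j,y_i/m_i,y_j/m_j)$ is already in the form \eqref{eq:firstHamnew2}. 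The uniform boundedness of $V$ and $V_{i,j}$ on a fixed complex neighbourhood of $(-K,K)^N$, respectively $(-K,K)^{4N}$, is exactly the analyticity hypothesis of Definition~\ref{defn:Nbodytype1}. In the planetary case, Lemma~\ref{lem:heliocentric} first brings \eqref{eq:firstHam1} into this form, with $V_{i,j}$ given by \eqref{eq:potentialheoliocentric}. On a bounded region the term $\langle \tilde y_i,\tilde y_j\rangle$ is uniformly bounded, so the same hypothesis holds there.

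Next we verify \textbf{P2} for the whole family. Since $h_i=h_{\mathrm{mech}}$ for every $i$, the Birkhoff coordinates $(z,\bar z)$, the frequency vector $\omega_0^i=\omega_0$, the twist matrix $A_i=A$ and the radius $\rho$ supplied by \textbf{P2} for $h_{\mathrm{mech}}$ can be used simultaneously for all $i$. Uniformity is then automatic. The compact set $K$ reduces to the single point $\{\omega_0\}$, the non-resonance constant $\gamma$ is common to all $i$, and $\sup_i\lVert A_i^{-1}\rVert=\lVert A^{-1}\rVert<\infty$. The conclusion, namely uncountably many full-dimensional invariant tori in a small annulus around $\mathcal E_0$ carrying linear flow on $\mathbb T^{N\mathbb N}$, then follows verbatim from Theorem~\ref{thm:mainellipticpts}, with $\varepsilon_0$ depending only on $V$, the bounds on $V_{i,j}$, and $\kappa$.

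The step needing the most care is the proof of Theorem~\ref{thm:mainellipticpts} itself, which the paper leaves to the reader. The reduction parallels the one given for Theorem~\ref{thm:mainLagrangianinformal} in Section~\ref{sec:actionanglenormalform}. First we perform a Birkhoff normal form of order four for each $h_i$, uniform in $i$; this is possible thanks to the non-resonance condition up to order $100$. We then pass to action-angle variables $z_i=\sqrt{I_i}e^{\mathrm i\varphi_i}$ in an annulus $\{|I_i|\sim\delta\}$, with $\delta$ a small power of $\epsilon$. The frequency map $I_i\mapsto\omega_0+AI_i$ is then invertible with uniformly bounded inverse, and we introduce the parameters $\xi_i$ as in \eqref{eq:psiomegasect4}.

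The delicate point is arranging the scales so that three conditions hold at once. The remaining perturbation, consisting of the Birkhoff remainder of size $O(\delta^{5/2})$ together with the $\epsilon$-coupling, must have $\mathcal Y_{\beta,\rho,\sigma,O}$-norm $\varepsilon$ satisfying \eqref{eq:finalsmallnessconditionlag}. This has to be checked with $\rho\sim\delta$ and $\sigma\sim1$ after rescaling the actions, and with a parameter box of width $\sim\delta$. Finally, a base frequency $\tilde\xi$ of box dimension $d\in(0,1)$ must fit inside the image of the annulus. This last point is guaranteed by the same footnote argument as in Section~\ref{sec:actionanglenormalform}, which applies since all $\omega_0^i$ lie in a compact set.
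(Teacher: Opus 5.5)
Your reduction of the statement to Theorem~\ref{thm:mainellipticpts} is exactly what the paper does: a mechanical Hamiltonian is class-$\mathcal H$ with $h_i=h_{\mathrm{mech}}$ for every $i$, so \textbf{P2} holds uniformly for free. Your outline of Theorem~\ref{thm:mainellipticpts} also follows the route of Section~\ref{sec:actionanglenormalform} that the paper leaves to the reader: Birkhoff normal form, action--angle variables on an annulus, frequencies as parameters, then Theorem~\ref{thm:mainlagrangianprecise} and the measure estimate.

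However, the quantitative step you call delicate fails with the normal form you chose. After a Birkhoff normal form of order four, the remainder on the annulus $\{|I_i|\sim\delta\}$ is $O(|z|^5)=O(\delta^{5/2})$. Its $J$-independent and $J$-linear angle-dependent parts cannot go into $P_h$. Hence $\varepsilon\gtrsim\delta^{5/2}$, or $\varepsilon\gtrsim\delta^{3/2}$ if you rescale $J=\delta\tilde J$ and divide $H$ by $\delta$. Meanwhile the parameter box $\omega_0+AI^*$ with $|I^*|\sim\delta$ has width $|b-a|\sim\delta$, and this width is unchanged by the rescaling, since $\xi\cdot J/\delta=\xi\cdot\tilde J$. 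But \eqref{eq:finalsmallnessconditionlag} requires $\varepsilon\le|b-a|^{24}\sim\delta^{24}$. Likewise, Theorem~\ref{thm:measureestimates} gives a nonempty set only if $\ell^{-1}\varepsilon^{(1-d)/24}\ll 1$ with $\ell\lesssim\delta$. Since $\delta^{5/2}\gg\delta^{24}$ for small $\delta$, the three conditions you list cannot hold simultaneously, whatever power of $\epsilon$ you take for $\delta$. Without rescaling, even $\varepsilon\le\rho^6\sigma^6$ with $\rho\sim\delta$ fails.

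The missing ingredient is to use the full strength of \textbf{P2}. The non-resonance $|\omega_0\cdot k|\ge\gamma$ for $0<|k|_1\le 100$ is there precisely so that you can Birkhoff-normalize up to degree $100$, uniformly in $i$. This leaves a remainder $O(|z|^{101})=O(\delta^{50})$, the exact analogue of the flattening to order $100$ in \eqref{eq:bnfkolmogorovtorus} for the Lagrangian case. Then take $\delta$ a small power of $\epsilon$, e.g.\ $\delta=\epsilon^{1/100}$, mirroring $\rho_\epsilon$ there. This gives $\varepsilon\lesssim\delta^{50}$, which satisfies all three requirements:
\begin{itemize}
\item $\varepsilon\le\rho^6\sigma^6$ with $\rho\sim\delta$ and $\sigma\sim1$;
\item $\varepsilon\le|b-a|^{24}$;
\item $\delta^{-1}\varepsilon^{(1-d)/24}\ll1$, as long as the base frequency $\tilde\xi$ is chosen with box dimension $d\le 1/2$, say.
\end{itemize}
With this change, the rest of your argument goes through.
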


\subsection{The integrable case}
\label{SSCentral}

The purpose of this section is to give a rather descriptive picture of how our theory fits into the integrable case $V=V(|x|)$. In particular we  recall that, under explicit conditions on $V$, the corresponding (integrable) Hamiltonian $h_{\mathrm{mech}}(x,y)=\frac{|y|^2}{2}-V(|x|)$ displays a foliation of (a large part of) its phase space by non-degenerate invariant tori. The section is organized as follows:
\begin{itemize}
    \item In Section \ref{secA1A2}, we show that under a suitable non-degeneracy assumption on $V$ (see Assumptions \textbf{A.1} and \textbf{A.2}) most of the phase space of $h_{\mathrm{mech}}$ is foliated by $N$-dimensional non-degenerate invariant tori. Moreover, around any of these tori we construct a local, real-analytic, change of coordinates which brings $h_{\mathrm{mech}}$ into the parametric normal form \eqref{eq:normalformP1}
   
\item In Section \ref{sec:examples}, we illustrate the discussion with some figures drawn from the concrete example of the Duffing Oscillator.
\end{itemize}
In both Sections \ref{secA1A2} and \ref{sec:examples}, we  will distinguish between the cases $N=1$ and $N=2$.
\vspace{0.2cm}

\subsubsection{Non-degenerate integrability} $ \ $ \label{secA1A2}
\black

\noindent\textbf{Case $N=1$:}
We consider the one degree-of-freedom mechanical Hamiltonian 
\begin{equation}\label{eq:keplern1}
h_{\mathrm{mech}}(r,y)=\frac{y^2}{2}-V(r)\qquad\qquad (r,y)\in\mathbb R_+\times\mathbb R.
\end{equation}

\begin{defn}[Assumption \textbf{A.1}]\label{defn:nondegeneracyN1}
We say that a potential $V$ satisfies assumption \textbf{A.1} if there exists a non-empty interval $(a,b)\subset\mathbb R$ such that for all $h\in (a,b)$:
\begin{itemize}
    \item (Compactness:)  the energy levels $\{h_{\mathrm{mech}}=h\}$ are smooth closed curves, 
    \item (Non-degeneracy:) the action map $h\mapsto L$ 
    \[
    L(h)=\frac{1}{2\pi} \int_{\{h_{\mathrm{mech}}(y,r)=h\}}y\mathrm{d}r
    \]
    admits an inverse map $L\mapsto h(L)$ which satisfies 
    \[
    \partial^2_{L^2} h\neq 0.
    \]
\end{itemize}
\end{defn}

The non-degeneracy condition in Definition \ref{defn:nondegeneracyN1} guarantees that the energy levels $\{h_{\mathrm{mech}}=h\}$ for $h\in (a,b)$ are periodic orbits and that, moreover, the period of these orbits is a non-trivial function of the energy.

\vspace{0.2cm}

\noindent\textbf{Case $N=2$:} In this case, after introducing polar coordinates ($\mathcal R$, the radial momentum, stands for the variable conjugated to $r$ and $\mathcal G$, the angular momentum, is the variable conjugated to $\alpha$)
\begin{equation}\label{eq:changepolar}
x=(r\cos\alpha,r\sin\alpha)\qquad\qquad y=(\mathcal R\cos\alpha-\frac{\mathcal G}{r}\sin\alpha,\mathcal R\sin\alpha+\frac{\mathcal G}{r}\cos\alpha),
\end{equation}
the Hamiltonian $h=\frac{|y|^2}{2}-V(|x|)$ recasts as 
\begin{equation}\label{eq:uncoupledpolar}
\mathtt h_{\mathrm{mech}}(r,\mathcal R;\mathcal G)=\frac{\mathcal R^2}{2}+\frac {\mathcal G^2}{2r^2}-V(r)\qquad\qquad (r,\mathcal R)\in\mathbb R_+\times\mathbb R.
\end{equation}
Notice that the angle $\alpha\in\mathbb T$ is a cyclic variable so the angular momentum $\mathcal G$ is a conserved quantity.

\begin{defn}[Assumption \textbf{A.2}]\label{defn:nondegeneracy}
We say that a real-analytic potential $V:(0,\infty)\to\mathbb R$ satisfies assumption \textbf{A.2} if there exists non-empty intervals $(a,b)\subset\mathbb R$ and $(g_1,g_2)\subset\mathbb R$ such that for any $(h,\mathcal G)\in (a,b)\times(g_1,g_2)$:
\begin{itemize}
    \item (Compactness:)  the level curves $\{(r,\mathcal R)\colon \mathtt h_{\mathrm{mech}}(r,\mathcal R;\mathcal G)=h\}$ are closed curves.
\item (Non-degeneracy:) the action map $(h,\mathcal G)\mapsto (L,\mathcal G)$ where  \[
L(h,\mathcal G)=\frac{1}{2\pi}\int_{\{\mathtt h_{\mathrm{mech}}(r,\mathcal R;\mathcal G)=h\}} \mathcal R\mathrm{d}r
\]
admits a local inverse $\Theta:(L,\mathcal G)\mapsto (h,\mathcal G)$ and $\hat h(L,\mathcal G)= \pi_h\Theta(L,\mathcal G)$ is such that $(L,\mathcal G)\mapsto \nabla \hat h(L,\mathcal G)$ is a local diffeomorphism.
\end{itemize}
\end{defn}
As in the case $N=1$,  the non-degeneracy condition in Definition \ref{defn:nondegeneracy} guarantees the existence of a local foliation by  full-dimensional invariant tori  on which, as we will see below,  the frequency map is non-degenerate (so these tori might be parametrized in terms of their  frequency vector).  We refer to these tori as \textit{Lagrangian}.

 We now introduce a local coordinate system  in the neighbourhood of any Lagrangian tori which brings $h_{\mathrm{mech}}$ into action-angle normal form with non-vanishing twist as in \eqref{eq:normalformP1}. We only deal with the case $N=2$, the case $N=1$ being similar.  We define the effective potential 
 \[
 V_{eff}(r;\mathcal G)=\frac {\mathcal G^2}{2r^2}-V(r)
 \]
 and introduce the generating function
\[
W(\alpha,r,\mathcal G,L)=\alpha \mathcal G+\int_a^r \sqrt{\hat h(L,\mathcal G)-V_{eff}(s;\mathcal G)}\mathrm{d}s
\]
where $a(L,\mathcal G)$ is the largest solution to $V_{eff}(r;\mathcal G)=-\hat h(L,\mathcal G)$. Hence, the canonical change of variables $(\varphi_1,L,\varphi_2,\mathcal G)\mapsto (r,\mathcal R,\alpha,\mathcal G)$ given by 
\begin{equation}\label{eq:actionangle}
\varphi_1=\partial_{L} W\qquad\qquad \varphi_2=\partial_{\mathcal G} W\qquad\qquad \mathcal G=\partial_\alpha W\qquad\qquad \mathcal R=\partial_r W
\end{equation}
converts the Hamiltonian $\mathtt h_{\mathrm{mech}}$ into the Hamiltonian 
\[
\mathtt H(L,\mathcal G)=\hat h(L,\mathcal G).
\]
  Note that Assumption \textbf{A.2} implies that the map $(L,\mathcal G)\mapsto \nabla \hat h(L,\mathcal G)$ is a local diffeomorphism. In particular, there exist $(L_*,\mathcal G_*)\in (a,b)\times (g_1,g_2)$ such that the vector 
\[
\xi_*=\nabla \hat h(L_*,\mathcal G_*)
\]
is Diophantine.  Introducing the change of variables 
\[
\psi:(J_1,J_2)\mapsto (L_*+J_1,\mathcal G_*+J_2)
\]
we obtain 
\[
\mathtt H\circ \psi(J)=\langle \xi_*,J\rangle +P(J)
\]
 with $\partial_{J}P|_{\{J=0\}}=0$.
 
 In conclusion, we have shown the following that allows to apply Theorem \ref{thm:mainLagrangianinformal}
 to these classes of integrable mechanical systems. 
 
\begin{prop}\label{lem:uncoupledchangevarsLag}
Let $N=1$ (resp. $N=2$) and suppose that that $V$ satisfies \textbf{A.1} (resp. \textbf{A.2}).  Then, $h_{\mathrm{mech}}(x,y)=\frac{|y|^2}{2}-V(|x|)$ satisfies \textbf{P1} for all Diophantine frequencies.
\end{prop}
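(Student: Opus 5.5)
The plan is to check the items of \textbf{P1} one at a time for the single finite-dimensional Hamiltonian $h_{\mathrm{mech}}$; this is legitimate because all $h_i$ coincide. Uniformity in $i$ is then automatic: the constants $\rho,\sigma,\gamma,\tau$, the compact set $K$ and the bound $\sup_i\lVert A_i^{-1}\rVert$ can all be taken independent of $i$.

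\textbf{Step 1: action-angle coordinates.} By \textbf{A.1} (resp.\ \textbf{A.2}), the level sets are compact closed curves in $(a,b)$ (resp.\ $(a,b)\times(g_1,g_2)$). The Liouville--Arnold construction via the generating function $W$ in \eqref{eq:actionangle} therefore gives real-analytic symplectic action-angle coordinates $(\varphi,L)$, resp.\ $(\varphi_1,\varphi_2,L,\mathcal G)$, on a domain where $h_{\mathrm{mech}}=\hat h(L)$ (resp.\ $\hat h(L,\mathcal G)$).

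\textbf{Step 2: complex extension.} I would show that these coordinates extend holomorphically to a complex strip $\mathbb T^N_\sigma\times B_\rho$ around any compact subset of the action domain. For this, $V$ is real-analytic, and the turning point $a(L,\mathcal G)$ is a simple root of $\hat h-V_{\mathrm{eff}}$, because the closed level curves are regular. Consequently the period integrals and $W$ depend analytically on $(L,\mathcal G)$. This is the step I expect to be the main obstacle: the integrand $\sqrt{\hat h-V_{\mathrm{eff}}}$ is singular at the turning point. I would handle it with the standard substitution $s=a+(r_{\max}-a)\sin^2\theta$, which makes the integral analytic in the parameters. Alternatively, one can simply quote the analytic Liouville--Arnold theorem.

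\textbf{Step 3: normal form at a Diophantine torus.} Fix any action $J_*$ (i.e.\ $L_*$ or $(L_*,\mathcal G_*)$) whose frequency $\xi_0=\nabla\hat h(J_*)$ lies in $DC(\gamma,\tau)$ with $\tau\ge N$. Taylor-expanding $\hat h(J_*+J)$ gives
\[
\hat h(J_*)+\langle\xi_0,J\rangle+\tfrac12\langle A J,J\rangle+O_3(J),\qquad A=D^2\hat h(J_*).
\]
This is exactly \eqref{eq:normalformP1} once the irrelevant constant is dropped, with $P=O_3(J)$ independent of $\varphi$. In particular $P$ satisfies $\partial^n_J P|_{J=0}=0$ for $n\le2$.

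\textbf{Step 4: invertibility of $A$.} For $N=1$, $A=\partial^2_{L^2}h\ne0$ by \textbf{A.1}. For $N=2$, $A$ is the Jacobian of $\nabla\hat h$, which is invertible because $\nabla\hat h$ is a local diffeomorphism by \textbf{A.2}. Choosing $J_*$ in a compact subset of the action domain makes $\lVert A^{-1}\rVert$ bounded and places $\xi_0$ in a compact $K$.

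Finally, since $\nabla\hat h$ is a local diffeomorphism, every Diophantine frequency in its image is realized by some torus. This gives \textbf{P1} at all Diophantine frequencies, as claimed.
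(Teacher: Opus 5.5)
Your proposal is correct and follows essentially the same route as the paper: action-angle coordinates from the generating function $W$, a shift of the actions to a torus whose frequency $\nabla\hat h(J_*)$ is Diophantine (such a torus exists because $\nabla\hat h$ is a local diffeomorphism), and a Taylor expansion in $J$. You also make explicit several points the paper leaves implicit: the analytic extension near the turning points, the identification $A=D^2\hat h(J_*)$ with its invertibility coming from \textbf{A.1}/\textbf{A.2}, and the uniformity of the constants over a compact set of actions.
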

\begin{figure}
\begin{center}
 \includegraphics[scale=0.45]{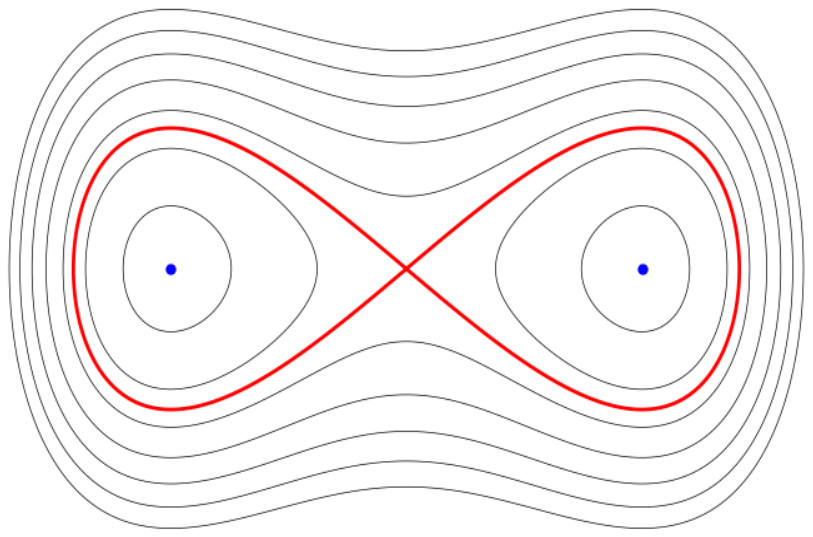}
   \caption{The Duffing Hamiltonian for $N=1$. The phase space is decomposed into five parts: An open unbounded component foliated by closed curves, a figure-eight separatrix loop (in red), two open bounded components foliated by closed curves and two elliptic fixed points (in blue).}
\end{center}
\end{figure}

\black
\subsubsection{Example: The Duffing oscillator}\label{sec:examples}

The Duffing oscillator corresponds to the potential 
\[
V(r)=\alpha r^2+\beta r^4.
\]
Below we consider the case $\alpha=1$ and $\beta=-1$.
\vspace{0.2cm}

\noindent\textbf{One-dimensional model $(N=1)$:}
The integrable Keplerian Hamiltonian \eqref{eq:keplern1} which governs the uncoupled dynamics reads
\[
h_{\mathrm{mech}}(r,y)=\frac{y^2}{2}-r^2+r^4.
\]
In the figure below we depict the phase space of this Hamiltonian. We identify three different open regions (two bounded and one unbounded) in which there exists a foliation by closed curves. The two bounded regions correspond to negative values of the energy while the unbounded one corresponds to positive energies. The three regions are separated by a figure eight homoclinic loop. The open unbounded region $M_2$ admits a foliation by two-dimensional invariant tori.  In any of these three regions, the potential $V(r)=r^2-r^4$ satisfies \textbf{A.1}, hence Proposition \ref{lem:uncoupledchangevarsLag} implies that $h_{\mathrm{mech}}$ satisfies \textbf{P1} for all  the invariant curves with Diophantine frequencies. Hence we can apply Theorem \ref{thm:mainellipticptsparticles} 
 to weak couplings of infinitely many of these Duffing Oscillators with exponentially decaying masses. 
\black 

\vspace{0.2cm}

\noindent\textbf{Two-dimensional model $(N=2)$:} The integrable Keplerian Hamiltonian \eqref{eq:uncoupledpolar}  which governs the uncoupled dynamics reads
\[
h_{\mathrm{mech}}(r,\mathcal R;\mathcal G)=\frac{\mathcal R^2}{2}+\frac{\mathcal G^2}{2r^2}-r^2+r^4.
\]

\noindent In the figure below we depict the phase space of this Hamiltonian. In polar variables the phase space is the direct product of the half plane $(r,\mathcal R)\in \mathbb R_+\times\mathbb R$ times the cylinder $(\alpha,\mathcal G)\in\mathbb T\times\mathbb R$.  Define the family of elliptic periodic orbits
\[
\gamma(\mathcal G)=\{r=r_*(\mathcal G),\mathcal R=0, \alpha \in \mathbb T\}
\]
where $r_*(\mathcal G)$ is the unique minimum of the effective potential contained in $\{r\geq 0\}$. Then, we can decompose the phase space $M=\{(r,\mathcal R,\alpha, \mathcal G)\in \mathbb R_+\times\mathbb R\times \mathbb T\times\mathbb R\}$ as 
\[
M=M_1\cup M_2\qquad\qquad\text{where}\qquad\qquad M_1=\bigcup_{\mathcal G\in\mathbb R}\gamma(\mathcal G)\quad\quad M_2=M\setminus M_1.
\]
The open unbounded region $M_2$ admits a foliation by two-dimensional invariant tori. The potential $V(r)=r^2-r^4$ satisfies \textbf{A.2}, hence Proposition \ref{lem:uncoupledchangevarsLag} implies that $h_{\mathrm{mech}}$ satisfies \textbf{P1} for all  the two-dimensional invariant tori with Diophantine frequencies. Hence we can apply Theorem \ref{thm:mainellipticptsparticles}
 to weak couplings of infinitely many of these Duffing Oscillators with exponentially decaying masses. 
\black 

\begin{figure}
\begin{center}
   \includegraphics[scale=0.4]{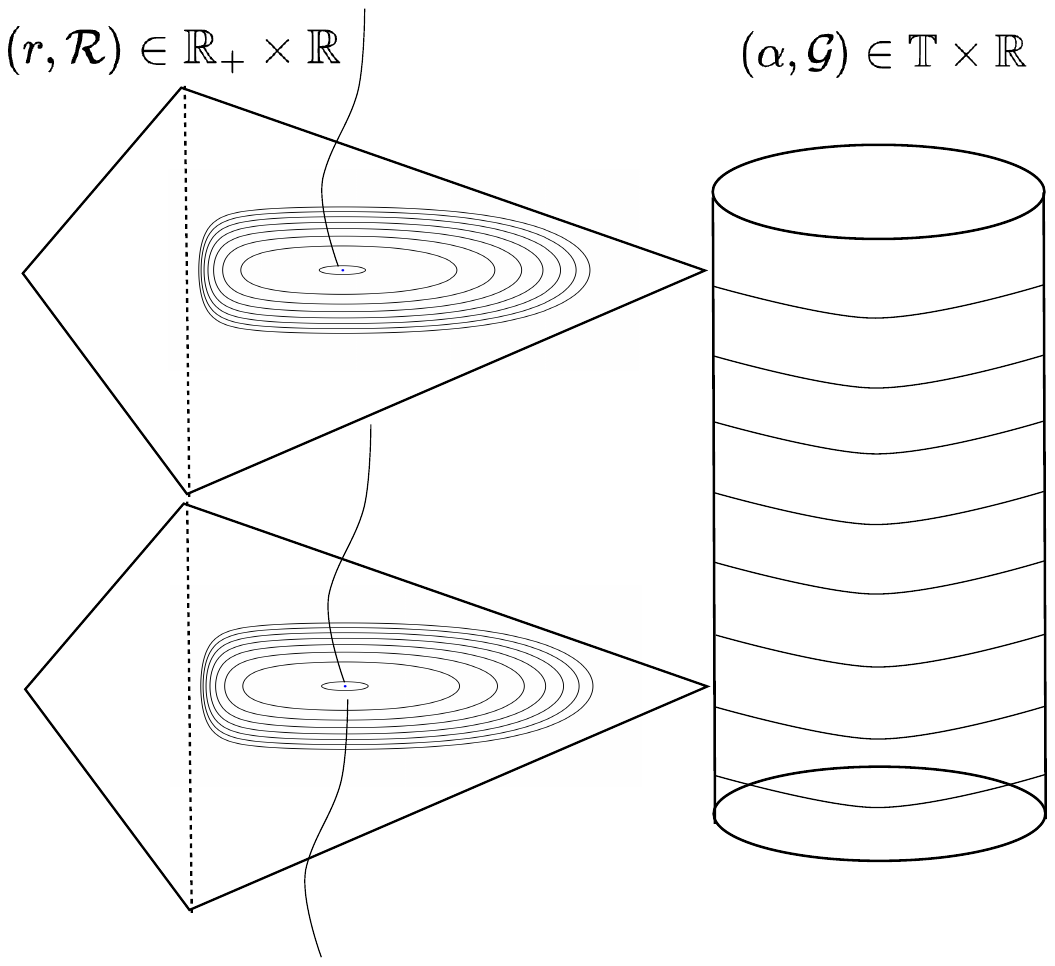}
\caption{The Duffing Hamiltonian on the plane ($N=2$). In the picture we have attached two different copies of the half plane $\mathbb R_+\times\mathbb R$  for different values of $\mathcal G$. On any of these copies the projection of the orbits can be decomposed into an unbounded open set foliated by closed curves and an elliptic fixed point. In the full phase space, the product of any of these closed orbits times a height level set of the cylinder yields a two-dimensional torus. The product of the elliptic fixed point times a height level set of the cylinder yields an elliptic periodic orbit.}
\end{center}
\end{figure}


\appendix

\section{Holomorphic mappings between Banach spaces}\label{sec:proofBanach}
Let $E,F$ be complex Banach spaces and let $U$ be an open subset of $E$. We say that a map $\Phi:U\to F$ is holomorphic if $\Phi$ is $\mathbb C$-differentiable. That is, for every $a\in U$ there exists a continuous linear map $A\in L(E,F)$ such that, for any small $v\in E$ 
\[
\Phi(a+v)-\Phi(a)-Av=o(\lVert v\rVert).
\]
We refer the interested reader to \cite{MujicaComplexBan} for a comprehensive introduction to the theory of holomorphic mappings between Banach spaces. For our purposes it will also be convenient to introduce the normed vector space
\begin{equation}\label{eq:Banachspacehol}
\mathcal H(U,F)=\{\Phi:U\to F\colon  \Phi \text{ is holomorphic and }\sup_{a\in U}\lVert \Phi(a)\rVert_F<\infty\}.
\end{equation}

\begin{lem}\label{lem:banachspaceholom}
The normed vector space $\mathcal H (U,F)$ is complete. Namely, it is a Banach space.
\end{lem}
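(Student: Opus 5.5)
The plan is the standard Cauchy-sequence argument, combined with the fact that uniform limits of holomorphic maps between Banach spaces are holomorphic. Completeness of $F$ produces a pointwise limit, and uniformity of the convergence handles boundedness. The only non-trivial point is holomorphy of the limit. I would obtain it through the characterisation of holomorphy via Cauchy integral estimates, as in \cite{MujicaComplexBan}.

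First, let $\{\Phi_k\}$ be Cauchy in $\mathcal H(U,F)$ for the sup norm. For each $a\in U$ the sequence $\Phi_k(a)$ is Cauchy in $F$, so it converges to some $\Phi(a)$. The convergence is uniform on $U$ by the usual $\epsilon/2$ argument. Hence $\sup_U\lVert\Phi\rVert_F\le \sup_k \sup_U \lVert \Phi_k\rVert_F<\infty$, and $\lVert\Phi_k-\Phi\rVert\to 0$ in the sup norm. In particular $\Phi$ is continuous, being a uniform limit of continuous maps.

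Second, I would show that $\Phi$ is holomorphic. I would use the known equivalence from \cite{MujicaComplexBan}: a map is holomorphic iff it is continuous and G\^ateaux holomorphic, i.e.\ for every $a\in U$ and $v\in E$ the map $\zeta\mapsto \Phi(a+\zeta v)$ is holomorphic on a small disc in $\mathbb C$. For fixed $a,v$, the $F$-valued one-variable functions $\zeta\mapsto\Phi_k(a+\zeta v)$ converge uniformly. The vector-valued Cauchy integral formula passes to uniform limits, since Bochner integrals commute with uniform limits. Thus the limit satisfies the Cauchy formula and is holomorphic in $\zeta$. Combined with continuity, this gives holomorphy of $\Phi$.

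Alternatively, one can avoid quoting the equivalence by working directly. The Cauchy estimates give, on a ball $B(a,2r)\subset U$, the bound $\lVert D\Phi_k(b)-D\Phi_l(b)\rVert\le r^{-1}\sup_U\lVert\Phi_k-\Phi_l\rVert$ for $b\in B(a,r)$. So $D\Phi_k$ converges uniformly to some $A(b)\in L(E,F)$. The second-order Cauchy estimate gives a uniform bound $\lVert\Phi_k(a+v)-\Phi_k(a)-D\Phi_k(a)v\rVert\le C\lVert v\rVert^2$ with $C$ independent of $k$. Letting $k\to\infty$ shows $\Phi$ is $\mathbb C$-differentiable at $a$ with derivative $A(a)$.

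The main obstacle is this holomorphy step, specifically getting estimates uniform in $k$. That uniformity is exactly what the Cauchy inequalities supply, using $\sup_k\sup_U\lVert\Phi_k\rVert<\infty$. One technical caveat is that the paper's definition takes the sup over $\overline U$. This is harmless, since by continuity the sup over $U$ equals the sup over $\overline U$ wherever the map extends.
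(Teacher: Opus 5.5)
Your proof is correct. The paper states this lemma without proof, treating it as a standard fact about holomorphic maps between Banach spaces and pointing to \cite{MujicaComplexBan}; your first route (uniform limit, then continuity plus G\^ateaux holomorphy of the limit) is exactly that standard argument.

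Your second route via Cauchy estimates is a self-contained alternative. It works directly with the paper's definition of holomorphy as Fr\'echet $\mathbb C$-differentiability, and it does not depend on the cited equivalence theorem. Your remark that the $\sup_{\overline U}$ in the definition of Section~\ref{sec:definitions} versus $\sup_U$ in the appendix makes no difference is also right.
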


\subsection*{Banach manifolds}
A Banach manifold is a set $M$ together with an (equivalence class) of atlas modeled on a Banach space (see, for instance,  \cite{DiffmfoldsLang}). A Banach manifold modeled in a Banach space $E$ is holomorphic if, for every two local charts $(U_i,\varphi_i)$ and $(U_j,\varphi_j)$ in its atlas, the map
\[
\varphi_i\circ\varphi_j^{-1}: \varphi_j(U_i\cap U_j)\subset E\to \varphi_i(U_i\cap U_j)\subset E
\]
is a holomorphic mapping. The following is a simple exercise which we reproduce for the sake of self-completeness.
\begin{lem}\label{lem:banachmanifold}
Let $\mathcal T_{\infty}=\ell_\infty/\sim$ where $\theta\sim\theta'$ if $\theta-\theta'\in\mathbb Z^{\mathbb N}$. Then
\begin{equation}\label{eq:bigphasespace}
\mathcal Q= \mathcal T_{\infty}\times\ell_\infty
\end{equation}
is a holomorphic Banach manifold.
\end{lem}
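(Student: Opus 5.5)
The plan is to realize $\mathcal Q=\mathcal T_\infty\times\ell_\infty$ as a quotient of the Banach space $E=\ell_\infty\times\ell_\infty$ (complexified as needed) by the additive action of the discrete subgroup $\Gamma=\mathbb Z^{\mathbb N}\times\{0\}$. Local inverses of the quotient map will serve as charts. First I will check that $\Gamma$ acts properly discontinuously in a uniform way. If $k\in\mathbb Z^{\mathbb N}\setminus\{0\}$, then $|k|_\infty\geq 1$. Hence for any $\theta\in\ell_\infty$ the ball $U_\theta=\{\theta'\colon|\theta'-\theta|_\infty<1/2\}$ satisfies $(U_\theta+k)\cap U_\theta=\emptyset$ for all $k\neq 0$. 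Consequently the projection $\pi:\ell_\infty\to\mathcal T_\infty$ is injective on every such ball. I equip $\mathcal T_\infty$ with the quotient topology and note that $\pi$ is open, since $\pi^{-1}(\pi(V))=\bigcup_k(V+k)$ is open for $V$ open. It follows that $\pi|_{U_\theta}$ is a homeomorphism onto its image.

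Next I define the atlas. For each point $p=(\pi(\theta),J)\in\mathcal Q$, the chart domain is $W_p=\pi(U_\theta)\times\ell_\infty$ and the chart map is $\varphi_p=(\pi|_{U_\theta})^{-1}\times\mathrm{id}:W_p\to U_\theta\times\ell_\infty\subset E$. The Hausdorff property follows from the same separation estimate. Suppose $\pi(\theta)\neq\pi(\theta')$. Then $\delta=\inf_k|\theta-\theta'-k|_\infty$ is positive, because the infimum is attained up to choosing the components $k_n$ as nearest integers, and it is nonzero unless $\theta-\theta'\in\mathbb Z^{\mathbb N}$. The images of balls of radius $\delta/2$ around $\theta$ and $\theta'$ are therefore disjoint.

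The key step, and the only one with content, is the holomorphy of the transition maps. Let $x\in\varphi_q(W_p\cap W_q)$. There is then some $k\in\mathbb Z^{\mathbb N}$ with $x+k\in U_\theta$, and on a neighbourhood of $x$ the transition $\varphi_p\circ\varphi_q^{-1}$ is the translation $(x,J)\mapsto(x+k,J)$. The subtle point is that $k$ must be locally constant. To see this, suppose two nearby points $x,x'$ within distance $<1/2$ had shifts $k\neq k'$. Then $x+k$ and $x'+k'$ would both lie in $U_\theta$, so $|k-k'|_\infty<1/2+1/2+1/2$. Shrinking the neighbourhood to radius $\ll 1$ improves this to $|k-k'|_\infty<1$, which forces $k=k'$. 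Since translations are affine and continuous on $E$, they are holomorphic in the sense of the appendix, with derivative equal to the identity. The transition maps are therefore holomorphic.

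I expect the main obstacle to be the bookkeeping in this last step. Infinitely many coordinates can shift simultaneously, so local constancy has to be argued with the sup norm rather than coordinatewise. The uniform gap $|k|_\infty\geq 1$ is exactly what makes the argument work.
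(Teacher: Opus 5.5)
Your route is the same as the paper's. Charts are local inverses of the projection $\ell_\infty\to\mathcal T_\infty$ on sup-norm balls of radius at most $\tfrac12$; the paper takes $\rho<\tfrac12$, treats $\mathcal T_\infty$ alone and invokes products. Transition maps are translations by integer vectors. You are more careful than the paper: you check openness of $\pi$ and the Hausdorff property, and you notice that the integer shift is only \emph{locally} constant, which the paper's ``the transition map is a translation'' glosses over.

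However, your argument for that local constancy, the step you single out as the only one with content, does not work as written. From $x+k,\,x'+k'\in U_\theta$ and $|x-x'|_\infty<\eta$, the triangle inequality gives only $|k-k'|_\infty<\tfrac12+\tfrac12+\eta=1+\eta$. Shrinking $\eta$ does not touch the two terms coming from the size of $U_\theta$, so you never get below $1$. In fact no uniform radius can work. Take $\theta=0$ and $\theta'=(\tfrac12,0,0,\dots)$, and consider $x=(\tfrac12-\delta,0,0,\dots)$ and $x'=(\tfrac12+\delta,0,0,\dots)$. Both lie in $\varphi_q(W_p\cap W_q)$ at distance $2\delta$, with shifts $k=0$ and $k'=-\mathbbm{1}_1$. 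So $|k-k'|_\infty=1$ for every $\delta\in(0,\tfrac12)$.

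The fix is to let the radius depend on the point. If $x$ has shift $k$, put $r=\tfrac12-|x+k-\theta|_\infty>0$. For $x'$ in the overlap with $|x'-x|_\infty<r$, one has $x'+k\in U_\theta$. Since also $x'+k'\in U_\theta$, injectivity of $\pi$ on $U_\theta$ (your first step) forces $k'=k$. Equivalently, your estimate becomes $|k-k'|_\infty<(\tfrac12-r)+\tfrac12+r=1$. With this repair the rest of your argument is fine: openness of $\pi$, Hausdorffness via $\inf_k|d-k|_\infty=\sup_n\mathrm{dist}(d_n,\mathbb Z)$, and holomorphy of translations.

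A minor point: the group acting on $\ell_\infty$ is $\mathbb Z^{\mathbb N}\cap\ell_\infty$, not $\mathbb Z^{\mathbb N}$, since translating by an unbounded integer sequence leaves $\ell_\infty$. Every $k$ that actually arises is a difference of elements of $\ell_\infty$, so nothing else changes.
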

\begin{proof}
    As products of Banach manifolds are Banach manifolds, it is enough to check that $\mathcal T_{\infty}$ is a Banach manifold for which the transition maps between different local charts are real-analytic. Let $[\theta]\in\mathcal T_{\infty}=\{\tilde\theta\in\ell_\infty\colon \tilde\theta=\theta+k,\ k\in\mathbb Z\}$. Let $\rho<1/2$ and define  the local chart $(U_\rho([\theta]),\varphi_{[\theta]})$ with
    \[
    U_\rho([\theta]):=\{[\theta']\in\mathcal T_{\infty}\colon \inf_{\tilde\theta'\in[\theta']}|\theta-\tilde\theta'|_\infty<\rho\}
    \]
    together with the bijective map $ \varphi_{[\theta]} :U_\rho([\theta])\to \{|\tilde\theta|_\infty<\rho\}$ defined by
    \[
   \varphi^{-1}_{[\theta]}(\tilde\theta)=[\theta+\tilde\theta].
    \]
    But then, given any $[\lambda]\in\mathcal T_{\infty}$, for which $U_\rho([\theta])\cap U_{\rho'}([\lambda])\neq\emptyset$ we have that the transition map
$\varphi_{[\lambda]}\circ\varphi_{[\theta]}^{-1}:U_\rho([\theta])\cap U_{\rho'}([\lambda])\to \varphi_{[\lambda]}\left(U_\rho([\theta])\cap U_{\rho'}([\lambda])\right)$ is a translation (by a bounded vector depending on $\lambda,\theta$) so the proof follows.
\end{proof}

Next, we recall a technical lemma which was used in Section \ref{sec:Kamlagrangian}.
\begin{lem}[Lemma D.1. in \cite{PoschelSpatialstructure}]\label{lem:inverse}
    Let $O_h\subset\ell_\infty$ denote an open neighbourhood of radius $h$ of some real subset. Suppose that $f:O_h\subset\ell_\infty\to \ell_\infty$ is real-analytic. If 
     \[
     |f-\mathrm{id}|_\infty\leq\delta\leq  h/4
     \]
     then, $f$ has a real-analytic inverse $\varphi$ on $O_{h/4}$. Moreover, $|\varphi-\mathrm{id}|_\infty\leq \delta$ on this domain.
\end{lem}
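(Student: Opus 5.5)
The statement to prove is Lemma~\ref{lem:inverse}: a real-analytic $f$ on $O_h$ with $|f-\mathrm{id}|_\infty\le\delta\le h/4$ has a real-analytic inverse $\varphi$ on $O_{h/4}$ with $|\varphi-\mathrm{id}|_\infty\le\delta$. The plan is to obtain $\varphi$ as the fixed point of a contraction. Write $f=\mathrm{id}+g$ with $g:O_h\to\ell_\infty$ real-analytic and $|g|_\infty\le\delta$. For $y\in O_{h/4}$, solving $f(x)=y$ is the same as finding a fixed point of
\[
T_y(x)=y-g(x).
\]

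The first step is a derivative bound from the Cauchy estimates in the Banach space $\ell_\infty$. Let $x\in O_{h/2}$ and $v\in\ell_\infty$ with $|v|_\infty=1$. The disc $\{x+zv:|z|\le h/2\}$ lies in $O_h$, because $O_h$ is the $h$-neighbourhood of a real set and $x$ is within $h/2$ of that set. The one-variable Cauchy formula applied to $z\mapsto g(x+zv)$ gives $|Dg(x)v|_\infty\le 2\delta/h\le 1/2$. Since $O_{h/2}$ is a union of balls, the mean value inequality along segments lying in a common ball makes $g$ Lipschitz with constant $1/2$ on each such ball. I expect this localisation to be the main technical point, since $O_h$ need not be convex and the contraction has to be set up on balls rather than on the whole domain.

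Next comes the mapping property. Fix $y\in O_{h/4}$ and a real point $y_0$ with $|y-y_0|<h/4$. Let $\overline B$ be the closed ball of radius $\delta$ around $y$. Then $\overline B\subset B(y_0,h/2)\subset O_{h/2}$, using $\delta\le h/4$ (a slight shrinking handles boundary cases). For $x\in\overline B$ we have $|T_y(x)-y|_\infty=|g(x)|_\infty\le\delta$, so $T_y$ maps $\overline B$ to itself, and $T_y$ is a $1/2$-contraction there. Banach's fixed point theorem therefore gives a unique $\varphi(y)\in\overline B$ with $f(\varphi(y))=y$, and $|\varphi(y)-y|_\infty\le\delta$.

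For regularity, $\varphi$ is the uniform limit on $O_{h/4}$ of the iterates $\varphi_0=\mathrm{id}$, $\varphi_{k+1}=\mathrm{id}-g\circ\varphi_k$. Each iterate is holomorphic and maps into the appropriate balls, so the uniform limit is holomorphic by completeness of $\mathcal H(O_{h/4},\ell_\infty)$ (Lemma~\ref{lem:banachspaceholom}). Alternatively, since $\|Dg\|\le 1/2$, $Df=I+Dg$ is invertible by a Neumann series, and the holomorphic inverse function theorem applies. Because $g$ is real on real points, every iterate is real there, so $\varphi$ is real-analytic. Finally, $\varphi$ is a genuine two-sided inverse near each point: $\varphi\circ f=\mathrm{id}$ follows from uniqueness of the fixed point in $\overline B$ wherever $f(x)\in O_{h/4}$.
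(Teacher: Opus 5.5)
Your proof is correct. The paper gives no proof of this lemma and only cites Pöschel's Lemma D.1; your argument is the standard proof behind that result: the Cauchy estimate $\|Dg\|\le 2\delta/h\le 1/2$ on $O_{h/2}$, followed by the contraction $x\mapsto y-g(x)$ on $\overline B_\delta(y)$. Two cosmetic points: no shrinking is needed, since $|y-y_0|<h/4$ strictly already gives $\overline B_\delta(y)\subset B(y_0,h/2)$; and completeness of $\mathcal H(O_{h/4},\ell_\infty)$ should be applied to the bounded maps $\varphi_k-\mathrm{id}$ rather than to $\varphi_k$ themselves, because $\mathrm{id}$ need not be bounded when the underlying real set is unbounded.
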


\bibliographystyle{alpha}
\bibliography{biblioMelnikov}

\end{document}